\newcommand{\N}{\mathbb N}
\newcommand{\R}{\mathbb R}
\theoremstyle{definition}
\newtheorem{defin}{Definition}[section]
\newtheorem{lem}[defin]{Lemma}
\newtheorem{exam}[defin]{Example}
\newtheorem{teo}[defin]{Theorem}
\newtheorem{cor}[defin]{Corollary}
\newtheorem{obs}[defin]{Observation}
\newtheorem{question}[defin]{Question}
\newcommand{\eval}[2][\right]{\relax
	\ifx#1\right\relax \left.\fi#2#1\rvert}
\begin{document}

	\title[Oscillations]{Ramsey Property and Block Oscillation Stability on Normalized Sequences in Banach Spaces}	
	
	\author[S. Garcia-Ferreira]{S. Garcia-Ferreira}	
	\address{Centro de Ciencias Matem\'aticas, Universidad Nacional
Aut\'onoma de M\'exico, Apartado Postal 61-3, Santa Mar\'{\i}a,
58089, Morelia, Michoac\'an, M\'exico}
	\email{sgarcia@matmor.unam.mx}
	
	\author[A. C. Hernandez-Soto]{A. C. Hernandez-Soto}
	\address{Centro de Ciencias Matem\'aticas, Universidad Nacional
Aut\'onoma de M\'exico, Apartado Postal 61-3, Santa Mar\'{\i}a,
58089, Morelia, Michoac\'an, M\'exico}
	\email{caren.hdez@gmail.com}
	
	\thanks{ Research of the first-named author was supported by the PAPIIT grant no. IN-105318. }
	
	\subjclass[2010]{Primary 05C55, 05D10, 46B15; Secondary 05D99, 46B06, 46B99}
	
	\date{}
	
	\commby{}
	
	\keywords{Schauder basis, Spreading model, Barrier, Ramsey property, Block oscillation stable sequence, Block asymptotic model}
	
	\begin{abstract} 
	A well-known application of the Ramsey Theorem in the Banach Space Theory is the proof of the fact that every normalized basic sequence has a subsequence which generates a spreading model (the Brunel-Sucheston Theorem). Based on this application, as an intermediate step, we can talk about the notion of $(k,\varepsilon)-$oscillation stable sequence, which will be described and analyzed more generally in this article. Indeed, we introduce the notion $((\mathcal{B}_i)_{i=1}^k,\varepsilon)-$block oscillation stable sequence where $(\mathcal{B}_i)_{i=1}^k$ is a finite sequence of barriers and using what we will call blocks of barriers. In particular, we prove that the Ramsey Theorem is equivalent to the statement ``for every finite sequence $(\mathcal{B}_i)_{i=1}^k$ of barriers, every $\varepsilon>0$ and every normalized sequence $(x_i)_{i\in\N}$ there is a subsequence $(x_i)_{i\in M}$ that is  $((\mathcal{B}_i\cap\mathcal{P}(M))_{i=1}^k,\varepsilon)-$block oscillation stable'', where $\mathcal{P}(M)$ is the power set of the infinite set M.
	Besides, we introduce the $(\mathcal{B}_i)_{i\in\N}-$block asymptotic model of a normalized basic sequence where $(\mathcal{B}_i)_{i\in\N}$ is a sequence of barriers. These models are a generalization of the spreading models and are related to the $((\mathcal{B}_i)_{i=1}^k,\varepsilon)-$block oscillation stable sequences. We show that the Brunel-Sucheston is satisfied for the $(\mathcal{B}_i)_{i\in\N}-$block asymptotic models, and we also prove that this result is equivalent to the Ramsey Theorem. The difference between our theorem and the Brunel-Sucheston Theorem is based on the number of different models that are obtained from the same normalized basic sequence through them. This and other observations about $(\mathcal{B}_i)_{i\in\N}-$block asymptotic models are noted in an example at the end of the article.
	\end{abstract}
	
	\maketitle

%%%%%%%%%%%%%%%%%	
\section{Introduction and Preliminaries}
%%%%%%%%%%%%%%%%%

The Ramsey Theorem \cite{ra} is a powerful tool on many fields of Mathematics. In particular, this combinatorial principle has had a strong impact on the study of Banach spaces in the last few decades (c.f. e.g., \cite{asym}, \cite{k}, \cite{Od}, \cite{Od2}). For example, J. Farahat uses a strong version of Ramsey's Theorem \cite{Fa} to prove the famous Rosenthal Theorem on $\ell_1$ \cite{Rose1}. Another remarkable application is due to A. Brunel and L. Sucheston, who showed that every normalized basic sequence in a Banach space has a subsequence that generates a spreading model \cite{bs}. Th. Schlumprecht presented a beautiful proof of this fact using the theorem known as Ramsey's Theorem for Analysts in \cite{Schlu}. This proof motivated the introduction of the notion of $(k,\varepsilon)-$oscillation stable sequence in the paper \cite{cg}. In this last unpublished article, it is observed that the Ramsey Theorem is equivalent to a statement which involves the property of $(k,\varepsilon)-$oscillation stability. Moreover, this new notion allows to prove that the Brunel-Sucheston Theorem implies the Ramsey Theorem.

\medskip
In this work, the Banach spaces that we will consider are those that have a Schauder basis, which we will also refer to as a normalized basic sequence.
To put in context the basic tools that we will use, we would like to mention that there exist families of finite subsets of $\N$ with suitable characteristics known as barriers, which had been useful in the construction of examples and counterexamples, and study of Banach spaces (see, for instance, \cite[Part B]{ss} and \cite{LT}). An example within this study is the use of barriers in the generation of certain block sequences of a normalized basic sequence, this application is based on the importance of block subsequence of a basic sequence in Banach Space Theory (see the Krivine Theorem and the Zipping Theorem). In this way, we will introduce the notion of
$((\mathcal{B})_{i=1}^k,\varepsilon)$-block oscillation stable sequence which uses blocks of a finite sequence of barriers
which is based on the idea of oscillation stability of the article \cite{cg}, and we will show that the Ramsey Theorem is equivalent to (among other conditions) the statement that assures that for every finite sequence $(\mathcal{B}_i)_{i=1}^k$ of barriers, every $\varepsilon>0$ and every normalized sequence $(x_i)_{i\in\N}$ there is a subsequence $(x_i)_{i\in M}$ that is  $((\mathcal{B}_i\cap\mathcal{P}(M))_{i=1}^k,\varepsilon)-$block oscillation stable. This new notion of oscillation stability motivated us to introduce the $(\mathcal{B}_i)_{i\in\N}-$block asymptotic models of a normalized basic sequence where $(\mathcal{B}_i)_{i\in\N}$ is a sequence of barriers, which  generalizes the Brunel-Sucheston spreading models. This allows us to establish an equivalence of the Ramsey Theorem within the context of the existence of block asymptotic models. Although the block asymptotic models are a generalization of the spreading models, there are differences between them, through an example we will see that not all block asymptotic models are spreading. 

\medskip

In the sequel, we introduce the notation and terminology that are needed.

Given an infinite set $X$, for each $\kappa\leq\omega$, we set $[X]^{\kappa} = \{ s \subseteq X : |s| = \kappa \}$, $[X]^{\leq \kappa} = \{ s \subseteq X : |s| \leq \kappa \}$ and $[X]^{<\kappa} = \{ s \subseteq X : |s| < \kappa \}$. For convenience it will be useful not to consider zero as an element of natural numbers. The increasing enumeration of an infinite set $M\in[\N]^{\omega}$ will be $\{m_i:i\in\N\}$. Also, if $M\in[\N]^{\omega}$ and $n\in\N$, then $M/n:=\{m\in M: m>n\}$. 

The symbol $FIN^*$ will denote the family of all finite nonempty subsets of $\N$. To specify the elements of $s\in FIN^*$ we will write $s=\{s(1),\ldots,s(|s|)\}$ and in this notation we will always assume that $s(1)<s(2)<\cdots<s(|s|)$. If $s, t \in FIN^*$, we say that ``$s$ is {\it less than} $t$'' and write $s<t$ if $\max (s) < \min (t)$, i.e., $s(|s|)<t(1)$. In the case when $s=\{n\}$ for some $n\in\N$, we simply write $n<t$. If $s,t\in FIN^*$, then $s\sqsubseteq t$ means that $s$ is a {\it initial segment} of $t$ and when $s<t$ we denote $s^{\frown}t=s\cup t$. If $s\in FIN^*$ and $M\in[\N]^{\omega}$, then  $s\sqsubseteq M$ also means that $s$ is a {\it initial segment} of $M$. 

\medskip

For each $i \in \N$, the symbol $e_i$ will denote the vector in $\R^{\N}$ defined by $e_i(j)= \delta_{i,j}$ (the Kronecker delta) for all $j\in\N$. The sequence $(e_i)_{i \in \N}$ will be called the {\it canonical basis} of $\R^{\N}$. For the finite case, the set $\{e_1,\cdots, e_k\}$ will denote the canonical basis of $\mathbb{R}^k$. For every $a = (a_i)_{i \in \N} \in \mathbb{R}^\N$ we define its support as $supp(a) = \{ i \in \N : a_i \neq 0 \}$. Recall that $c_{00}:=\{a=(a_i)_{i\in\N}\in\R^{\N}:supp(a)<\omega\}$ is a normed space under the supremum norm:
\begin{align*}
\|a\|_{\infty}=\sup\{|a_i|:i\in\N\}
\end{align*}
for each $a=(a_i)_{i\in\N}=\sum_{i=1}^na_ie_i\in c_{00}$.

\medskip

The required notation concerning Banach spaces is standard and can be found in books like \cite{ca}, \cite{di} and \cite{G-D}. However, for the sake of the non-expert, we recall some notation and definitions.

Our Banach spaces will be separable, infinite dimensional and real. In case that several Banach spaces are involved and we want to specify the norm of a Banach space $X$ we will write $\| \cdot \|_X$. For convenience, in some cases, a sub-sequence $(x_{m_i})_{i\in\N}$ of a sequence $(x_i)_{i\in\N}$ will be written as $(x_i)_{i\in M}$ where $M\in[\N]^{\omega}$. For every subset $Y$ of a Banach space $(X,\|\cdot\|)$, the symbol $\langle Y \rangle$ will denoted the span of $Y$. 
 
Let $(X,\|\cdot\|)$ be a Banach space and let $(x_i)_{i \in \mathbb{N}}$ be a sequence in $X$. The sequence $(x_i)_{i\in \mathbb{N}}$ is called {\it normalized} if $\|x_i\| = 1$ for all $i\in \mathbb{N}$. For each $s\in FIN^*$ we define the block vector $\mathcal{X}(s)=\frac{\sum_{i\in s}x_i}{\|\sum_{i\in s}x_i\|}$. 
We say that $(x_i)_{i \in \mathbb{N}}$ is a {\it Schauder Basis} for $X$ iff there is a unique sequence of real numbers $(a_i)_{i \in \mathbb{N}}$ such that $x = \sum_{i =1}^\infty a_ix_i$ for all $x\in X$. In general, we say that $(x_i)_{i \in \mathbb{N}}$ in $X$ is a basic sequence iff is a Schauder basis for the closure of its span. It is very convenient for us to use the following equivalence of Schauder basis condition \cite[Th. 3.2]{ca}: there is $C>0$ so that for all $m<n$ and $(a_i)_{i =1}^m \in[-1,1]^m$ we have $\|\sum_{i = 1}^m a_ix_i\| \leq C\|\sum_{i = 1}^n a_ix_i\|$, and the topological closure of its span coincides with the whole $X$. The smallest such $C$ is called the {\it basis constant} of $(x_i)_{i\in\N}$. Two basic sequences $(x_i)_{i\in\N}$ and $(y_i)_{i\in\N}$ are {\it equivalent} if there exist $A,B>0$ so that for all $n\in\N$ and scalars $(a_i)_{i=1}^n\in[-1,1]^n$ one has that
\begin{equation*}
A^{-1}\|\sum_{i=1}^ka_ix_i\|_X\leq\|\sum_{i=1}^ka_iy_i\|_Y\leq B\|\sum_{i=1}^ka_ix_i\|_X.
\end{equation*} 
For a basic sequence $(x_i)_{i\in\N}$ and a real sequence $(a_i)_{i\in\N}$, a sequence of non-zero vectors $(y_j)_{j\in\N}$ of the form
\begin{align*}
y_k=\sum_{\ell=p_k}^{p_{k+1}-1}a_{\ell}x_{\ell}
\end{align*}
where $p_1<p_2<\cdots<p_k<\cdots$ is an increasing sequence of natural numbers, is called a {\it block basis sequence}.

\medskip 

For the convenience of the reader, we believe it is convenient to keep in mind the Ramsey Theorem \footnote{A proof of this theorem lies in \cite{To} or  \cite{Od2}.} and the Brunel-Sucheston Theorem \footnote{For more properties and a proof of this theorem, consult \cite{ss} or \cite{Od2}.} that we enunciate below.

\begin{teo}\label{Teo:Ramsey}{\bf [Ramsey].} Let $N\in[\N]^{\omega}$ and $k,q\in\N$. For every function (finite coloring) $\varphi:[N]^{k} \longrightarrow\{1,\ldots,q\}$ there are $i\leq q$ and (a monochromatic set) $M \in [N]^{\omega}$ such that $[M]^k\subseteq \varphi^{-1}(i)$.
\end{teo}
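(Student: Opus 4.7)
The plan is to prove Ramsey's Theorem by induction on the arity $k$, in the classical way. For the base case $k=1$, the statement reduces to: every $\varphi: N \to \{1,\ldots,q\}$ is constant on some infinite $M \subseteq N$; this is the infinite pigeonhole principle, since a finite union of finite sets cannot exhaust the infinite set $N$. I would use this case both as the base of the induction and again at the very end of the inductive step.

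For the inductive step, assume the statement for $k$ and fix $\varphi:[N]^{k+1} \to \{1,\ldots,q\}$. I would construct recursively a decreasing chain $N = N_0 \supseteq N_1 \supseteq N_2 \supseteq \cdots$ of infinite subsets of $N$, elements $n_1 < n_2 < \cdots$, and colors $c_1, c_2, \ldots \in \{1,\ldots,q\}$ as follows. Set $n_1 := \min N_0$ and define the reduced coloring $\psi_1 : [N_0/n_1]^{k} \to \{1,\ldots,q\}$ by $\psi_1(t) := \varphi(\{n_1\} \cup t)$. The inductive hypothesis applied to $\psi_1$ yields an infinite $N_1 \subseteq N_0/n_1$ and a color $c_1$ with $\psi_1 \equiv c_1$ on $[N_1]^{k}$. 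Having defined $N_j$ infinite, set $n_{j+1} := \min N_j$, define $\psi_{j+1}$ on $[N_j/n_{j+1}]^{k}$ by $\psi_{j+1}(t) := \varphi(\{n_{j+1}\} \cup t)$, and apply the inductive hypothesis to obtain an infinite $N_{j+1} \subseteq N_j / n_{j+1}$ and a color $c_{j+1}$ on which $\psi_{j+1}$ is constant.

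The sequence $(c_j)_{j \in \N}$ takes values in the finite set $\{1,\ldots,q\}$, so the $k=1$ case supplies $i \leq q$ and an infinite $J \subseteq \N$ with $c_j = i$ for every $j \in J$. I claim that $M := \{n_j : j \in J\}$ is the desired monochromatic set. Indeed, given $s = \{n_{j_1} < \cdots < n_{j_{k+1}}\} \in [M]^{k+1}$ with $j_1 < \cdots < j_{k+1}$ in $J$, the fact that $N_{j_1+1} \supseteq N_{j_1+2} \supseteq \cdots$ and that each $n_{j_\ell}$ with $\ell \geq 2$ was chosen from (a subset of) $N_{j_1}$ gives $\{n_{j_2}, \ldots, n_{j_{k+1}}\} \subseteq N_{j_1}$; hence
\[
\varphi(s) \;=\; \psi_{j_1}\bigl(\{n_{j_2}, \ldots, n_{j_{k+1}}\}\bigr) \;=\; c_{j_1} \;=\; i,
\]
so $[M]^{k+1} \subseteq \varphi^{-1}(i)$.

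There is no genuine obstacle: the argument is entirely classical. The only point worth auditing is that the recursive construction produces an infinite $N_j$ at every stage, which is automatic from the inductive hypothesis. Thus the proof is nothing more than two nested applications of the pigeonhole principle, one embedded in the inductive hypothesis and one used at the end to extract a constant subsequence of colors.
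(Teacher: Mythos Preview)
Your proof is correct and is the standard classical argument by induction on the arity $k$. Note, however, that the paper does not supply its own proof of this theorem: it states Ramsey's Theorem as background and refers the reader to \cite{To} and \cite{Od2} for a proof, so there is no in-paper argument to compare against.
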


As mentioned above, one of our purposes is to generalize the following notion through the use of barrier blocks, in addition, it is the main concept of the Brunel-Sucheston Theorem.

\begin{defin}\label{Def:ModeloDisperso}
	Let $(x_i)_{i \in \N}$ be a normalized basic sequence in a Banach Space $(X,\|\cdot\|_X)$. A Schauder basis $(y_i)_{i\in\N}$ for a Banach space $(Y,\|\cdot\|_{Y})$ is a {\it spreading model}\footnote{For a deeper presentation of the Theory of the spreading models, see \cite{Beau}} of $(x_i)_{i \in \N}$ if there is a sequence $\varepsilon_i\searrow 0$
	such that for every $s = \{s(1),\ldots, s(k)\} \in FIN^*$ with $s(1) \geq |s| = k$ we have that
	$$
	\big|\|\sum_{j = 1}^k a_jx_{{s(j)}} \|_X -\|\sum_{j = 1}^k a_jy_j \|_Y \big| < \varepsilon_{s(1)},
	$$
	for every $(a_i)_{i =1}^ k \in [-1,1]^k$. We also say that $(x_i)_{i \in \mathbb{N}}$ {\it generates} $(y_i)_{i \in \mathbb{N}}$ (or $Y$) as spreading model.	
\end{defin}

\begin{teo}{\bf [Brunel-Sucheston].}
\label{Teo:BruSu} 
Every normalized basic sequence has a subsequence that generates a spreading model.
\end{teo}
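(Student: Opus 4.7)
The plan is to use Theorem~\ref{Teo:Ramsey} to stabilize, for each $k$ and for a sufficiently dense finite set of coefficient vectors, the value of $\|\sum_{j=1}^{k}a_{j}x_{s(j)}\|_{X}$ as $s$ ranges over $[\N]^{k}$, and then to diagonalize over $k$ and over $\varepsilon\searrow 0$ to obtain a single subsequence on which all such norms converge as $s(1)\to\infty$. The limiting values will define the norm on the spreading model, and the rate of convergence will furnish the sequence $\varepsilon_{i}\searrow 0$ required by Definition~\ref{Def:ModeloDisperso}.

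First I would fix $k\in\N$ and $\varepsilon>0$ and choose a finite $\varepsilon/(2k)$-net $F=\{a^{(1)},\ldots,a^{(N)}\}\subseteq[-1,1]^{k}$ in the $\ell_{\infty}$-metric. For each $a\in F$ the map $\varphi_{a}\colon[\N]^{k}\to[0,k]$ given by $\varphi_{a}(s)=\|\sum_{j=1}^{k}a_{j}x_{s(j)}\|_{X}$ is bounded, so partitioning $[0,k]$ into finitely many intervals of length $\varepsilon/2$ yields a finite coloring of $[\N]^{k}$. Applying Theorem~\ref{Teo:Ramsey} produces an infinite monochromatic set, and iterating this once for each of the finitely many $a\in F$ yields an infinite set $M_{k,\varepsilon}\subseteq\N$ on which every $\varphi_{a}$ with $a\in F$ oscillates by less than $\varepsilon/2$. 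The triangle inequality, combined with the fact that coefficients from $[-1,1]^{k}$ lie within $\varepsilon/(2k)$ of some point of $F$ and that $\|\sum_{j=1}^{k}b_{j}x_{s(j)}\|_{X}\leq k\|b\|_{\infty}$, upgrades this to $\varepsilon$-stability of $\varphi_{a}$ on $[M_{k,\varepsilon}]^{k}$ uniformly over all $a\in[-1,1]^{k}$.

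Next I would perform a diagonal construction: apply the previous paragraph with $k=n$ and $\varepsilon=1/n$ to build a descending chain $N_{1}\supseteq N_{2}\supseteq\cdots$ of infinite subsets of $\N$, and form the diagonal set $M=\{m_{1}<m_{2}<\cdots\}$ with $m_{n}\in N_{n}$. For every $k\in\N$ and every $n\geq k$, the tail $M/m_{n-1}$ sits inside $N_{n}$, so for $s\in[M]^{k}$ with $s(1)\geq m_{n}$ the value $\|\sum a_{j}x_{s(j)}\|_{X}$ is within $1/n$ of a limit $L_{k}(a)$, uniformly in $(a_{j})\in[-1,1]^{k}$. Define on $c_{00}$ the functional
\[
\Bigl\|\sum_{j=1}^{k}a_{j}y_{j}\Bigr\|_{Y}:=\lim_{\substack{s\in[M]^{k}\\ s(1)\to\infty}}\Bigl\|\sum_{j=1}^{k}a_{j}x_{s(j)}\Bigr\|_{X},
\]
which is positively homogeneous and subadditive by inheritance from $\|\cdot\|_{X}$, and non-degenerate since the subsequence $(x_{i})_{i\in M}$ is basic with the same basis constant $C$, giving $\|\sum a_{j}y_{j}\|_{Y}\geq C^{-1}\max_{j}|a_{j}|$. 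Completing the resulting normed space produces $(Y,\|\cdot\|_{Y})$, and passing the basis inequality to the limit shows that $(y_{i})_{i\in\N}$ is a Schauder basis with basis constant at most $C$.

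The main obstacle will be organizing the diagonalization so that a \emph{single} error $\varepsilon_{s(1)}$, depending only on $s(1)$, controls the deviation simultaneously for all $k\leq s(1)$ and all $(a_{j})\in[-1,1]^{k}$. This is where the Schreier-type condition $s(1)\geq|s|=k$ is essential: it forces any admissible $s\subseteq M$ with $s(1)=m_{n}$ to satisfy $k\leq n$, so the $s$-indices lie automatically in $N_{n}$, and the uniform $1/n$-stability obtained in Step 1 at level $n$ applies. Setting $\varepsilon_{j}=1/n$ for $m_{n-1}<j\leq m_{n}$ then gives a sequence with $\varepsilon_{j}\searrow 0$ satisfying the bound of Definition~\ref{Def:ModeloDisperso}, completing the proof.
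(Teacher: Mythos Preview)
Your argument is correct and follows the same route as the paper, which obtains Brunel--Sucheston as the special case $\mathcal{B}_i=[\N]^1$ of Theorem~\ref{BrunelSuchestonBBarreras}: a Ramsey/total-boundedness stabilization of the norm values, followed by a diagonal extraction, with the paper encoding your finite-net step abstractly as compactness of the space $(\mathcal{N}_k,d_k)$ of seminorms. Two cosmetic points worth tightening: your net constants as written yield oscillation $\le 3\varepsilon/2$ rather than $\varepsilon$ (take an $\varepsilon/(4k)$-net instead), and the passage from $1/n$-stability on $[N_n]^n$ to stability on $[N_n]^k$ for $k<n$ should explicitly invoke the zero-padding $(a_1,\ldots,a_k)\mapsto(a_1,\ldots,a_k,0,\ldots,0)$ together with an extension of $s\in[N_n]^k$ to some $s'\in[N_n]^n$.
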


The article is organized as follow: In the second section, we recall the definition of a barrier and state some basic properties of barriers, in particular, some characteristics that relate to the barriers and their lexicographical rank. The third section is devoted to introduce and study the family of blocks of a finite sequence of barriers. In this section, we prove that these families have the Ramsey-type property and present the version for analysts of this property. In the fourth section, we introduce the notion of $((\mathcal{B}_i)_{i=1}^k,\varepsilon)-$block oscillation stable sequence and show that the Ramsey Theorem is equivalent to several statements, some of them involve the block oscillation stability. The fifth section concerns to the generalization of the Brunel-Sucheston Theorem via the $((\mathcal{B})_{i\in\N},\varepsilon)$-block asymptotic models. Finally, in the last section, we build an example of a normalized basic sequence that generates two equivalent block asymptotic models, only one of them is spreading and they are associated with distinct sequences of barriers. In addition, base on this example, we formulate open problems concerning about the existence of two non-equivalent block asymptotic models from the same normalized basic sequence by using two distinct sequences of barriers.

%%%%%%%%%%%%%%%%%%%%%%%%%%
%%%%%%%%%%%%%%%%%%%%%%%%%%		
	\section{Barriers}
%%%%%%%%%%%%%%%%%%%%%%%%%

We start this section with notation that we will us throughout this paper: If $s \in FIN^*$, $M\in[\N]^{\omega}$ and $\mathcal{B}\subseteq FIN^*$, then
\begin{align*}
\mathcal{B}\upharpoonright_{M}:=\{s\in\mathcal{B}:s\subset M\}\\
\mathcal{B}_{s}=\{t\in FIN^*:s^{\frown} t\in\mathcal{B}\}.
\end{align*}

Now, we present a basic notion of the Nash-William Theory.

\begin{defin}\label{Def:Barrera}%Barrera 
	Given a set $N\in[\N]^{\omega}$, a nonempty family $\mathcal{B}$ of finite subsets of $\N$ is called a {\it barrier} on $N$ provided that:
	\begin{enumerate}[label={(B\arabic*)}]
		\item $\mathcal{B} \subseteq [N]^{< \omega}\setminus\{\emptyset\}$.\label{B1}
		\item For every $s,t\in \mathcal{B}$ if $s\neq t$ then $s\nsubseteq t$ and $t\nsubseteq s$.\label{Bcontencion}
		\item For every $M\in[N]^{\omega}$ there is $s\in\mathcal{B}$ such that $s\sqsubseteq M$.\label{Bsegmento}
	\end{enumerate}
\end{defin}

The simplest examples of barriers are:
\begin{enumerate}[label={\arabic*.}]
	\item For each $k\in \N$ and $N\in[\N]^{\omega}$, $[N]^k=\{s\subseteq N:|s|=k\}~\text{is a barrier on}~N.$
	\item The Schreier barrier $\mathcal{S}=\{s\in FIN^*: |s|=\min (s)\}$.
\end{enumerate}

We list in the first lemma of this section the basic properties of barriers that we will use.

\begin{lem}\label{Lem:PropiedadesBarrera}
\begin{enumerate}[label={\arabic*.}]
\item  Let $\mathcal{B}\subseteq FIN^{*}$ be a barrier on $N\in[\N]^{\omega}$.
	\begin{enumerate}
		\item For every $M\in[N]^{\omega}$, the restriction $\mathcal{B}\upharpoonright_{M}$ is a barrier on $M$.
		\item For each $s\in FIN^{*}\setminus\mathcal{B}$, the set $\mathcal{B}_s$ is a barrier on $N/\max(s)$.
		\item Every barrier $\mathcal{B}$ is associated to a barrier on $\N$: If $\{n_i:i\in\N\}$ are in ascending order, then $\mathcal{B}_{\N} :=\{\{i\in\N:n_i\in s\}:s\in\mathcal{B}\}$ is a barrier on $\N$.
	\end{enumerate}
\item If $\mathcal{B}_1$ and $\mathcal{B}_2$ are barriers on some $N\in[\N]^{\omega}$ so that $\mathcal{B}_1\subseteq\mathcal{B}_2$, then $\mathcal{B}_1=\mathcal{B}_2$.

\item Let $k\in\N$ and let $\mathcal{B}_1,\ldots,\mathcal{B}_k\subseteq FIN^{*}$ be barriers on $N\in[\N]^{\omega}$. Then the set $$\bigoplus^k_{i=1}\mathcal{B}_i:=\{s_1^{\frown}\cdots^{\frown}s_k:s_i\in\mathcal{B}_i~ \forall i\leq k ~\text{and}~ s_1<\cdots<s_k\}$$ is a barrier on $N$.
\end{enumerate}
\end{lem}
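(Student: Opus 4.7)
For Parts 1(a) and 1(c), I would verify the axioms in turn. Axioms (B1) and (B2) are transparent, and (B3) for $\mathcal{B}\upharpoonright_{M}$ follows because $[M]^{\omega}\subseteq[N]^{\omega}$, so the initial segment supplied by (B3) for $\mathcal{B}$ sits inside $M$ automatically. Part 1(c) is the same verification pushed through the order isomorphism $i\mapsto n_{i}$ between $(\N,<)$ and $(N,<)$, which carries initial segments and subset-inclusions back and forth. For Part 1(b) the statement implicitly requires $s\subseteq N$ with $s$ a proper initial segment of some element of $\mathcal{B}$ (equivalently $\mathcal{B}_{s}\neq\emptyset$); under this hypothesis, the bijection $t\mapsto s^{\frown}t$ transports (B1) and (B2) from $\mathcal{B}$ to $\mathcal{B}_{s}$, and for (B3) I would take $P\in[N/\max s]^{\omega}$, apply (B3) for $\mathcal{B}$ to $s\cup P\in[N]^{\omega}$ to obtain $r\in\mathcal{B}$ with $r\sqsubseteq s\cup P$, and then rule out the case $r\sqsubseteq s$ by invoking (B2) for $\mathcal{B}$ against some barrier element extending $s$; this forces $s\sqsubseteq r$, so that $t:=r\setminus s\in\mathcal{B}_{s}$ satisfies $t\sqsubseteq P$.

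For Part 2, I would fix $t\in\mathcal{B}_{2}$, form $L:=t\cup(N/\max t)\in[N]^{\omega}$ so that $t\sqsubseteq L$, and apply (B3) for $\mathcal{B}_{1}$ to obtain $r\in\mathcal{B}_{1}\subseteq\mathcal{B}_{2}$ with $r\sqsubseteq L$. Since $r$ and $t$ are two initial segments of the same infinite sequence, one is a subset of the other; (B2) for $\mathcal{B}_{2}$ then forces $r=t$, and so $t\in\mathcal{B}_{1}$, giving the reverse inclusion $\mathcal{B}_{2}\subseteq\mathcal{B}_{1}$.

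For Part 3, (B1) is immediate. For (B3), given $M\in[N]^{\omega}$ I would iteratively peel off blocks: apply (B3) for $\mathcal{B}_{1}$ to $M$ to obtain $s_{1}\sqsubseteq M$, then (B3) for $\mathcal{B}_{2}$ to $M/\max s_{1}\in[N/\max s_{1}]^{\omega}$ to obtain $s_{2}$, and so on up to $s_{k}$; the concatenation $s_{1}^{\frown}\cdots^{\frown}s_{k}$ is an initial segment of $M$ lying in $\bigoplus_{i=1}^{k}\mathcal{B}_{i}$. The delicate axiom is (B2), because concatenation of blocks behaves cleanly with $\sqsubseteq$ but not obviously with $\subseteq$. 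I would first prove by induction on $k$ the auxiliary fact that $\bigoplus_{i=1}^{k}\mathcal{B}_{i}$ is a $\sqsubseteq$-antichain (so that every $M\in[N]^{\omega}$ has a \emph{unique} initial segment in $\bigoplus$): if $r\sqsubseteq s$ in $\bigoplus$, then the leading blocks $r_{1},s_{1}\in\mathcal{B}_{1}$ are both initial segments of the common infinite extension $s\cup(N/\max s)$ and so coincide by (B2) for $\mathcal{B}_{1}$, and one recurses on the tails. The full (B2) then follows by contradiction: assuming $r\subsetneq s$ in $\bigoplus$, extend $r$ to $M:=r\cup(N/\max s)$ so that $r$ is forced to be the unique initial segment of $M$ in $\bigoplus$, and then confront this against the block structure of $s$, using (B2) for each $\mathcal{B}_{i}$ to match corresponding blocks. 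Closing this last clash is the technical heart of the lemma, and I expect it to be where most of the work lies.
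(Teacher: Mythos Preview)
The paper states this lemma without proof, treating it as a list of folklore properties, so there is no ``paper's proof'' to compare against; I will simply assess your argument on its own terms.

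Your treatments of 1(a), 1(c), and 2 are correct, and your remark that 1(b) needs the extra hypothesis $\mathcal{B}_s\neq\emptyset$ (equivalently, that $s$ is a proper initial segment of some element of $\mathcal{B}$) is well taken: as written the statement fails, e.g.\ for any $s$ that properly extends an element of $\mathcal{B}$. The paper silently handles this later by writing ``we assume without loss of generality that $\mathcal{B}_{\{n\}}\neq\emptyset$''.

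For Part 3, your arguments for (B1), (B3), and the $\sqsubseteq$-antichain property are fine. The gap is in your sketch for the full Sperner property (B2). Setting $M:=r\cup(N/\max s)$ does pin down $r$ as the unique $\sqsubseteq$-initial segment of $M$ lying in $\bigoplus$, but $s$ need not be contained in $M$ (any element of $s\setminus r$ below $\max s$ is missing), so there is nothing to ``confront'' $s$ against, and I do not see how your outline closes. A clean way to finish is to prove, by induction on $j$, that $\max r_j\geq\max s_j$ for every $j\leq k$: if $\max r_j\geq\max s_j$ but $\max r_{j+1}<\max s_{j+1}$, then every element of $r_{j+1}$ lies in $s$, is strictly above $\max r_j\geq\max s_j$, and is strictly below $\max s_{j+1}$, forcing $r_{j+1}\subseteq s_{j+1}$ and hence $r_{j+1}=s_{j+1}$ by (B2) for $\mathcal{B}_{j+1}$, a contradiction. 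Taking $j=k$ gives $\max r=\max r_k\geq\max s_k=\max s$, so $\max r=\max s$; rerunning the same computation now yields $r_k=s_k$, and since then $r\setminus r_k\subseteq s\setminus s_k$, the inductive hypothesis on $k$ finishes the proof.
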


Association 1.(c) allows us to assume that all our barriers could be taken on $\N$ as the case suits us, although in some situations we will not consider it.

\medskip

The use of notion of ``$\alpha-$uniform barrier'' in inductive arguments to establish barrier properties is common. To avoid the use of this concept we will obtain some characteristics of barriers by employing a more basic notion, known as ``the lexicographic order'' of a barrier. For our purposes, we basically need Corollary \ref{Cor:rankBar} which will prove without the use of 
uniform barriers.
\medskip

{\it The lexicographic order} on $FIN^*$ is defined by $s<_{lex}t$~ whenever $\min(s\bigtriangleup t)\in s$, where $s\bigtriangleup t$ is the symmetric difference $s\backslash t\cup t\backslash s$, for every $s,t\in FIN^*$.

\medskip

Next we state the must outstanding properties of the lexicographic order of a barrier (for a proof see \cite[L. II.2.15]{ss}).

\begin{lem}\label{Lem:OrdenLexicograficoBarrera} 
Every barrier is a well-ordered set with respect to the lexicographic order. A based on this, the order type of lexicographic order of a barrier $\mathcal{B}$ is denoted by $rank(\mathcal{B})$ and is named {\it the lexicographical rank of $\mathcal{B}$}.
\end{lem}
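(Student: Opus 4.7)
The plan is as follows. The relation $<_{lex}$ is already a strict linear order on $FIN^*$: for $s \ne t$, the element $\min(s \bigtriangleup t)$ belongs unambiguously either to $s$ or to $t$, so exactly one of $s <_{lex} t$, $t <_{lex} s$ holds. Hence proving that $(\mathcal{B}, <_{lex})$ is well-ordered reduces to ruling out an infinite strictly descending chain $s_1 >_{lex} s_2 >_{lex} \cdots$ in $\mathcal{B}$. My strategy is to diagonalize such a chain to produce an infinite set $M \in [N]^\omega$ whose initial segments avoid $\mathcal{B}$, directly contradicting the barrier axiom (B3).

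The construction stabilizes one coordinate at a time. For consecutive terms, the descent condition $s_{n+1} <_{lex} s_n$ means $\min(s_n \bigtriangleup s_{n+1}) \in s_{n+1}$; combined with the fact that each $s_n$ is listed in increasing order, this forces the sequence of first coordinates $(s_n(1))_n$ to be weakly decreasing in $\N$. Indeed, if $s_n(1) < s_{n+1}(1)$ then every element of $s_{n+1}$ is $\geq s_{n+1}(1) > s_n(1)$, hence $s_n(1) \notin s_{n+1}$, so $\min(s_n \bigtriangleup s_{n+1}) = s_n(1) \in s_n$, which would reverse the inequality. Being a weakly decreasing sequence of naturals bounded below, $(s_n(1))_n$ stabilizes at some $m_1 \in N$ past an index $N_1$. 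On the tail $n \geq N_1$ the element $m_1$ no longer belongs to $s_n \bigtriangleup s_{n+1}$, so an identical argument applied to second coordinates gives $s_n(2)$ weakly decreasing and eventually constant at some $m_2 > m_1$. Iterating yields strictly increasing $m_1 < m_2 < \cdots$ in $N$ together with indices $N_1 \leq N_2 \leq \cdots$ such that $s_n(i) = m_i$ for all $i \leq k$ and all $n \geq N_k$. (The tacit assumption $|s_n| \geq k$ for $n$ large is automatic: if infinitely many $s_n$ were shorter, they would all coincide with a fixed initial segment of $M$, contradicting pairwise distinctness of the chain.)

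Now set $M = \{m_1, m_2, \ldots\} \in [N]^\omega$ and apply (B3) to obtain $s \in \mathcal{B}$ with $s \sqsubseteq M$, say $s = \{m_1, \ldots, m_k\}$. For every $n \geq N_k$ we have $s \subseteq s_n$, since $s$ is precisely the first $k$ entries of $s_n$; then property (B2) forces $s_n = s$ for all $n \geq N_k$, contradicting the strict descent of the chain. This establishes the well-ordering, and $rank(\mathcal{B})$ is then well-defined as the order type of $(\mathcal{B}, <_{lex})$. The only genuine obstacle is bookkeeping: keeping straight the direction of $<_{lex}$ under the convention $s <_{lex} t \iff \min(s \bigtriangleup t) \in s$, and justifying the length lower bound $|s_n| \geq k$ at each inductive step. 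Once these are in hand, the argument is a clean marriage of coordinatewise pigeonhole with the two barrier axioms (B2) and (B3).
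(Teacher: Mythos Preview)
Your proof is correct. The paper does not supply its own argument for this lemma; it simply cites \cite[L.~II.2.15]{ss} (Argyros--Todorcevic) for a proof. What you have written is precisely the standard proof of that result: assume a strictly $<_{lex}$-descending sequence in $\mathcal{B}$, stabilize coordinates one at a time to build $M\in[N]^\omega$, then invoke (B3) to find $s\in\mathcal{B}$ with $s\sqsubseteq M$ and derive a contradiction with (B2). Your handling of the direction of $<_{lex}$ and of the length condition $|s_n|\geq k$ is correct; the only cosmetic point is that at step $k$ the ``shorter'' $s_n$ must in fact equal the single set $\{m_1,\ldots,m_{k-1}\}$ (given the inductive hypothesis $|s_n|\geq k-1$), so at most one such $s_n$ exists---which is exactly what your pigeonhole remark yields. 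Since the paper offers no in-house proof to compare against, there is nothing further to contrast.
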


Given a barrier $\mathcal{B}$ on $N\in[\N]^{\omega}$, we remark that the lexicographical rank of $\mathcal{B}$ coincides with the lexicographical rank of its associated barrier on $\N$ (clause 1.(c) of Lemma \ref{Lem:PropiedadesBarrera}).

\medskip

Using  elementary properties of well-ordered sets we get the following result.

\begin{lem}\label{Lem:rankBn}
Let $\mathcal{B}$ be a barrier on $\N$. Then $rank(\mathcal{B}_{\{n\}})<rank(\mathcal{B})$ for all $n\in\N$.
\end{lem}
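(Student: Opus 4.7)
The plan is to lex-order-preservingly embed $\mathcal{B}_{\{n\}}$ into the ``$n$-th column'' of $\mathcal{B}$ and then exploit the ordinal-sum decomposition of $\mathcal{B}$ along minima to force a strict gap.

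First, define $\phi\colon\mathcal{B}_{\{n\}}\to\mathcal{B}$ by $\phi(t)=\{n\}\cup t$; this is well-defined since $t\in\mathcal{B}_{\{n\}}$ forces $n<\min(t)$, so $\{n\}^{\frown}t=\{n\}\cup t\in\mathcal{B}$. Because $n$ lies in neither $t_1$ nor $t_2$ for any $t_1,t_2\in\mathcal{B}_{\{n\}}$, we have $\phi(t_1)\bigtriangleup\phi(t_2)=t_1\bigtriangleup t_2$, and the minimum of this symmetric difference is necessarily different from $n$; hence $\phi$ is injective and lex-order preserving. Writing $\mathcal{B}^m:=\{s\in\mathcal{B}:\min(s)=m\}$ and letting $\alpha_m$ denote its lex order type for each $m\in\N$, we get $\phi(\mathcal{B}_{\{n\}})\subseteq\mathcal{B}^n$ and therefore $\text{rank}(\mathcal{B}_{\{n\}})\leq\alpha_n$.

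Next, I would show that the lex order on $\mathcal{B}$ stratifies by minimum: if $s,s'\in\mathcal{B}$ with $\min(s)<\min(s')$, then $\min(s\bigtriangleup s')=\min(s)\in s$, so $s<_{lex}s'$. Consequently, for each $k\in\N$ the union $\bigcup_{m=1}^{k}\mathcal{B}^m$ is a lex-initial segment of $\mathcal{B}$ of order type $\alpha_1+\cdots+\alpha_k$. Applying property \ref{Bsegmento} to the infinite tail $\{m,m+1,m+2,\ldots\}$ shows that every $m\in\N$ occurs as the minimum of some element of $\mathcal{B}$, so $\alpha_m\geq 1$ for every $m\in\N$.

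Putting the pieces together yields the chain
\[
\text{rank}(\mathcal{B}_{\{n\}})\leq\alpha_n\leq\alpha_1+\cdots+\alpha_n<\alpha_1+\cdots+\alpha_n+\alpha_{n+1}\leq\text{rank}(\mathcal{B}),
\]
in which the strict middle step uses $\alpha_{n+1}\geq 1$ together with the fact that ordinal addition is strictly increasing in its right argument for positive summands. I expect the most delicate step to be verifying that $\bigcup_{m=1}^{k}\mathcal{B}^m$ is genuinely a lex-initial segment of $\mathcal{B}$, since this is precisely the bridge that turns the positivity $\alpha_{n+1}\geq 1$ into the strict inequality $\text{rank}(\mathcal{B}_{\{n\}})<\text{rank}(\mathcal{B})$ required by the lemma.
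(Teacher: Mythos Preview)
Your argument is correct and is precisely the kind of ``elementary properties of well-ordered sets'' computation the paper alludes to without spelling out: the paper gives no proof of this lemma beyond that phrase, and your stratification of $\mathcal{B}$ by minima into the ordinal sum $\sum_m \alpha_m$, together with the order-embedding of $\mathcal{B}_{\{n\}}$ into the $n$-th block and the nonemptiness of $\mathcal{B}^{n+1}$, is the natural way to make it precise. The step you flag as delicate---that $\bigcup_{m\leq k}\mathcal{B}^m$ is a lex-initial segment---is handled correctly by your observation that smaller minimum forces lex-smaller, so nothing is missing.
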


To establish the important results of this section (Theorem \ref{Teo:BarOmk} and Corollary \ref{Cor:rankBar}) and that will be used in the following ones, we need to state and prove the following three preliminary lemmas.

\begin{lem}\label{Lem:s-rankB}
	Every barrier $\mathcal{B}$ satisfies that $rank(\mathcal{B})\geq\omega^{|s|}$ for all $s\in\mathcal{B}$.
\end{lem}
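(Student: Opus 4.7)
The plan is strong induction on $k=|s|$. For the base case $k=1$, I would first observe that every barrier is infinite: if $\mathcal{B}$ were finite then any $M\in[\N]^{\omega}$ with $\min(M)>\max\bigcup\mathcal{B}$ would have no initial segment in $\mathcal{B}$, contradicting (B3). Hence $rank(\mathcal{B})\geq\omega=\omega^{1}$, which proves the claim when $|s|=1$.

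For the inductive step, by clause 1.(c) of Lemma \ref{Lem:PropiedadesBarrera} I may assume $\mathcal{B}$ is a barrier on $\N$. Given $s=\{s(1),\ldots,s(k)\}\in\mathcal{B}$ with $k\geq 2$, I would decompose $\mathcal{B}$ in lexicographic order into its minimum-blocks $\mathcal{B}_{n}=\{t\in\mathcal{B}:\min(t)=n\}$, $n\in\N$. A direct computation from the definition of $<_{lex}$ shows that these blocks appear in increasing order of $n$, and each $\mathcal{B}_{n}$ has lex order type $rank(\mathcal{B}_{\{n\}})$ if $\{n\}\notin\mathcal{B}$ and $1$ otherwise; hence $rank(\mathcal{B})=\sum_{n\in\N}rank(\mathcal{B}_{n})$ as ordinals. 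Consequently $rank(\mathcal{B})\geq\omega^{k-1}\cdot\omega=\omega^{k}$ as soon as infinitely many $n$ satisfy $rank(\mathcal{B}_{n})\geq\omega^{k-1}$. By the inductive hypothesis applied to $\mathcal{B}_{\{n\}}$, this reduces to the following density claim: \emph{for every $n_{0}\in\N$ there exists $t\in\mathcal{B}$ with $\min(t)>n_{0}$ and $|t|\geq k$} (such a $t$ witnesses that $\mathcal{B}_{\{\min(t)\}}$ is a barrier containing $t\setminus\{\min(t)\}$ of size $\geq k-1$, so the IH yields $rank(\mathcal{B}_{\{\min(t)\}})\geq\omega^{k-1}$).

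The density claim is the main obstacle, and I plan to prove it by contradiction. After replacing $n_{0}$ by $\max(n_{0},s(k))$ we may assume $n_{0}\geq s(k)$; if the claim fails then every element of $\mathcal{B}\upharpoonright_{\N/n_{0}}$ has size at most $k-1$. For each $j=1,\ldots,k-1$ form
\[
M^{(j)}=\{s(j),s(j+1),\ldots,s(k-1)\}\cup\{n_{0}+1,n_{0}+2,\ldots\}.
\]
Proper initial segments of $M^{(j)}$ contained in $s$ are excluded from $\mathcal{B}$ by (B2) with $s$, so (B3) supplies a unique
\[
u^{(j)}=\{s(j),\ldots,s(k-1),n_{0}+1,\ldots,n_{0}+p^{(j)}\}\in\mathcal{B}
\]
with $p^{(j)}\geq 1$. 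Similarly, applying (B3) to $\mathcal{B}\upharpoonright_{\N/n_{0}}$ and $M^{*}=\{n_{0}+1,n_{0}+2,\ldots\}$ yields $t=\{n_{0}+1,\ldots,n_{0}+q\}\in\mathcal{B}$ with $1\leq q\leq k-1$. Imposing (B2) between $u^{(j)}$ and $u^{(j')}$ for $j<j'$ forces $p^{(j)}<p^{(j')}$ (otherwise $u^{(j)}\supsetneq u^{(j')}$), and between each $u^{(j)}$ and $t$ forces $q>p^{(j)}$ (otherwise $u^{(j)}\supsetneq t$). Combining yields $1\leq p^{(1)}<p^{(2)}<\cdots<p^{(k-1)}<q\leq k-1$, giving $k$ distinct positive integers all at most $k-1$, which is impossible. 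This contradiction closes the density claim and completes the induction.
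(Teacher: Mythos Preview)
Your proof is correct, but it takes a different route from the paper's. The paper proceeds by explicitly constructing, for each $s\in\mathcal{B}$ with $|s|=k$, a subset $\mathcal{F}\subseteq\mathcal{B}$ lying lexicographically above $s$ and of lex order type exactly $\omega^{k}$; the inductive step builds an infinite sequence $s_i\in\mathcal{B}$ with $|s_i|\geq k+1$ by applying (B3) to shifted tails and invoking (B2) against earlier $s_j$'s, then lifts copies of $\omega^{k}$ from the restricted barriers $\mathcal{B}_{\{s_i(1),\ldots,s_i(n_i-k)\}}$ via the inductive hypothesis. You instead decompose $\mathcal{B}$ into its minimum-blocks $\mathcal{B}_n$, reduce to showing that infinitely many $\mathcal{B}_{\{n\}}$ contain an element of size $\geq k-1$, and then settle the resulting density claim by a neat pigeonhole: the $k-1$ initial-segment witnesses $u^{(j)}$ together with $t$ force $k$ distinct integers into $\{1,\ldots,k-1\}$. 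Both arguments hinge on the same phenomenon\textemdash long elements of $\mathcal{B}$ persist arbitrarily far out\textemdash but your pigeonhole extracts it more directly and with less bookkeeping, while the paper's explicit construction yields the slightly stronger inductive statement that the copy of $\omega^{k}$ can be placed entirely above $s$ in lex order. One minor point worth making explicit in your write-up is the passage from ``infinitely many $n$ with $rank(\mathcal{B}_n)\geq\omega^{k-1}$'' to $rank(\mathcal{B})\geq\omega^{k}$: this follows because an $\omega$-sequence of pairwise $<_{lex}$-separated blocks, each admitting an embedding of $\omega^{k-1}$, yields an embedding of $\omega^{k-1}\cdot\omega$ into $(\mathcal{B},<_{lex})$.
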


\begin{proof}
	Suppose without loss of generality that $\mathcal{B}$ is a barrier on $\N$ and let $s\in\mathcal{B}$. It is enough to prove that $\mathcal{B}$ has a subset $\mathcal{F}$ such that $s\leq_{lex}t$ for all $t\in \mathcal{F}$ and its lexicographical rank is $\omega^k$ where $k=|s|$. We proceed by induction on $k$:
	For the case $k=1$, put $s=\{s(1)\}\in\mathcal{B}$. By definition of barrier, for every $i\in\N$ we get $s_i\in\mathcal{B}$ such that $$s_i\sqsubseteq\{s(1)+i-1,s(1)+i,s(1)+i+1,s(1)+i+2,\ldots\}.$$ Notice that $s_i<_{lex}s_{i+1}$ for all $i\in\N$, by construction. Hence, there is an order isomorphism from $\mathcal{F}:=\{s_i:i\in\N\}$ to $\omega$, which is induced naturally by lexicographic order. This shows that $rank(\mathcal{F})=\omega$.
	%%%%%------------------------------------
	For $k=2$, we set $s=\{s(1),s(2)\}\in\mathcal{B}$ and $s_1:=s$. By property \ref{Bsegmento}, it is possible to find $s_i\in\mathcal{B}$ such that $$s_i\sqsubseteq\{s(2)+i-2,s(2)+i-1,s(2)+i,s(2)+i+1,\ldots\}$$ for each $i\in\N/1$. The property \ref{Bcontencion} implies that $\{s(2)+i-2,s(2)+i-1\}\sqsubseteq s_i$ for every $i\in\N/1$. Thus $|s_i|\geq 2$ and $s_i<_{lex}s_{i+1}$ for all $i\in\N$. Similarly, for each $i,j\in\N$ there is an $s_{i,j}\in\mathcal{B}$ so that $s_{i,j}\sqsubseteq \{s_i(1),\ldots s_i(n_i-1),s_i(n_i)+j,s_i(n_i)+j+1,\ldots\}$ and $\{s_i(1),\ldots s_i(
	n_i-1),s_i(n_i)+j\}\sqsubseteq s_{ij}$, where $n_i:=|s_i|$. It follows from construction that $s_i<_{lex}s_{i,1}$ and $s_{i,j}<_{lex}s_{i,j+1}<_{lex}s_{i+1}$ for every $i,j\in\N$. This shows that there is an order isomorphism from $\mathcal{F}:=\{s_i:i\in\N\}\cup\{s_{i,j}:i,j\in\N\}$ to $\omega^2$ given by the lexicographic order.
	
	%%%%%-------------------------------------
	\medskip
	 
	Suppose now that if  $s\in\mathcal{B}$ and $|s|=k$, then there is a family $\mathcal{F}\subseteq\mathcal{B}$ with lexicographical rank equal to $\omega^k$ such that $s\leq_{lex}t$ for all $t\in \mathcal{F}$. To proceed to the inductive step $k+1$, we set $s=\{s(1),\ldots,s(k+1)\}\in\mathcal{B}$. Since $\mathcal{B}$ is a barrier on $\N$, for every $i\in\N$. there is an element $s_i\in\mathcal{B}$ such that
	\begin{align*}
		s_i\sqsubseteq\left\{
		\begin{array}{ll}
		\{s(i),s(i+1),\ldots s(k),s(k+1),s(k+1)+1,\ldots\}&\text{if}~i\leq k+1\\
		\{s(k+1)+(i-k)-1,s(k+1)+(i-k),&\\
		~~s(k+1)+(i-k)+1,s(k+1)+(i-k)+2,\ldots\}&\text{if}~ i>k+1
		\end{array}
		\right..
	\end{align*}
	Also, condition \ref{Bcontencion} of barriers implies that
	\begin{align*}
		&\{s(i),s(i+1),\ldots s(k),s(k+1),s(k+1)+1,\ldots,\\
		&\qquad\qquad s_1(k+1)+(i-1)\}\sqsubseteq s_i~\text{if}~i\leq k+1,\\
		\text{o}~&\left\{s(k+1)+(i-k)-1,s(k+1)+(i-k),s(k+1)+(i-k)+1,\ldots,\right.\\
		&\qquad\qquad \left. s(k+1)+(i-k)+(k-1)\right\}\sqsubseteq s_i~\text{if}~i> k+1
	\end{align*}
	for each $i\in \N$. Thus we have that $|s_i|\geq k+1$ and $s_i<_{lex}s_{i+1}$ for all $i\in\N$. After renaming, for every $i\in\N$, the elements of $s_i$ as $\{s_i(1),\ldots,s_i(n_i)\}$ where $|s_i|=n_i\geq k+1$, we will consider the barrier $\mathcal{B}_{\{s_i(1),\ldots,s_i({n}_i-k)\}}$, and take the element $$t_i:=\{s_i(n_i-k+1),s_{i}(n_i-k+2),\ldots,s_{i}(n_i)\}\in\mathcal{B}_{\{s_i(1),\ldots,s_i({n}_i-k)\}}.$$ Since $|t_i|=k$ for each $i\in\N$, the inductive hypothesis says that there is a set $\mathcal{G}_i\subseteq\mathcal{B}_{\{s_i(1),\ldots,s_i({n}_i-k)\}}$ with lexicographical rank equal to ${\omega}^k$ such that $t_i\leq_{lex}t$ for all $t\in \mathcal{G}_i$. Let
	\begin{equation*}
		\mathcal{F}_i=\big\{\{s_i(1),\ldots,s_i({n}_i-k)\}^{\frown}t:t\in \mathcal{G}_i\big\}\subseteq\mathcal{B}
	\end{equation*}
	for each $i\in\N$. For every $i\in\N$ the function $f_i:\mathcal{G}_i\rightarrow \mathcal{F}_i$ defined by $$f_i(t)=\{s_i(1),\ldots,s_i({n}_i-k)\}^{\frown}t$$ for each $t\in \mathcal{G}_i$ is an order isomorphism. Hence $rank(\mathcal{F}_i)=\omega^k$ for all $i\in\N$. Notice that  $s<_{lex}s_{i+1}$ for every $i\in\N$ and $s\in S_i$ (since $s(1)<s_{i+1}(1)$). This induces the existence of an order isomorphism from $\mathcal{F}=\cup_{i\in\N} \mathcal{F}_i$  to $\omega^{k+1}$, i.e., $rank(\mathcal{F})=\omega^{k+1}$.
\end{proof}

We know that $[\N]^1$ is the only $1-$uniform barrier on $\N$ (see \cite[Ex.II.3.5]{ss}) and in our context we establish this fact in another way:

\begin{cor}\label{Cor:rank=omega}
 The family $[\N]^1$ is the only barrier on $\N$ of lexicographical rank equal to $\omega$.
\end{cor}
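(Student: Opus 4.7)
The plan is to use Lemma \ref{Lem:s-rankB} together with the trivial-intersection property of barriers (Lemma \ref{Lem:PropiedadesBarrera}.2) to pin $\mathcal{B}$ down to $[\N]^1$. The key ordinal-arithmetic fact is that $\omega^k > \omega$ whenever $k \geq 2$, which will force every element of $\mathcal{B}$ to be a singleton.

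Concretely, first I would take an arbitrary barrier $\mathcal{B}$ on $\N$ with $rank(\mathcal{B}) = \omega$ and fix $s \in \mathcal{B}$. By Lemma \ref{Lem:s-rankB}, $\omega = rank(\mathcal{B}) \geq \omega^{|s|}$; since $\omega^{|s|} > \omega$ for every $|s| \geq 2$, and since elements of a barrier are nonempty by \ref{B1}, this forces $|s| = 1$. Hence $\mathcal{B} \subseteq [\N]^1$.

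Next, since $[\N]^1$ is itself a barrier on $\N$ (the simplest example listed just after Definition \ref{Def:Barrera}), and $\mathcal{B}$ is a barrier on $\N$ with $\mathcal{B} \subseteq [\N]^1$, Lemma \ref{Lem:PropiedadesBarrera}.2 yields $\mathcal{B} = [\N]^1$. Conversely, $[\N]^1$ is lexicographically order-isomorphic to $\omega$ via $\{n\} \mapsto n$, so its lexicographical rank is indeed $\omega$, confirming that $[\N]^1$ realizes the value and is thus the unique such barrier.

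There is no real obstacle here; the entire argument is a one-line consequence of Lemma \ref{Lem:s-rankB} plus the barrier-minimality property in Lemma \ref{Lem:PropiedadesBarrera}.2. The only subtlety worth flagging explicitly is the comparison $\omega^{|s|} > \omega$ for $|s| \geq 2$, which justifies restricting attention to singletons.
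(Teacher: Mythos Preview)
Your proof is correct and is precisely the argument the paper intends: the corollary is placed immediately after Lemma~\ref{Lem:s-rankB} with no separate proof, so the derivation via $\omega \geq \omega^{|s|}$ forcing $|s|=1$, followed by Lemma~\ref{Lem:PropiedadesBarrera}.2 to upgrade $\mathcal{B} \subseteq [\N]^1$ to equality, is exactly what is meant. Nothing to add.
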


The next lemma provides some properties about barriers that are different to $[\N]^1$.

\begin{lem}\label{Lem:rank>omega}
If $\mathcal{B}$ is a barrier on $\N$ and $\mathcal{B}\neq [\N]^{1}$, then $|\mathcal{B}\cap[\N]^{1}|<\omega$. As consequence, one has that $\{m\}\in\mathcal{B}$ for each $m\leq n$ and $\mathcal{B}\backslash[\N]^1$ is a barrier on $\N/n$ where $n:=\max \big(\bigcup(\mathcal{B}\cap[\N]^{1})\big)$.
\end{lem}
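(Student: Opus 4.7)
The plan is to establish the three assertions in order—finiteness of $\mathcal{B}\cap[\N]^1$, then the initial-segment shape of its union, then the barrier property of $\mathcal{B}\setminus[\N]^1$ on $\N/n$—each time applying (B3) to an infinite set that begins with one ``problematic'' element and continues with infinitely many elements chosen so that (B3) cannot return anything compatible with (B2).

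First, since $\mathcal{B}\neq[\N]^1$ and both are barriers on $\N$, the antichain property of barriers under inclusion (Lemma \ref{Lem:PropiedadesBarrera}.2) forces some $t\in\mathcal{B}$ with $|t|\geq 2$; set $t_1=\min t$. To show $S:=\{m:\{m\}\in\mathcal{B}\}$ is finite, suppose otherwise and pick an infinite $M'\subseteq S\cap(\N/t_1)$. Applying (B3) to $M:=\{t_1\}\cup M'$ yields $s\in\mathcal{B}$ with $s\sqsubseteq M$, so $\min s=t_1$. If $s=\{t_1\}$ then $s\subsetneq t$ contradicts (B2); if $|s|\geq 2$ then $s(2)=\min M'\in S$, so $\{s(2)\}\in\mathcal{B}$ with $\{s(2)\}\subsetneq s$, again contradicting (B2).

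Next, assume $\mathcal{B}\cap[\N]^1\neq\varnothing$ (otherwise the remaining statements are vacuous with $n$ undefined) and set $n:=\max\bigcup(\mathcal{B}\cap[\N]^1)$. If some $m^*\leq n$ satisfies $\{m^*\}\notin\mathcal{B}$, then $m^*<n$. Apply (B3) to $M:=\{m^*,n,n+1,n+2,\ldots\}$ to obtain $s\sqsubseteq M$ in $\mathcal{B}$ with $\min s=m^*$; since $\{m^*\}\notin\mathcal{B}$ we have $|s|\geq 2$, so $n\in s$. But $\{n\}\in\mathcal{B}$ and $\{n\}\subsetneq s$ contradicts (B2), so $\{m\}\in\mathcal{B}$ for every $m\leq n$.

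Finally, to verify that $\mathcal{B}\setminus[\N]^1$ is a barrier on $\N/n$: any $s\in\mathcal{B}$ with $|s|\geq 2$ must satisfy $\min s>n$, since otherwise the previous step would give $\{\min s\}\in\mathcal{B}$ properly contained in $s$, violating (B2); this yields (B1), and (B2) is inherited from $\mathcal{B}$. For (B3), any $M\in[\N/n]^\omega$ produces, via (B3) for $\mathcal{B}$, some $s\sqsubseteq M$ in $\mathcal{B}$; since $\min s=\min M>n$ while the singletons of $\mathcal{B}$ all lie in $\{\{1\},\ldots,\{n\}\}$, we have $s\notin[\N]^1$, so $s\in\mathcal{B}\setminus[\N]^1$. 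I expect the first step to be the main conceptual hurdle: the trick is to stack a single copy of $t_1$ on top of infinitely many elements of $S$, forcing (B3) to return an $s$ that is either ``too short'' relative to $t$ or ``too long'' relative to some singleton in $S$.
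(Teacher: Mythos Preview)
Your proof is correct and follows essentially the same strategy as the paper's: for the finiteness of $\mathcal{B}\cap[\N]^1$ you, like the paper, start from a fixed non-singleton element of $\mathcal{B}$ and apply (B3) to an infinite set beginning with part of that element and continuing into the singleton-indices, forcing a (B2) violation; your second step is identical to the paper's. Two minor differences worth noting: your first step is slightly more streamlined---you place only $\min t$ in front of $M'$ and handle the dichotomy $|s|=1$ versus $|s|\geq 2$ directly, whereas the paper places the first $k-1$ coordinates of its non-singleton $s$ in front and argues that the resulting $t_i$ must have length at least $k$---and you supply an explicit verification that $\mathcal{B}\setminus[\N]^1$ is a barrier on $\N/n$, which the paper states but does not prove.
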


\begin{proof}
	Let $\mathcal{B}$ be a barrier on $\N$ such that $\mathcal{B}\neq[\N]^{1}$ and $\mathcal{B}\cap[\N]^{1}$ is an infinite set. Take $s=\{s(1),\ldots,s(k)\}\in\mathcal{B}\backslash[\N]^{1}$ with $k\geq 2$. By property \ref{Bsegmento} of barriers, for each $i\geq s(k)$ we get $t_i\in\mathcal{B}$ such that $t_i\sqsubseteq\{s(1),\ldots,s(k-1),i,i+1,\ldots\}$, and from property \ref{Bcontencion} it follows that $\{s(1),\ldots,s(k-1),i\}\sqsubseteq t_i$.
	Besides, since $\mathcal{B}\cap[\N]^{1}$ is infinite, there is a number $m\in\N$ so that $m>s(k)$ and $\{m\}\in\mathcal{B}$. Thus we have the contradiction $\{m\}\subseteq t_m$.
	%%%%%-------------------------------------
	
	Let $n:=\max \big(\bigcup (\mathcal{B}\cap[\N]^{1})\big)$ and suppose there is a number $m<n$ such that $\{m\}\notin\mathcal{B}$. Consider the infinite set $\{m,n,n+1,n+2,\ldots\}\subseteq\N$. By definition of barrier, there is an element $s\in\mathcal{B}$ such that $s\sqsubseteq\{m,n,n+1,n+2,\ldots\}$. We know that $s\neq\{m\}$, then $\{m,n\}\sqsubseteq s$ and so $\{n\}\subseteq s$. This contradiction completes the proof.
\end{proof}

Now we pay attention to the first main result.

\begin{teo}\label{Teo:BarOmk}
For each $k\in\N$ with $k\geq 2$ and barrier $\mathcal{B}$ on $\N$ we have the following assertions:
	\begin{enumerate}
		\item[${(1)}_k$]\label{BOKEst} The lexicographical rank of $\mathcal{B}$ is $\omega^k$ if only if $|s|\leq k$ for all $s\in\mathcal{B}$ and there exists an $n\in\N\cup\{0\}$ such that $[\N/n]^k\subseteq\mathcal{B}$.\\
		\item[${(2)}_k$]\label{BOKInt} If $rank(\mathcal{B})>\omega^k$, then the set
		\begin{equation*}
		\mathfrak{M}_k(\mathcal{B})=\big\{m\in\N:m=\min (s)~\text{for some}~s\in\mathcal{B}\cap[\N]^k\big\}~\text{is finite.}
		\end{equation*}
	\end{enumerate}
\end{teo}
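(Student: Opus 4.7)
The plan is to prove $(1)_k$ and $(2)_k$ by simultaneous induction on $k\geq 2$. The cases ``$k=1$'' are supplied by Corollary \ref{Cor:rank=omega} (identifying $[\N]^1$ as the unique barrier of rank $\omega$) and Lemma \ref{Lem:rank>omega} (showing that rank greater than $\omega$ forces finitely many singletons in $\mathcal{B}$). The unifying tool is the ordinal decomposition
\begin{equation*}
rank(\mathcal{B})=\sum_{m\in\N}\beta_m,
\end{equation*}
valid because in the lexicographic order on $\mathcal{B}$ the minimum of an element dictates its position, where $\beta_m=1$ if $\{m\}\in\mathcal{B}$ and $\beta_m=rank(\mathcal{B}_{\{m\}})$ otherwise; by clause 1.(b) of Lemma \ref{Lem:PropiedadesBarrera}, each $\mathcal{B}_{\{m\}}$ is itself a barrier on $\N/m$, and by Lemma \ref{Lem:rankBn} one has $\beta_m<rank(\mathcal{B})$.

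In the base case $k=2$, the backward implication of $(1)_2$ is routine: the hypotheses and clause 2 of Lemma \ref{Lem:PropiedadesBarrera} yield $\mathcal{B}\upharpoonright_{\N/n}=[\N/n]^2$, which has lex rank $\omega^2$, while the finitely many elements of $\mathcal{B}$ with minimum at most $n$ contribute only finite ordinals, absorbed by $\omega^2$. For the forward implication, Lemma \ref{Lem:s-rankB} forces $|s|\leq 2$ (else the rank would be $\geq\omega^3$); Lemma \ref{Lem:rank>omega} supplies $n_0$ bounding the singletons in $\mathcal{B}$; and then $\mathcal{B}\setminus[\N]^1\subseteq[\N/n_0]^2$ is a barrier on $\N/n_0$ that equals $[\N/n_0]^2$ by clause 2 of Lemma \ref{Lem:PropiedadesBarrera}.

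The hard part is $(2)_k$, which I approach contrapositively: assuming $\mathfrak{M}_k(\mathcal{B})$ infinite, deduce $rank(\mathcal{B})\leq\omega^k$. The point of entry is that $m\in\mathfrak{M}_k(\mathcal{B})$ if and only if $\mathcal{B}_{\{m\}}$ contains a set of size $k-1$; combining this with Lemma \ref{Lem:s-rankB} and the inductive hypotheses $(1)_{k-1}$ and $(2)_{k-1}$ controls the possible sizes of $\beta_m$ for $m\in\mathfrak{M}_k(\mathcal{B})$. The main obstacle is the ordinal-arithmetic bookkeeping: one must show that the ``front'' of the sum---indices $m$ at which $\beta_m$ exceeds $\omega^{k-1}$---is finite, since property \ref{Bcontencion} prevents a barrier with infinitely many size-$k$ members scattered across large indices from also hosting a sub-barrier $\mathcal{B}_{\{m\}}$ of rank exceeding $\omega^{k-1}$ at unboundedly many $m$. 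Once this is established, a routine ordinal calculation then yields $\sum\beta_m\leq\omega^{k-1}\cdot\omega=\omega^k$.

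Finally, the inductive step of the forward direction of $(1)_{k+1}$ follows by iteration of the companion part: Lemma \ref{Lem:s-rankB} caps $|s|\leq k+1$, then $(2)_j$ for $j=1,\ldots,k$ successively supplies thresholds $n_j$ above which no element of $\mathcal{B}$ of size $j$ occurs. The restriction $\mathcal{B}\upharpoonright_{\N/n_k}$ is thus contained in $[\N/n_k]^{k+1}$ and is a barrier on $\N/n_k$, so clause 2 of Lemma \ref{Lem:PropiedadesBarrera} gives equality with $[\N/n_k]^{k+1}$. The backward direction of $(1)_{k+1}$ parallels the computation carried out in the $k=2$ base.
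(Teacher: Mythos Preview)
Your overall strategy matches the paper's: simultaneous induction on $k$, with $(1)_k$ handled via Lemma~\ref{Lem:s-rankB} together with the previously established $(2)_j$'s, and $(2)_k$ handled via the containment obstruction \ref{Bcontencion}. The paper argues $(2)_k$ by direct contradiction (assuming both $rank(\mathcal{B})>\omega^k$ and $\mathfrak{M}_k(\mathcal{B})$ infinite), while you take the contrapositive and work through the decomposition $rank(\mathcal{B})=\sum_m\beta_m$; this is a cosmetic reorganization of the same idea.

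There is, however, a gap in your $(2)_k$ step as written. You assert that the set of $m$ with $\beta_m>\omega^{k-1}$ is \emph{finite}, and then that a ``routine ordinal calculation'' gives $\sum_m\beta_m\leq\omega^{k-1}\cdot\omega$. Finiteness alone is not enough: a single term $\beta_{m_0}>\omega^k$ would already make the sum too large, and nothing you have said bounds those finitely many exceptional $\beta_m$'s from above. What you need, and what the \ref{Bcontencion} argument you sketch actually delivers, is that this set is \emph{empty}. Indeed, if even one $m_0$ satisfies $\beta_{m_0}>\omega^{k-1}$, then applying the inductive hypotheses $(2)_1,\ldots,(2)_{k-1}$ to $\mathcal{B}_{\{m_0\}}$ produces a threshold $n$ beyond which every element of $\mathcal{B}_{\{m_0\}}$ has size at least $k$; choosing any $m'\in\mathfrak{M}_k(\mathcal{B})$ with $m'>n$ and an element $\{m',i_1,\ldots,i_{k-1}\}\in\mathcal{B}$, the barrier property for $\mathcal{B}_{\{m_0\}}$ yields $t\in\mathcal{B}_{\{m_0\}}$ with $\{m',i_1,\ldots,i_{k-1}\}\sqsubseteq t$, whence $\{m',i_1,\ldots,i_{k-1}\}\subsetneq\{m_0\}^{\frown}t\in\mathcal{B}$, violating \ref{Bcontencion}. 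This is precisely the paper's contradiction; once you strengthen ``finite'' to ``empty'', your ordinal calculation is indeed routine and the argument is complete.
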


\begin{proof}
	We proceed by induction on $k$. When $k=2$ we have that:
	\begin{itemize}
		\item[$(1)_2$] $(\Rightarrow)$ If there is an $s\in\mathcal{B}$ with $|s|>2$, then by Lemma \ref{Lem:s-rankB} we know that $rank(\mathcal{B})\geq\omega^3$. Therefore, $|s|\leq 2$ for all $s\in\mathcal{B}$. From this and Lemma \ref{Lem:rank>omega} it follows that $[\N/n]^2\subseteq\mathcal{B}$ where $n:=\max \big(\bigcup(\mathcal{B}\cap[\N]^{1})\big)$.\\
		$(\Leftarrow)$ Suppose that $|s|\leq 2$ for all $s\in\mathcal{B}$ and $[\N/n]^2\subseteq\mathcal{B}$ for some $n\in\N$. By hypothesis, the set $\mathcal{B}_{\{i\}}\subseteq[\N]^{<2}$ for each $i\leq n$, in other words, if $i\leq n$, then $\mathcal{B}_{\{i\}}$ is either a empty set or $\mathcal{B}_{\{i\}}=[\N/i]^1$. We assume, without loss of generality, that $\mathcal{B}_{\{i\}}\neq\emptyset$ for each $i\leq n$. This implies that the lexicographical rank of $\Gamma_{\{i\}}:=\{\{i\}^{\frown}s:s\in\mathcal{B}_{\{i\}}\}\subseteq\mathcal{B}$ is equal to  $rank(\mathcal{B}_{\{i\}})=\omega$ for every $i\leq n$. By properties of well-ordered sets and $\mathcal{B}=\{\Gamma_{\{i\}}\}_{i=1}^n\cup\{[\N/n]^2\}$, we get that
		\begin{equation*}
		rank(\mathcal{B})=\sum_{i=1}^n rank(\Gamma_{\{i\}})+rank([\N/n]^2)=\omega^2.
		\end{equation*}
		\item[$(2)_2$] Suppose that $rank(\mathcal{B})>\omega^2$ and $\mathfrak{M}_2(\mathcal{B})$ is an infinite set. First, we show that there is $\ell\in \N$ so that $\mathcal{B}_{\{\ell\}}$  has an element $s$ with $|s|\geq 2$. To prove this claim, we assume that $\mathcal{B}_{\{i\}}\subseteq[\N]^{<2}$ for each $i\in\N$. We know that $\mathcal{B}_{\{i\}}$ is the empty set or a barrier. By Lemma \ref{Lem:rank>omega}, without loss of generality we suppose that $\mathcal{B}_{\{i\}}\neq\emptyset$ for every $i\in\N$. Hence, $rank(\mathcal{B}_{\{i\}})=\omega$ for all $i\in\N$. From these equalities and some facts about well-ordered sets we obtain
		\begin{align*}
		rank(\mathcal{B})=\sum_{i=1}^{\infty}rank(\Gamma_{\{i\}})
		=\sum_{i=1}^{\infty}\omega=\omega^2,
		\end{align*}
		since, in the same way as the previous case, it is true that $rank(\Gamma_{\{i\}})=rank(\mathcal{B}_{\{i\}})$ for all $i\in\N $, but which contradicts the hypothesis. So we can choose $\ell\in\N$ such that there is $s\in\mathcal{B}_{\{\ell\}}$ with $|s|\geq 2$. By Lemma \ref{Lem:s-rankB}, we get that $rank(\mathcal{B}_{\ell})\geq\omega^2$ and from Lemma \ref{Lem:rank>omega} follows that the set $\mathfrak{M}_1(\mathcal{B}_{\{\ell\}})=\bigcup(\mathcal{B}_{\{\ell\}}\cap[\N]^{1})$ is finite. Therefore, if $n:=\max\mathfrak{M}_1(\mathcal{B}_{\{\ell\}})$, then $\mathcal{B}_{\{\ell\}}\upharpoonright_{\N/n}$ is a barrier on  $\N/n$ that has no elements with cardinality equal to $1$. On the other hand, since $\mathfrak{M}_2(\mathcal{B})$ is an infinite set, we find $m\in\mathfrak{M}_2(\mathcal{B})$ such that $m>n$. We take $i>m$ such that   $\{m,i\}\in\mathcal{B}\cap[\N]^{2}$ and consider the set $\{m,i,i+1,i+2,\ldots\}\subseteq\N/n$. By definition of barrier, we obtain $s\in\mathcal{B}_{\{\ell\}}$ such that $s\sqsubseteq \{m,i,i+1,i+2,\ldots\}$. Also, we know that $s\neq\{m\}$ because of the construction, so $\{m,i\}\sqsubseteq s$. Hence, ${\{\ell\}}^{\frown}s\in\mathcal{B}$ and $\{m,i\}\in\mathcal{B}$, which is a contradiction of property \ref{Bcontencion}.
	\end{itemize}

	Suppose that $(1)_j$ and $(2)_j$ hold for each $1<j\leq k$. Now, we will prove that the statements are valid for $k+1$.
	\begin{enumerate}
		\item[$(1)_{k+1}$] $(\Rightarrow)$ If there is $s\in\mathcal{B}$ with $|s|>k+1$, then by Lemma \ref{Lem:s-rankB} one gets that $rank(\mathcal{B})>\omega^{k+2}$. Therefore, $|s|\leq k+1$ for all $s\in\mathcal{B}$. Notice that $\mathfrak{M}_1(\mathcal{B}),\mathfrak{M}_2(\mathcal{B}),\ldots,\mathfrak{M}_k(\mathcal{B})$ are finite sets because of Lemma \ref{Lem:rank>omega} and $(2)_{j}$ for $2\leq j\leq k$. Hence $\mathfrak{M}_{k+1}(\mathcal{B})$ is a infinite set, which implies that $[\N/n]^{k+1}\subseteq\mathcal{B}$ where $n:=\max \big(\bigcup_{j\leq k}\mathfrak{M}_j(\mathcal{B})\big)$.\\
		$(\Leftarrow)$ Suppose that $|s|\leq k+1$ for all $s\in\mathcal{B}$ and $[\N/n]^{k+1}\subseteq\mathcal{B}$ for some $n\in\N$. By assumption, $\mathcal{B}_{\{i\}}\subseteq [\N]^{<k+1}$ for every $i\leq n$. Without loss of generality, we may assume that $\mathcal{B}_{\{i\}}\neq\emptyset$ for every $i\leq n$. From Lemma \ref{Lem:s-rankB} for each $i\leq n$ we have that $rank(\mathcal{B}_{\{i\}})\geq\omega^{d_i}$ where $d_i=\max\{|t|:t\in\mathcal{B}_{\{i\}}\}$. Notice that $d_i\leq k$ for all $i\leq n$, so $\omega^{d_i}\leq\omega^{k}$. Applying Lemma \ref{Lem:rank>omega} and $(2)_j$ for all $j<d_i$ we obtain that $\mathfrak{M}_1(\mathcal{B}_{\{i\}}),\ldots,\mathfrak{M}_{d_i-1}(\mathcal{B}_{\{i\}})$ are finite sets whenever $i\leq n.$ Therefore, for each $i\leq n$ the set $\mathfrak{M}_{d_i}(\mathcal{B}_{\{i\}})$ is infinite. For every $i\leq n$ we have that $rank(\mathcal{B}_{\{i\}})\leq\omega^{d_i}$ because of the contrapositive of $(2)_{d_i}$. Thus, we conclude that $rank(\mathcal{B}_{\{i\}})=\omega^{d_i}$ if $i\leq n$. Finally, by some properties of well-ordered sets and the equality $rank(\mathcal{B}_{\{i\}})=rank(\Gamma_{\{i\}})$ for each $i\leq n$, we get that
		\begin{align*}
			rank(\mathcal{B})&=\sum_{i\leq n}rank(\Gamma_{\{i\}})+ rank([\N/n]^{k+1})\\
			&=\sum_{i\leq n}\omega^{d_i}+\omega^{k+1}
			=\omega^{k+1}.
		\end{align*}
		%%%%%%%%%%%%%%%%%%%%%%%%%%%%%%%%%%%%%%%%%%%%%%%%%%%
		\item[$(2)_{k+1}$] Suppose that $rank(\mathcal{B})>\omega^{k+1}$ and $\mathfrak{M}_{k+1}(\mathcal{B})$ is an infinite set.
		We first show the existence of $\ell\in\N$ such that there is at least one element $s$ in $\mathcal{B}_{\ell}$ with $|s|>k+1$. Assume that $\mathcal{B}_{\{i\}}\subseteq [\N]^{<k+1}$ for all $i\in\N$. From Lemma \ref{Lem:s-rankB} we get that $rank(\mathcal{B})\geq\omega^{d}$ where $d=\max\{|s|:s\in\mathcal{B}\}\leq k+1$.
		Notice that $|s|\leq d$ for all $s\in\mathcal{B}$ and the families $\mathfrak{M}_{1} (\mathcal{B}),\ldots,  \mathfrak{M}_{d-1}(\mathcal{B})$ have finite cardinality according to Lemma \ref{Lem:rank>omega} and clauses $(2)_2,\ldots(2)_{d-1}$. Hence  $\mathfrak{M}_{d}(\mathcal{B})$ is an infinite set. From this, we deduce that $[\N/n']^{d}\subseteq\mathcal{B}$ with $n':=\max\big(\cup_{i\leq d-1}\mathfrak{M}_i(\mathcal{B})\big)$.
		As we can suppose that $(1)_{k+1}$ holds, we conclude that $rank(\mathcal{B})=\omega^d\leq\omega^{k+1}$, which is a contradiction.
		%%%%%----------------------------------
		So we find $\ell\in\N$ such that there exists $s\in\mathcal{B}$ with $|s|\geq k+1$. Then, from Lemma \ref{Lem:s-rankB} we get that $rank(\mathcal{B}_{\{\ell\}})\geq \omega^{k+1}$. The Lemma \ref{Lem:rank>omega} and the clauses $(2)_2,\ldots,(2)_k$ assure that $\mathfrak{M}_1(\mathcal{B}_{\{\ell\}}),\ldots,\mathfrak{M}_k(\mathcal{B}_{\{\ell\}})$ are finite sets. Hence $\mathcal{B}_{\{\ell\}}\upharpoonright_{\N/n}$ is a barrier on $\N/n$ such that $|s|\geq k+1$ for all $s\in\mathcal{B}_{\{\ell\}}\upharpoonright_{\N/n}$ where $n:=\max\big(\cup_{i\leq k}\mathfrak{M}_i(\mathcal{B}_{\{\ell\}})\big)>\ell$.
		%%%%%----------------------------------
		Since $\mathfrak{M}_{k+1}(\mathcal{B})$ is an infinite set, there is $m\in\mathfrak{M}_{k+1}(\mathcal{B})$ such that $m>n$. We take $i_1,\ldots,i_k>m$ such that $i_1<\ldots<i_k$ and $\{m,i_1,i_2,\ldots,i_{k}\}\in\mathcal{B}\cap[\N]^{k+1}$, and consider the infinite set $\{m,i_1,i_2,\ldots,i_{k},i_{k}+1,i_{k}+2,\ldots \}\subseteq\N/n$. By definition of barrier, we find  $s\in\mathcal{B}_{\{\ell\}}\upharpoonright_{\N/n}$ such that $s\sqsubseteq \{m,i_1,i_2,\ldots,i_{k},i_{k}+1,i_{k}+2,\ldots\}$. But since $|s|\geq k+1$, it follows that $\{m,i_1,i_2,\ldots,i_{k}\}\sqsubseteq s$. Thus ${\{\ell\}}^{\frown}s$ and $\{m,i_1,i_2,\ldots,i_{k}\}$ belong to $\mathcal{B}$, this contradicts the property \ref{Bcontencion} of barriers.
	\end{enumerate}
\end{proof}

The following corollary is useful at starting inductive arguments.

\begin{cor}\label{Cor:rankBar}
	If $\mathcal{B}$ is a barrier on $\N$ with $rank(\mathcal{B})<\omega^{\omega}$, then $rank(\mathcal{B})=\omega^k$ for some $k\in\N$. 
\end{cor}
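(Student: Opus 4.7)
The plan is to pin down the integer $k := \sup\{|s| : s \in \mathcal{B}\}$ and show $rank(\mathcal{B}) = \omega^k$ by a two-sided estimate built from the lemmas already developed.

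First I would check that $k$ is finite. If $|s|$ were unbounded as $s$ ranges over $\mathcal{B}$, then Lemma \ref{Lem:s-rankB} would give $rank(\mathcal{B}) \geq \omega^n$ for every $n \in \N$, hence $rank(\mathcal{B}) \geq \omega^{\omega}$, contradicting the hypothesis. So $k \in \N$. Picking any $s \in \mathcal{B}$ with $|s| = k$ and applying Lemma \ref{Lem:s-rankB} one more time yields the lower bound $rank(\mathcal{B}) \geq \omega^k$.

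The heart of the argument is the matching upper bound $rank(\mathcal{B}) \leq \omega^k$. For the trivial case $k = 1$ every element of $\mathcal{B}$ is a singleton; since \ref{Bsegmento} forces $\{n, n+1, \dots\}$ to have an initial segment in $\mathcal{B}$ and that segment must be $\{n\}$, one obtains $\mathcal{B} = [\N]^{1}$, whence $rank(\mathcal{B}) = \omega$ by Corollary \ref{Cor:rank=omega}. For $k \geq 2$ I would invoke the contrapositive of clause $(2)_k$ of Theorem \ref{Teo:BarOmk}: it suffices to show that $\mathfrak{M}_{k}(\mathcal{B})$ is infinite. To this end note that, because $|s| \leq k$ for all $s \in \mathcal{B}$ and because each tail $\{n, n+1, \dots\}$ has some initial segment in $\mathcal{B}$ (of minimum $n$), we get the partition-style identity
\[
\N \;=\; \bigcup_{j=1}^{k} \mathfrak{M}_{j}(\mathcal{B}).
\]
If $\mathfrak{M}_{k}(\mathcal{B})$ were finite, some $\mathfrak{M}_{j}(\mathcal{B})$ with $j < k$ would be infinite. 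The case $j = 1$ is ruled out by Lemma \ref{Lem:rank>omega}, which would force $\mathcal{B} = [\N]^{1}$ and contradict $k \geq 2$; for $2 \leq j < k$ the contrapositive of $(2)_j$ yields $rank(\mathcal{B}) \leq \omega^{j} < \omega^{k}$, contradicting the lower bound from Step~2. Therefore $\mathfrak{M}_{k}(\mathcal{B})$ is infinite, and the contrapositive of $(2)_k$ gives $rank(\mathcal{B}) \leq \omega^{k}$.

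The main obstacle is less a mathematical difficulty than a bookkeeping one: one has to split off the $k = 1$ case (where $(2)_k$ is not even stated) and be careful to use Lemma \ref{Lem:rank>omega} rather than $(2)_j$ when handling $j = 1$. Once that case analysis is set up correctly, the whole argument is a short assembly of Lemma \ref{Lem:s-rankB}, Lemma \ref{Lem:rank>omega}, Corollary \ref{Cor:rank=omega} and clause $(2)_k$ of Theorem \ref{Teo:BarOmk}.
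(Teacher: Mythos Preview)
Your proof is correct and uses the same ingredients as the paper: Lemma~\ref{Lem:s-rankB}, Lemma~\ref{Lem:rank>omega}, and clause $(2)_k$ of Theorem~\ref{Teo:BarOmk}, together with the observation that $\N=\bigcup_{j\le k}\mathfrak{M}_j(\mathcal{B})$. The only difference is organizational: the paper argues by contradiction, assuming $\omega^n<rank(\mathcal{B})<\omega^{n+1}$ for some $n$ and then showing that every $\mathfrak{M}_i(\mathcal{B})$ with $i\le n$ must be finite (via Lemma~\ref{Lem:rank>omega} and $(2)_i$), which is impossible since their union is $\N$; you instead proceed directly by first identifying $k=\sup\{|s|:s\in\mathcal{B}\}$ and then sandwiching $rank(\mathcal{B})$ between $\omega^k$ and $\omega^k$. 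Your route has the small advantage of explicitly naming the exponent $k$ in terms of the barrier, but mathematically the two arguments are the same.
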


\begin{proof}
	Assume that $\omega^{k}\neq rank(\mathcal{B})<\omega^{\omega}$ for all $k\in\N$.
	Then there is $n\in\N$ such that $\omega^n<rank(\mathcal{B})<\omega^{n+1}$. From Lemma \ref{Lem:s-rankB} follows that $|s|\leq n$ for all $s\in\mathcal{B}$. This implies that $\mathfrak{M}_i(\mathcal{B})$ is a infinite set for some $i\leq n$. However, according to Lema \ref{Lem:rank>omega} and Theorem \ref{Teo:BarOmk}, the sets $\mathfrak{M}_1(\mathcal{B}),\ldots,\mathfrak{M}_n(\mathcal{B})$ are finite.
\end{proof}

%%%%%%%%%%%%%%%%%%%%%%
\section{Ramsey Type Theorems}
%%%%%%%%%%%%%%%%%%%%%%%
%%%%%%%%%%%%%%%%%%%%%%%

We start this section with an important property of the Nash-Williams Theory that has a notable relationship with the Ramsey Theorem.

\begin{defin}{\bf [Nash-Williams].}
	A family $\mathcal{F}\subseteq FIN^*$ has {\it the Ramsey property} if for every coloring $\varphi:\mathcal{F}\longrightarrow \{1,\ldots,q\}$, where $q\in\N$, there is a set  $M\in[\N]^{\omega}$ such that at most one of the restrictions $$\varphi^{-1}(1)\upharpoonright_{M},\varphi^{-1}(2)\upharpoonright_{M},\ldots,\varphi^{-1}(q)\upharpoonright_{M}$$ is non-empty. In particular, $\mathcal{F}$ has {\it the non-empty Ramsey property} if there exists $M\in[\N]^{\omega}$ such that just one of the restrictions is non-empty.
\end{defin}

A proof of the well-known Ramsey property for Barriers lies in the book \cite[L. II.2.7]{ss}:

\begin{teo}{\bf [Ramsey Theorem on Barriers].} \label{Teo:BarrerasRamseyNoNula}
Every barrier has the non-empty Ramsey property.
\end{teo}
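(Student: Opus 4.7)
The plan is to prove the theorem by transfinite induction on the lexicographical rank of $\mathcal{B}$, which is a legitimate ordinal by Lemma \ref{Lem:OrdenLexicograficoBarrera}. For the base case, if $rank(\mathcal{B})=\omega$ then Corollary \ref{Cor:rank=omega} forces $\mathcal{B}=[\N]^1$; any colouring $\varphi:[\N]^1\to\{1,\ldots,q\}$ is a finite partition of $\N$, and the pigeonhole principle yields an infinite $M\in[\N]^{\omega}$ with $[M]^1\subseteq\varphi^{-1}(i)$ for a single $i$, so exactly one restriction is non-empty on $M$. (Recall that any barrier has rank at least $\omega$, since property \ref{Bsegmento} forces $\mathcal{B}$ to be infinite, and an infinite well-order has rank $\geq\omega$.)

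For the inductive step, by Lemma \ref{Lem:PropiedadesBarrera}.1.(c) I may assume $\mathcal{B}$ is a barrier on $\N$. Lemma \ref{Lem:rankBn} gives $rank(\mathcal{B}_{\{n\}})<rank(\mathcal{B})$ for every $n\in\N$ (whenever $\mathcal{B}_{\{n\}}\neq\emptyset$), so the inductive hypothesis is available on each tail. I would perform a Galvin-style fusion: set $M_{0}=\N$, and having constructed an infinite $M_{k-1}$, let $n_{k}=\min M_{k-1}$. If $\{n_{k}\}\in\mathcal{B}$, put $M_{k}:=M_{k-1}\setminus\{n_{k}\}$ and $c_{k}:=\varphi(\{n_{k}\})$. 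Otherwise, $\mathcal{B}_{\{n_k\}}\upharpoonright_{M_{k-1}/n_k}$ is a barrier on $M_{k-1}/n_k$ by Lemma \ref{Lem:PropiedadesBarrera}.1.(a) whose rank is at most $rank(\mathcal{B}_{\{n_k\}})<rank(\mathcal{B})$ (restriction cannot raise the lexicographic rank, as a subset of a well-order carries a suborder); apply the inductive hypothesis to this barrier colored by $t\mapsto\varphi(\{n_{k}\}\cup t)$ to obtain an infinite $M_{k}\subseteq M_{k-1}/n_{k}$ and a colour $c_{k}$ such that every element of $\mathcal{B}_{\{n_{k}\}}\upharpoonright_{M_{k}}$ receives colour $c_{k}$.

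Finally, I would apply the pigeonhole principle to the sequence $(c_{k})_{k\in\N}\in\{1,\ldots,q\}^{\N}$ to pick a colour $c$ occurring infinitely often, and set $M:=\{n_{k}:c_{k}=c\}\in[\N]^{\omega}$. To verify $\mathcal{B}\upharpoonright_{M}$ is $c$-monochromatic, take $s\in\mathcal{B}\upharpoonright_{M}$, write $n_{k}=\min(s)$, so $c_{k}=c$. If $s=\{n_{k}\}$ then $\varphi(s)=c_{k}=c$ by construction. Otherwise $s\setminus\{n_{k}\}\subseteq M\setminus\{n_{k}\}\subseteq M_{k}$ and $s\setminus\{n_k\}\in\mathcal{B}_{\{n_k\}}$ by definition, so $s\setminus\{n_k\}\in\mathcal{B}_{\{n_k\}}\upharpoonright_{M_k}$ and hence $\varphi(s)=c_{k}=c$.

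The step I expect to be the main obstacle is the rank-control bookkeeping in the fusion: guaranteeing that at every stage the barrier $\mathcal{B}_{\{n_k\}}\upharpoonright_{M_{k-1}/n_k}$ has rank strictly below $rank(\mathcal{B})$ so that the inductive hypothesis genuinely applies, and handling uniformly the two sub-cases $\{n_k\}\in\mathcal{B}$ versus $\{n_k\}\notin\mathcal{B}$ without losing the property that $\min(s)$ of any $s\in\mathcal{B}\upharpoonright_{M}$ sits inside our enumerated $\{n_{k}\}_{k\in\N}$. Once these two points are cleanly handled, the rest is the standard pigeonhole extraction.
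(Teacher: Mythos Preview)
Your proposal is correct. The paper does not give a direct proof of Theorem~\ref{Teo:BarrerasRamseyNoNula} at the point where it is stated (it refers to \cite[L.~II.2.7]{ss}), but essentially the same transfinite-induction-plus-fusion argument you outline appears later, in the proof of the implication $(1)\Rightarrow(2)$ of Theorem~\ref{TeoPrincipalBarreras}. The inductive step there is identical to yours: at stage $k$ one applies the inductive hypothesis to $\mathcal{B}_{\{m_k\}}$ restricted to the current tail (using Lemma~\ref{Lem:rankBn} for rank control), records the resulting colour $i_k$, and finally applies pigeonhole to the sequence of colours.

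The one genuine difference is in the base case. The paper takes as base all barriers of rank $<\omega^{\omega}$: by Corollary~\ref{Cor:rankBar} such a barrier has rank $\omega^{k}$ for some $k$, and by Theorem~\ref{Teo:BarOmk} it contains $[\N/n]^{k}$ for some $n$, whence the classical Ramsey Theorem~\ref{Teo:Ramsey} applies directly. You instead take as base only $rank(\mathcal{B})=\omega$, i.e.\ $\mathcal{B}=[\N]^{1}$, and handle it with the infinite pigeonhole principle alone. Your choice is more economical---it avoids invoking Theorem~\ref{Teo:BarOmk} and the full Ramsey Theorem in the base---and it yields the barrier Ramsey property from pigeonhole plus transfinite induction, rather than as a consequence of classical Ramsey. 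The paper's choice, on the other hand, is tailored to its purpose there, namely to exhibit $(1)\Rightarrow(2)$ as part of a cycle of equivalences with the classical theorem. Both routes are valid.
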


Our first objective is to generalize the previous theorem to blocks of barriers, which have been motivated by the relevance of the block sequences of a basic sequence in a Banach space. First of all, we introduce the notion of blocks of barriers.

\begin{defin}%Bloque Barrera
Given $k\in\N$ and barriers $\mathcal{B}_1,\ldots,\mathcal{B}_k $ on $N\in[\N]^{\omega}$, we define the set
	\begin{equation*}
	{Bl}(\mathcal{B}_1,\ldots,\mathcal{B}_k):=\left\{\{s_1,\ldots,s_k\} : \forall i\leq k~(s_i\in\mathcal{B}_i)~\text{y}~ s_1<s_2<\cdots<s_k\right\},
	\end{equation*}
and we refer to it as a {\it family of blocks of barriers in $(\mathcal{B}_i)_{i=1}^k$}. If $\mathcal{B}_1=\mathcal{B}_2=\cdots=\mathcal{B}_k$, then we denote by ${Bl}^k(\mathcal{B}_1)$ the set ${Bl}(\mathcal{B}_1,\ldots,\mathcal{B}_k)$. Further, the elements of $Bl(\mathcal{B}_1,\ldots,\mathcal{B}_k)$ are called $k-$blocks. 
\end{defin}

The desire generalization is the following.

\begin{teo}{\bf [Ramsey Theorem on Blocks of Barriers].} \label{Teo:RamseyBBarreras}%Teorema de Ramsey para k-bloques de barreras
Let $(\mathcal{B}_i)_{i=1}^k$ be a finite sequence of barriers on $N\in[\N]^{\omega}$ and let $q\in\N$. Then for every coloring
 $\varphi:{Bl}(\mathcal{B}_1,\ldots,\mathcal{B}_k)\rightarrow\{1,\ldots,q\}$, there are $i\leq q$ and $M\in[N]^{\omega}$ such that ${Bl}(\mathcal{B}_1\upharpoonright_{M},\ldots,\mathcal{B}_k\upharpoonright_{M})\subseteq\varphi^{-1}(i)$.
\end{teo}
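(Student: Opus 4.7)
The plan is to reduce the statement to the Ramsey Theorem on Barriers (Theorem \ref{Teo:BarrerasRamseyNoNula}) by exploiting the fact that blocks of barriers correspond canonically to the direct-sum barrier $\bigoplus_{i=1}^{k}\mathcal{B}_i$ provided by Lemma \ref{Lem:PropiedadesBarrera}.3. Concretely, I would define the map
\[
\Phi:{Bl}(\mathcal{B}_1,\ldots,\mathcal{B}_k)\longrightarrow \bigoplus_{i=1}^{k}\mathcal{B}_i,\qquad \Phi(\{s_1,\ldots,s_k\})=s_1{}^{\frown}s_2{}^{\frown}\cdots{}^{\frown}s_k,
\]
and pull back the coloring: set $\tilde\varphi(s):=\varphi(\Phi^{-1}(s))$ for $s\in\bigoplus_{i=1}^{k}\mathcal{B}_i$. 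Since $\bigoplus_{i=1}^{k}\mathcal{B}_i$ is a barrier on $N$ by Lemma \ref{Lem:PropiedadesBarrera}.3, Theorem \ref{Teo:BarrerasRamseyNoNula} produces some $i\leq q$ and some $M\in[N]^{\omega}$ with $(\bigoplus_{i=1}^{k}\mathcal{B}_i)\upharpoonright_{M}\subseteq\tilde\varphi^{-1}(i)$. The final step is to note that $(\bigoplus_{i=1}^{k}\mathcal{B}_i)\upharpoonright_{M}=\bigoplus_{i=1}^{k}(\mathcal{B}_i\upharpoonright_{M})$, so via $\Phi$ this translates directly into ${Bl}(\mathcal{B}_1\upharpoonright_{M},\ldots,\mathcal{B}_k\upharpoonright_{M})\subseteq\varphi^{-1}(i)$.

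For this reduction to make sense, the central technical point is to verify that $\Phi$ is well defined and injective, so that $\tilde\varphi$ is unambiguous. Surjectivity of $\Phi$ is immediate from the definition of $\bigoplus_{i=1}^{k}\mathcal{B}_i$. For injectivity, suppose $\Phi(\{s_1,\ldots,s_k\})=\Phi(\{t_1,\ldots,t_k\})=s$, and consider any infinite extension $M^{*}:=s\cup(N/\max s)\in[N]^{\omega}$. Both $s_1$ and $t_1$ are then initial segments of $M^{*}$ belonging to $\mathcal{B}_1$, so property \ref{Bcontencion} of the barrier $\mathcal{B}_1$ forces $s_1=t_1$. Iterating the same argument on the remaining suffix, I obtain $s_i=t_i$ for every $i\leq k$. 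This is the step I expect to be the most delicate, because it is where the barrier conditions (both the Sperner property \ref{Bcontencion} and the initial-segment property \ref{Bsegmento}) do all the real work; everything else is formal transportation.

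Once injectivity is settled, the monochromatic conclusion follows by transport: any $\{s_1,\ldots,s_k\}\in{Bl}(\mathcal{B}_1\upharpoonright_{M},\ldots,\mathcal{B}_k\upharpoonright_{M})$ satisfies $\Phi(\{s_1,\ldots,s_k\})\in(\bigoplus_{i=1}^{k}\mathcal{B}_i)\upharpoonright_{M}$, hence $\varphi(\{s_1,\ldots,s_k\})=\tilde\varphi(\Phi(\{s_1,\ldots,s_k\}))=i$. No additional combinatorial input is needed: the whole proof consists of the bijection lemma plus one application of Theorem \ref{Teo:BarrerasRamseyNoNula}. An alternative route would be a direct induction on $k$ using a fusion/diagonal argument that colors each $s_1\in\mathcal{B}_1$ by the color obtained from the inductive hypothesis applied to the tail coloring on ${Bl}(\mathcal{B}_2,\ldots,\mathcal{B}_k)$ above $s_1$, and then applies Theorem \ref{Teo:BarrerasRamseyNoNula} to $\mathcal{B}_1$, but the direct-sum reduction is both shorter and more in the spirit of the preparatory material already developed in Section 2.
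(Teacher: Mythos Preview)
Your proposal is correct and follows essentially the same route as the paper: define the bijection $\Phi$ from ${Bl}(\mathcal{B}_1,\ldots,\mathcal{B}_k)$ onto the barrier $\bigoplus_{i=1}^{k}\mathcal{B}_i$, transfer the coloring, apply Theorem \ref{Teo:BarrerasRamseyNoNula}, and use $(\bigoplus_{i=1}^{k}\mathcal{B}_i)\upharpoonright_{M}=\bigoplus_{i=1}^{k}(\mathcal{B}_i\upharpoonright_{M})$ to pull the monochromatic set back. You in fact supply more detail than the paper does on the injectivity of $\Phi$, which the paper dismisses as ``easy to verify''.
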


\begin{proof} 
	We consider the function $f:{Bl}(\mathcal{B}_1,\ldots,\mathcal{B}_k)\rightarrow\bigoplus^k_{i=1}\mathcal{B}_i$ defined by $f(\{s_1,\ldots,s_k\})={s_1}^{\frown}{s_2}^{\frown}{\cdots}^{\frown}s_k$ for each $\{s_1,\ldots,s_k\}\in{Bl}(\mathcal{B}_1,\ldots,\mathcal{B}_k)$. It is easy to verify that $f$ is a bijection, which implies the existence of its inverse $f^{-1}$. Now, fix a finite coloring $\varphi:{Bl}(\mathcal{B}_1,\ldots,\mathcal{B}_k)\longrightarrow \{1,2,\ldots,q\}$ and consider the coloring $\varphi\circ f^{-1}:\bigoplus^k_{i=1}\mathcal{B}_i\rightarrow \{1,\ldots,q\}$. From Lemma \ref{Lem:PropiedadesBarrera}$(3)$ and Theorem \ref{Teo:BarrerasRamseyNoNula} we find $i\leq q$ and $M\in[N]^{\omega}$ so that
	\begin{equation*}
	 (\bigoplus^k_{i=1}\mathcal{B}_i)\upharpoonright_M\subseteq(\varphi\circ f^{-1})^{-1}(i).
	\end{equation*}
	Since $(\bigoplus^k_{i=1}\mathcal{B}_i)\upharpoonright_M=\bigoplus^k_{i=1}(\mathcal{B}_i\upharpoonright_M)$ and $f$ is a bijection, we get that ${Bl}(\mathcal{B}_1\upharpoonright_{M},\ldots,\mathcal{B}_k\upharpoonright_{M})\subseteq\varphi^{-1}(i)$.
\end{proof}

In what follows, we will study the version for Analysts  of Theorem \ref{Teo:RamseyBBarreras} as it was done in the paper \cite[Th. 2.2.1]{Schlu} for the Ramsey Theorem.

%%%%%%%%%%%%caso analistas%%%%%%%%%%%
\begin{teo}{\bf [Ramsey Theorem on Blocks of Barriers for Analysts].}
\label{Teo:RamseyAnaBBarreras}
Let $(X,d)$ be a totally bounded metric space, let $k\in \N$ and let $(\mathcal{B}_i)_{i=1}^k$ be a sequence of barriers on $N\in[\N]^{\omega}$. For every function $Bl(\mathcal{B}_1,\ldots,\mathcal{B}_k) \longrightarrow X$ and $\varepsilon > 0$ there is $M \in  [N]^{\omega}$ such that
	\begin{equation*}
		d(\varphi(S),\varphi(T))<\varepsilon
	\end{equation*}
for each $S$, $T\in Bl(\mathcal{B}_1\upharpoonright_M,\ldots,\mathcal{B}_k\upharpoonright_M)$.
\end{teo}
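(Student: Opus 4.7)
The plan is to reduce this analytic statement to the purely combinatorial Ramsey Theorem on Blocks of Barriers (Theorem \ref{Teo:RamseyBBarreras}) by exploiting the total boundedness of $(X,d)$ to convert the map $\varphi$ with continuous target into a finite coloring of $Bl(\mathcal{B}_1,\ldots,\mathcal{B}_k)$. This is the standard ``finite-net'' translation between a Ramsey theorem and its version for analysts, mirroring the argument in \cite{Schlu}.

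First, since $(X,d)$ is totally bounded, for the given $\varepsilon>0$ I can cover $X$ by finitely many sets of diameter strictly less than $\varepsilon$. Concretely, pick a finite $(\varepsilon/3)$-net $\{x_1,\ldots,x_q\}\subseteq X$ and take $U_j:=B(x_j,\varepsilon/3)$, so that $\operatorname{diam}(U_j)\leq 2\varepsilon/3<\varepsilon$ for each $j\leq q$ and $X=\bigcup_{j=1}^{q}U_j$.

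Second, use this finite cover to define a coloring $\psi:Bl(\mathcal{B}_1,\ldots,\mathcal{B}_k)\longrightarrow\{1,\ldots,q\}$ by the rule
\begin{equation*}
\psi(S):=\min\{\,j\leq q:\varphi(S)\in U_j\,\},
\end{equation*}
which is well-defined since the $U_j$'s cover $X$.

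Third, apply Theorem \ref{Teo:RamseyBBarreras} to $\psi$: there exist $j_0\leq q$ and $M\in[N]^{\omega}$ such that $Bl(\mathcal{B}_1\upharpoonright_M,\ldots,\mathcal{B}_k\upharpoonright_M)\subseteq\psi^{-1}(j_0)$. For any $S,T\in Bl(\mathcal{B}_1\upharpoonright_M,\ldots,\mathcal{B}_k\upharpoonright_M)$ this forces $\varphi(S),\varphi(T)\in U_{j_0}$, so $d(\varphi(S),\varphi(T))\leq \operatorname{diam}(U_{j_0})<\varepsilon$, which is exactly the desired conclusion.

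The argument has no real obstacle once Theorem \ref{Teo:RamseyBBarreras} is in hand; the entire nontrivial combinatorial content (reduction to an ordinary barrier via $\bigoplus_{i=1}^k\mathcal{B}_i$ and Nash-Williams) has already been absorbed into Theorem \ref{Teo:RamseyBBarreras}. The only point worth being careful about is choosing the cover with diameter \emph{strictly} less than $\varepsilon$ (not merely $\leq\varepsilon$), which is why I use radius $\varepsilon/3$ rather than $\varepsilon/2$.
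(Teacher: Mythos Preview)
Your proof is correct and follows exactly the approach the paper indicates: the paper's own proof merely says ``the basic idea of the proof is the same as the one presented in the article \cite{Schlu}, which consists of coloring by finite covers,'' and you have carried this out in full by passing from the totally bounded target to a finite cover of small diameter, turning $\varphi$ into a finite coloring, and then invoking Theorem \ref{Teo:RamseyBBarreras}.
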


\begin{proof}
	The basic idea of the proof is the same as the one presented in the article \cite{Schlu}, which consists of coloring by finite covers.
\end{proof}

It is worth mentioning that the particular case when 
the sequence consists of a single barrier that is $[\N]^k $ (or the sequence $([\N]^1,\ldots,[\N]^1)$ of length $k$) and the metric space is compact is equivalent to the Ramsey Theorem for Analysts. It is for this reason that we decided to name Theorem \ref{Teo:RamseyAnaBBarreras} as the Ramsey Theorem on Blocks of Barriers for Analysts.

\medskip

The version of asymptoticity that is implicit in the article \cite[Th. 2.2.1]{Schlu} is reformulated next. 
Given the importance of the inductive argument used in the demonstration of this reformulation, we include a complete proof.

\begin{cor}\label{Cor:RamseyAnaBBarrera}
Let $(X,d)$ be a totally bounded metric space, let $k\in \N$ and let $(\mathcal{B}_i)_{i=1}^k$ be a sequence of barriers on $N\in[\N]^{\omega}$. For every coloring $\varphi :Bl(\mathcal{B}_1,\ldots,\mathcal{B}_k)\longrightarrow X$ and sequence $\varepsilon_{i}\searrow0$, there exists $M=\{m_i : i \in \N \}\in [\N]^{\omega}$ so that for every $S=\{s_1,\ldots,s_k\},$ $T=\{t_1,t_2\ldots,t_k\}\in Bl(\mathcal{B}_1\upharpoonright_M,$ $\ldots,\mathcal{B}_k\upharpoonright_M)$ we have that
	\begin{equation*}
		d(\varphi(S),\varphi(T))<\varepsilon _{\ell},
	\end{equation*}
whenever $\min(s_1\cup t_1)=m_{\ell}$ for some $\ell\in\N$.
\end{cor}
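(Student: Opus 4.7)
The plan is to run a standard diagonal/fusion argument powered by Theorem \ref{Teo:RamseyAnaBBarreras}. Recursively I will build a decreasing sequence of infinite sets $N\supseteq M_1'\supseteq M_2'\supseteq\cdots$ and read off the desired set $M=\{m_i:i\in\N\}$ as the sequence of minima.

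\textbf{Construction.} Set $M_0:=N$. Assuming $M_{\ell-1}\in[N]^{\omega}$ has been defined, apply Theorem \ref{Teo:RamseyAnaBBarreras} to the restricted coloring $\varphi\upharpoonright Bl(\mathcal{B}_1\upharpoonright_{M_{\ell-1}},\ldots,\mathcal{B}_k\upharpoonright_{M_{\ell-1}})$ with tolerance $\varepsilon_\ell$ to produce $M_\ell'\in[M_{\ell-1}]^{\omega}$ such that
$$
d(\varphi(S),\varphi(T))<\varepsilon_\ell \quad\text{for all}\ S,T\in Bl(\mathcal{B}_1\upharpoonright_{M_\ell'},\ldots,\mathcal{B}_k\upharpoonright_{M_\ell'}).
$$
Then define $m_\ell:=\min M_\ell'$ and $M_\ell:=M_\ell'\setminus\{m_\ell\}$. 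Because $M_\ell'\subseteq M_{\ell-1}$ and, inductively, $m_{\ell-1}\notin M_{\ell-1}$, the numbers $m_1<m_2<\cdots$ form a strictly increasing sequence, so $M:=\{m_i:i\in\N\}\in[N]^{\omega}$.

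\textbf{Verification.} Take arbitrary $S=\{s_1,\ldots,s_k\}$ and $T=\{t_1,\ldots,t_k\}$ in $Bl(\mathcal{B}_1\upharpoonright_M,\ldots,\mathcal{B}_k\upharpoonright_M)$ with $\min(s_1\cup t_1)=m_\ell$. Every element appearing in $S$ or $T$ lies in $M$ and is $\geq m_\ell$, hence belongs to $\{m_j:j\geq\ell\}$. A straightforward induction shows $\{m_j:j\geq\ell\}\subseteq M_\ell'$: indeed $m_\ell\in M_\ell'$ by definition, and for $j>\ell$ the element $m_j$ lies in $M_j'\subseteq M_{j-1}\subseteq\cdots\subseteq M_\ell\subseteq M_\ell'$. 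Therefore $S,T\in Bl(\mathcal{B}_1\upharpoonright_{M_\ell'},\ldots,\mathcal{B}_k\upharpoonright_{M_\ell'})$, and the choice of $M_\ell'$ yields $d(\varphi(S),\varphi(T))<\varepsilon_\ell$.

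The argument is essentially obstacle-free; the only delicate point is the bookkeeping in the recursion, namely that at step $\ell$ one peels off \emph{only} the minimum $m_\ell$ and keeps the rest of $M_\ell'$ available for future steps. This guarantees that the entire tail $\{m_j:j\geq\ell\}$ of $M$ lives inside $M_\ell'$, which is precisely what allows the oscillation bound at level $\ell$ to be inherited by all blocks whose starting barrier-element has its minimum at $m_\ell$.
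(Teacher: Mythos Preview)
Your proof is correct and follows essentially the same diagonal/fusion argument as the paper: both iterate Theorem \ref{Teo:RamseyAnaBBarreras} to produce a nested sequence of infinite sets, collect the minima to form $M$, and then observe that any pair of blocks whose first barrier-element starts at $m_\ell$ lies entirely in the $\ell$-th stage, where the $\varepsilon_\ell$-bound was secured. The only difference is notational (you separate $M_\ell'$ from $M_\ell=M_\ell'\setminus\{m_\ell\}$, whereas the paper passes directly to $M_{n+1}\in[M_n/m_n]^\omega$), and your verification that $\{m_j:j\geq\ell\}\subseteq M_\ell'$ is slightly more explicit than the paper's.
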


\begin{proof} 
	Consider a sequence $\varepsilon_{i}\searrow 0$ and a coloring $\varphi:Bl(\mathcal{B}_1,\ldots,\mathcal{B}_k) \longrightarrow X$. From Theorem \ref{Teo:RamseyAnaBBarreras} we obtain a set $M_{1} \in [N]^\omega$ such that $d(\varphi(S),\varphi(T))<\varepsilon_1$ for each $S,T\in Bl(\mathcal{B}_1\upharpoonright_{M_1},$ $\ldots,\mathcal{B}_k\upharpoonright_{M_1})$. We set $m_{1}:=\min(M_1)$. Suppose that for all $i<n$ with $n\in\N$ there is $M_{i+1}\in[M_{i}/m_{i}]^{\omega}$, where $m_{i}:=\min(M_{i})$, which satisfies that $d(\varphi(S),\varphi(T))<\varepsilon_{i+1}$ for all $S,T\in Bl(\mathcal{B}_1\upharpoonright_{M_{i+1}},\ldots,\mathcal{B}_k\upharpoonright_{M_{i+1}})$. Applying Theorem \ref{Teo:RamseyAnaBBarreras} to $\varphi\upharpoonright_{Bl(\mathcal{B}_1\upharpoonright_{M_{n}/m_n},\ldots,\mathcal{B}_k\upharpoonright_{M_{n}/m_n})}$ and  $\varepsilon_{n+1}$, we get a set $M_{n+1}\in[M_n/m_n]^{\omega}$ so that $d(\varphi(S),\varphi(T))<\varepsilon_{n+1}$ for every $S,T\in Bl(\mathcal{B}_1\upharpoonright_{M_{n+1}},\ldots,\mathcal{B}_k\upharpoonright_{M_{n+1}})$ and put $m_{n+1}:=\min(M_{n+1})$. By this procedure we generate a increasing sequence $(m_i)_{i\in\N}$ of natural numbers and a sequence $(M_i)_{i\in\N}$ of infinite subsets of $N$ which satisfy the following conditions:
	\begin{enumerate}
		\item[$\bullet$] For every $i\in\N$, the number $m_i:=\min (M_i)$.
		\item[$\bullet$]  $M_1\in[N]^{\omega}$ and $M_{i+1}\in [M_{i}/m_i]^{\omega}\subseteq [M_i]^{\omega}$ for each $i \in \N$.
		\item[$\bullet$]  For every $i\in\N$, $d(\varphi(S),\varphi(T))<\varepsilon_{i}$ holds whenever $S,T\in Bl(\mathcal{B}_1\upharpoonright_{M_{i}},\ldots,\mathcal{B}_k\upharpoonright_{M_{i}})$.
	\end{enumerate}
	Now, we take $M := \{ m_i : i \in \N \}$. Notice that for $S=\{s_1,\ldots,s_k\}$, $T=\{t_1,\ldots,t_k\}\in Bl(\mathcal{B}_1\upharpoonright_{M},\ldots,\mathcal{B}_k\upharpoonright_{M})$ we get that
	\begin{equation*}
	d(\varphi(S),\varphi(T))<\varepsilon _{\ell},
	\end{equation*}
	where  $m_{\ell}=\min(s_1\cup t_1)$, because of $S,T \in Bl(\mathcal{B}_1\upharpoonright_{M_{\ell}},\ldots,\mathcal{B}_k\upharpoonright_{M_{\ell}})$.
	Hence, $M$ is the required set.
\end{proof}

The Ramsey Theorem for Analysts, presented in the article \cite[Th.2.2.1]{Schlu}, is stated with a property of convergence. In analogy to this idea, we obtain the version of this theorem for blocks of a finite sequence of barriers. To understand this procedure well, we enunciate the following notion of convergence.

\begin{defin}\label{Def:Convergencia} %Convergencia Coloración
Let $(X,d)$ be a metric space, let $k \in \N$ and let $(\mathcal{B}_i)_{i=1}^k$ be a sequence of barriers on $N \in [\N]^\omega$. A coloring $\varphi:Bl(\mathcal{B}_1,\ldots,\mathcal{B}_k) \longrightarrow X$ converges to $x \in X$ if for each $\varepsilon > 0$ there is $\ell \in \N$ such that
	\begin{equation*}
		d(\varphi(S),x)<\varepsilon,
	\end{equation*}
for all $S=\{ s_1, \ldots ,s_k\} \in Bl(\mathcal{B}_1\upharpoonright_{N/\ell},\ldots,\mathcal{B}_k\upharpoonright_{N/\ell}) $. This converge is denoted by 
	\begin{equation*}
		\lim_{\substack{Bl(\mathcal{B}_1,\ldots,\mathcal{B}_k) \ni S   \longrightarrow \infty}}\varphi (S)= x.
	\end{equation*}
\end{defin}

The compactness of a metric space allows that every coloring (from a family of blocks of barriers to this metric space) has a convergent restriction.

\begin{cor}\label{Cor:ConverBBarrera} 
Let	$(K,d)$ be a compact metric space, let $k\in \N$ and let $(\mathcal{B}_i)_{i=1}^k$ be a sequence of barriers on $N\in[\N]^{\omega}$. For every coloring $\varphi: Bl(\mathcal{B}_1,\ldots,\mathcal{B}_k) \longrightarrow K$, there are $M \in [N]^\omega$ and $x \in K$ such that
	\begin{equation*}
		\lim_{\substack{Bl(\mathcal{B}_1\upharpoonright_M,\ldots,\mathcal{B}_k\upharpoonright_M)  \ni S \longrightarrow \infty}}\varphi (S) = x.
	\end{equation*}
\end{cor}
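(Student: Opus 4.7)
The plan is to reduce the existence of a convergent restriction to the Cauchy-type conclusion already packaged in Corollary \ref{Cor:RamseyAnaBBarrera}, and then use compactness (hence completeness) of $K$ to extract the limit. First I would feed the null sequence $\varepsilon_i = 1/i$ into Corollary \ref{Cor:RamseyAnaBBarrera} to obtain a set $M = \{m_i : i \in \N\} \in [N]^\omega$ such that, for every pair $S = \{s_1,\ldots,s_k\}$, $T = \{t_1,\ldots,t_k\} \in Bl(\mathcal{B}_1\upharpoonright_M,\ldots,\mathcal{B}_k\upharpoonright_M)$, one has $d(\varphi(S),\varphi(T)) < 1/\ell$ whenever $\min(s_1 \cup t_1) = m_\ell$. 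Since each $\mathcal{B}_i\upharpoonright_M$ is a barrier on $M$ (Lemma \ref{Lem:PropiedadesBarrera}), the family $Bl(\mathcal{B}_1\upharpoonright_M,\ldots,\mathcal{B}_k\upharpoonright_M)$ is nonempty; moreover, by restricting successively to tails $M/\max(s_1)$ etc., one can produce blocks whose first entry begins arbitrarily high in $M$.

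Next I would pick a sequence $(T^{(n)})_{n\in\N}$ of blocks in $Bl(\mathcal{B}_1\upharpoonright_M,\ldots,\mathcal{B}_k\upharpoonright_M)$ with the property that $\min(t^{(n)}_1) = m_{j_n}$ for a strictly increasing sequence $j_n \to \infty$. For $n < m$, we have $\min(t^{(n)}_1 \cup t^{(m)}_1) = m_{j_n}$, so the key estimate yields $d(\varphi(T^{(n)}),\varphi(T^{(m)})) < 1/j_n$, making $(\varphi(T^{(n)}))_{n\in\N}$ a Cauchy sequence in $K$. Compactness of $K$ implies completeness, so this sequence converges to some $x \in K$.

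Finally, I would verify convergence in the sense of Definition \ref{Def:Convergencia}. Given $\varepsilon > 0$, fix $\ell \in \N$ with $1/\ell < \varepsilon/2$, and then choose $n$ large enough that both $j_n \geq \ell$ and $d(\varphi(T^{(n)}),x) < \varepsilon/2$. For any $S = \{s_1,\ldots,s_k\} \in Bl(\mathcal{B}_1\upharpoonright_{M/m_\ell},\ldots,\mathcal{B}_k\upharpoonright_{M/m_\ell})$, the common minimum $\min(s_1 \cup t^{(n)}_1)$ equals some $m_p$ with $p \geq \ell$, so the main inequality gives $d(\varphi(S),\varphi(T^{(n)})) < 1/p \leq 1/\ell < \varepsilon/2$, and the triangle inequality produces $d(\varphi(S),x) < \varepsilon$. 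Taking the witness $m_\ell$ in the definition of convergence finishes the proof.

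I do not anticipate a genuine obstacle here: everything reduces to carefully matching the indexing of Corollary \ref{Cor:RamseyAnaBBarrera} (which speaks of $\min(s_1 \cup t_1) = m_\ell$) against the tail condition in Definition \ref{Def:Convergencia} (which speaks of $S \in Bl(\mathcal{B}_1\upharpoonright_{M/\ell'},\ldots)$), and the only non-trivial ingredient beyond the previous corollary is the completeness of compact metric spaces, used precisely once to turn the Cauchy sequence $(\varphi(T^{(n)}))_{n\in\N}$ into the limit point $x$.
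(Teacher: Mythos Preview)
Your proposal is correct and follows essentially the same strategy as the paper: apply Corollary~\ref{Cor:RamseyAnaBBarrera} to obtain $M$ with the Cauchy-type oscillation estimate, then invoke compactness of $K$ to produce the limit point $x$. The only cosmetic difference is that the paper gestures at the finite-intersection-property characterization of compactness, whereas you extract $x$ by building an explicit Cauchy sequence $(\varphi(T^{(n)}))_{n\in\N}$ and using completeness; both are routine ways to cash in compactness here, and your version is the more explicit of the two.
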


\begin{proof} 
 It follows from Corollary \ref{Cor:RamseyAnaBBarrera} and that a topological space if and only if every collection of closed sets with the finite intersection property has non-empty intersection.
\end{proof}

Corollary \ref{Cor:ConverBBarrera} is also known as the Ramsey Theorem on Blocks of Barriers for Analysts. It is easy to see that if we consider compact metric spaces, then  Theorem \ref{Teo:RamseyAnaBBarreras} and Corollary \ref{Cor:ConverBBarrera} are equivalent.

%%%%%%%%%%%%%%%%%%%%%%%%
%%%%%%%%%%%%%%%%%%%%%%%%%%%%%%%%%%%%%%%
\section{Block Oscillation Stability}
%%%%%%%%%%%%%%%%%%%%%%%%%%%%%%
	
The following notion is a slight modification of one that was introduced in the unpublished paper \cite{cg} and it is the main motivation of this paper. 	

\begin{defin}% Def Oscilacion estable
		Let $k\in\N$ and $\varepsilon>0$. A normalized sequence $(x_i)_{i \in \N}$ in a Banach space $(X,\|\cdot\|)$ is $(k,\varepsilon)$-oscillation stable if for each $s=\{s(1),\ldots,s(k)\}$,  $t=\{t(1),\ldots,t(k)\} \in [\N]^k$ we have that
	\begin{equation*}
		\big| \|\sum_{i =1}^{k}a_i x_{s(i)} \| - \|\sum_{i=1}^{k} a_i x_{t(i)} \| \big| < \varepsilon,
	\end{equation*}
for all $(a_i)_{i=1}^{k} \in [-1,1]^{k}$.
\end{defin}

The relevance of the previous property is its relation with the Ramsey Theorem, which will be exposed and proved immediately later in this section via a generalization of the preceding notion. 

\medskip

Given $k\in\N$, observe that the barrier $[\N]^k$ determines which block linear combinations are compared in the definition of $(k,\varepsilon)-$oscillation stable sequence. This motivates a generalization of the idea of $(k,\varepsilon)-$oscillation stability of a sequence by using a family of blocks of a finite sequence of barriers as we formalize it below.

\medskip

\begin{defin}\label{def:OscBEst}
	Let $k\in\N$, let $(\mathcal{B}_i)_{i=1}^k$ be a sequences of barriers on $\N$ and let $\varepsilon>0$.
	A normalized sequence $(x_i)_{i\in\N}$ in a Banach space $(X,\|\cdot\|)$ is $((\mathcal{B}_i)^k_{i=1},\varepsilon)-${\it block oscillation stable} if for every $S=\{s_1,\ldots,s_k\}$, $T=\{t_1,\ldots,t_k\}\in {Bl}(\mathcal{B}_1,\ldots,\mathcal{B}_k)$  we have that
	\begin{equation*}
	\big | \|\sum_{i=1}^{k}a_i\mathcal{X}(s_i)\|-\|\sum_{i=1}^{k}a_i\mathcal{X}(t_i)\|\big |<\varepsilon,
	\end{equation*}
	for all $(a_i)_{i=1}^k\in[-1,1]^k$. Particularly, a $((\mathcal{B}_i)^k_{i=1},\varepsilon)-$block oscillation stable sequence is called $(\mathcal{B},k,\varepsilon)-$\textit{block oscillation stable} sequence when $\mathcal{B}=\mathcal{B}_1=\ldots=\mathcal{B}_k$.
\end{defin}

Let us remark that if the sequence $(x_i)_{i\in\N}$
 is $([\N]^1,k,\varepsilon)-$block oscillation stable, then it is $(k,\varepsilon)-$oscillation stable.

\medskip

We will see in the following theorem that every normalized sequence contains a subsequence that satisfies the condition of Definition \ref{def:OscBEst}. For this purpose, we first describe the metric spaces that were crucial in the proof of the Brunel-Sucheston Theorem (Theorem \ref{Teo:BruSu}), according to the notes of Th. Schlumprecht \cite{Schlu}. Later we will study the behavior of certain colorings in these metric spaces.

\addvspace{\smallskipamount}

For each $k \in \N$, we consider the metric space
\begin{equation*}
	\mathcal{M}_k  = \{ \rho: \mathbb{R}^k \to [0,\infty) : \rho \ \text{is a norm and}~\forall i= 1,\ldots, k~(\rho(e_i)= 1)\}
\end{equation*}
whose metric is given by
\begin{equation*}
	d_k(\rho_1,\rho_2) = \sup \left\{ \big|\rho_1(a) - \rho_2(a)\big| : a=(a_1,\ldots,a_k) \in [-1,1]^k \right\}.
\end{equation*}
for every pair of norms $\rho_1, \rho_2 \in \mathcal{M}_k$.

A false assertion in one of the proofs of the Brunel-Sucheston Theorem (Theorem \ref{Teo:BruSu}) was ``for all $k\in\N$ the metric space $(\mathcal{M}_k,d_k)$ is compact''. In fact, we show next a simple counterexample of this, which we include since it lies in the unpublished article \cite{cg}.

\begin{exam}%Ejemplo M_k no es compacto
Let $(\mathcal{M}_2,d_2)$ be the previously defined metric space. We consider the sequence $(\|\cdot\|_{n})_{n\in\N}$ of norms over $\R^2$ defined for each $n\in\N$ by
\begin{align*}
\|a\|_n=\max\{|a_1-a_2|,\frac{1}{n}|a_2|\}
\end{align*}
for all $a=(a_1,a_2)\in\R^2$. It is not hard to see that $(\|\cdot\|_{n})_{n\in\N}$ is a Cauchy sequence. From this we deduce that $\{(\|a\|_{n})_{n\in\N}:a\in\R\}$ is uniformly Cauchy in $\R$. We define $\rho:\R^2\rightarrow[0,\infty)$ as $\rho(a)=\lim_{n \to \infty}\|a\|_{n}$ for all $a\in\R^2$. In fact, 
	\begin{align*}
		\rho(a)=\left\{\begin{array}{cl}
			|a_1-a_2|&\text{if}~a_1\neq a_2\\
			0& \text{if}~a_1=a_2
		\end{array}\right.
	\end{align*} 
for all $a\in\R^2$. By the definition, we have that $\lim\limits_{n\to\infty}\|\cdot\|_n=\rho$. However, $\rho$ is not a norm because $\rho((1,1))=0$. Hence $(\mathcal{M}_2,d_2)$ is not a compact metric space.\qed 
\end{exam}

However, we obtain compactness when we replace ``norm'' by ``seminorm'' as follows
\begin{equation*}
	\mathcal{N}_k = \{ \rho : \mathbb{R}^k \longrightarrow [0,\infty) : \rho \ \text{is a seminorm and}~\forall i= 1,\ldots,k~(\rho (e_i) = 1) \}.
\end{equation*}

We omit the proof of the following easy lemma which can be proved by using the completeness and the total boundedness of a compact metric space.

\begin{lem} %Compacidad de N_k
\label{CompactoM}
For every natural number $k$, the metric space $(\mathcal{N}_{k},d_k)$ is compact.
\end{lem}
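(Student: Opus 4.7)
The plan is to show $(\mathcal{N}_k,d_k)$ is complete and totally bounded, and then invoke the standard criterion that a complete totally bounded metric space is compact. The workhorse for both parts is a uniform Lipschitz estimate that I would establish first: for every $\rho \in \mathcal{N}_k$ and every $a,b \in \R^k$, subadditivity of $\rho$ together with the normalizations $\rho(e_i)=1$ gives
\[
|\rho(a)-\rho(b)| \leq \rho(a-b) \leq \sum_{i=1}^{k}|a_i-b_i|\,\rho(e_i) = \sum_{i=1}^{k}|a_i-b_i|,
\]
so every element of $\mathcal{N}_k$ is $1$-Lipschitz with respect to the $\ell^1$-norm on $\R^k$, and in particular is uniformly bounded above by $k$ on $[-1,1]^k$.

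For completeness, I would take a $d_k$-Cauchy sequence $(\rho_n)_{n\in\N}$ in $\mathcal{N}_k$. Since the supremum in $d_k$ is taken over $[-1,1]^k$, the real sequence $(\rho_n(a))_{n\in\N}$ is Cauchy for every $a\in[-1,1]^k$, and by the absolute homogeneity of seminorms it is then Cauchy for every $a\in\R^k$. Defining $\rho(a):=\lim_{n\to\infty}\rho_n(a)$, I would check that non-negativity, absolute homogeneity, subadditivity, and the equalities $\rho(e_i)=1$ all pass to the pointwise limit, placing $\rho$ in $\mathcal{N}_k$; the Cauchy condition then upgrades pointwise convergence on $[-1,1]^k$ to uniform convergence, i.e.\ to $d_k$-convergence. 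This is precisely the step where the relaxation from norms to seminorms is essential, as the example immediately preceding the lemma illustrates.

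For total boundedness, I would fix $\varepsilon>0$ and choose a finite grid $G\subseteq[-1,1]^k$ with $\ell^1$-mesh at most $\varepsilon/4$. The Lipschitz bound implies that the tuple $(\rho(x))_{x\in G}$ determines $\rho$ on $[-1,1]^k$ up to $d_k$-error $\varepsilon/2$; since each value of $\rho$ on $G$ lies in $[0,k]$, partitioning $[0,k]$ into finitely many subintervals of length at most $\varepsilon/4$ leaves only finitely many possible coarse profiles for $\rho|_G$, and picking one representative from each non-empty class yields a finite $\varepsilon$-net for $\mathcal{N}_k$. The real work is concentrated in the initial Lipschitz bound: once one has uniform boundedness and uniform equicontinuity on the compact cube $[-1,1]^k$, both completeness and total boundedness follow by routine Arzel\`a--Ascoli-style reasoning, and the only conceptual subtlety is to remember that positive-definiteness is precisely the axiom that may fail in the limit, which is why we must work in $\mathcal{N}_k$ rather than in $\mathcal{M}_k$.
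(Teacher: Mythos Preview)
Your proof is correct and follows exactly the approach the paper indicates: the authors omit the argument but state explicitly that it ``can be proved by using the completeness and the total boundedness of a compact metric space,'' which is precisely the two-part strategy you carry out. Your uniform Lipschitz estimate and the Arzel\`a--Ascoli-style reasoning are the natural way to fill in those details.
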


Notice that for every $k\in\N$ and $\rho\in\mathcal{N}_k $ we get that
$$\rho(a)\leq\sum_{i=1}^k\rho(a_ie_i)=\sum_{i=1}^{k}|a_i|= \leq \|a\|_{\ell_1}$$ for all $a=(a_1,\cdots, a_k)=\sum_{i=1}^ka_ie_i\in \mathbb{R}^k$.

\medskip

In the sequel, let us introduce the colorings that we will use in the next chapter:
\medskip

Let $(x_i)_{i \in \N}$  be a normalized sequence in a Banach space $(X, \|\cdot\|)$, let $k\in\N$ and let $(\mathcal{B}_i)_{i=1}^k$ be a sequence of barriers on $\N$. We define the coloring $\Psi_k:{Bl}(\mathcal{B}_1,\ldots,\mathcal{B}_k) \longrightarrow \mathcal{N}_k$, which is associated to $(x_i)_{i\in\N}$, by
\begin{equation*}
	\Psi_k(S)(\sum_{i=1}^{k}a_i e_i ):= \|\sum_{i=1}^{k}a_i \mathcal{X}(s_i)\| 
\end{equation*}
for every $S=\{s_1,\ldots,s_k\}\in{Bl}(\mathcal{B}_1,\ldots,\mathcal{B}_k)$ and $(a_i)_{i=1}^k \in [-1,1]^k$. For convenience, the sequences associated to the colorings $\Psi_k'$s will be kept implicit. In a very particular case, when $(\mathcal{B}_1,\ldots,\mathcal{B}_k)=([\N]^1,\ldots,[\N]^1)$, the coloring $\Psi_k:Bl^k([\N]^1)\longrightarrow \mathcal{N}_k$ coincides with the function $\psi_k: [\N]^k \longrightarrow \mathcal{N}_k$ given by 
\begin{align*}
\psi_k(s)(\sum_{i=1}^{k}a_i e_i ):= \|\sum_{i=1}^{k}a_ix_{s(i)}\|
\end{align*}
for each $s=\{s(1),\ldots,s(k)\}\in[\N]^k$ and $(a_i)_{i=1}^k \in [-1,1]^k$. Also, observe that for each $S\in Bl(\mathcal{B}_1,\ldots,\mathcal{B}_k)$  and $s\in[\N]^k$
\begin{align*}
&\|\sum_{i=1}^{k}a_i\mathcal{X}(s_i)\| \leq \sum_{i=1}^{k}\|a_i \mathcal{X}(s_i)\| \leq \sum_{i=1}^{k}|a_i| = \|\sum_{i=1}^{k}a_ie_i \|_{\ell_1}~\text{and}\\
&\|\sum_{i=1}^{k}a_ix_{s(1)}\|\leq \sum_{i=1}^{k}\|a_i x_{s(i)}\| \leq \sum_{i=1}^{k}|a_i| = \|\sum_{i=1}^{k}a_ie_i \|_{\ell_1},
\end{align*}
for all $(a_i)_{i=1}^k\in[-1,1]^k$

\medskip

The following lemma assures that every coloring $\Psi_k$ associated to a normalized basic sequence converges to a norm in $(\mathcal{N}_k,d_k)$ for all $k\in\N$.

\begin{lem}\label{lem:normaBk}%Convergencia de la Coloracion de SemiNormas
	Let be $(x_i)_{i \in \N}$ be a normalized basic sequence in a Banach space $(X,\|\cdot\|)$. For each
	$k\in\N$ and sequence of barriers $(\mathcal{B}_i)_{i=1}^k$ on $\N$, there are a norm  $\rho_k: \mathbb{R}^k \longrightarrow [0,\infty)$ and $M\in [\N]^\omega$ such that
	\begin{align*}
	\lim_{\substack{ {Bl}(\mathcal{B}_1\upharpoonright_M,\ldots,\mathcal{B}_k\upharpoonright_M)\ni S\longrightarrow \infty}}\Psi_k (S)= \rho_k,
	\end{align*}
this convergence is established within the compact metric space $(\mathcal{N}_k,d_k)$.
\end{lem}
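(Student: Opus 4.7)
The plan is to combine the convergence version of the Ramsey Theorem on Blocks of Barriers for Analysts (Corollary~\ref{Cor:ConverBBarrera}) with the definition of a basic sequence in order to pass from a seminorm limit to a genuine norm.

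First, I would check that $\Psi_k$ really does take values in the compact space $(\mathcal{N}_k,d_k)$. For every $S=\{s_1,\ldots,s_k\}\in Bl(\mathcal{B}_1,\ldots,\mathcal{B}_k)$ the map $\Psi_k(S):\R^k\to[0,\infty)$ is the norm $\sum a_i e_i\mapsto \|\sum_{i=1}^{k}a_i\mathcal{X}(s_i)\|_X$ restricted to viewing the block vectors $\mathcal{X}(s_1),\ldots,\mathcal{X}(s_k)$ as the canonical basis; since each $\mathcal{X}(s_i)$ is normalized, $\Psi_k(S)(e_i)=1$, so $\Psi_k(S)\in\mathcal{N}_k$. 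By Lemma~\ref{CompactoM} the space $(\mathcal{N}_k,d_k)$ is compact, so Corollary~\ref{Cor:ConverBBarrera} applied to $\Psi_k$ yields $M\in[\N]^\omega$ and $\rho_k\in\mathcal{N}_k$ with
\begin{equation*}
\lim_{\substack{Bl(\mathcal{B}_1\upharpoonright_M,\ldots,\mathcal{B}_k\upharpoonright_M)\ni S\longrightarrow\infty}}\Psi_k(S)=\rho_k
\end{equation*}
in the metric $d_k$. This already gives a seminorm $\rho_k$ with $\rho_k(e_i)=1$ for every $i\leq k$.

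The main obstacle is then to promote the seminorm $\rho_k$ to a norm, as the earlier example shows that a $d_k$-limit of norms need not be a norm in general. Here the basic sequence hypothesis is essential. Let $C$ be the basis constant of $(x_i)_{i\in\N}$. Since any sequence of successive block vectors $\mathcal{X}(s_1)<\cdots<\mathcal{X}(s_k)$ is a (normalized) block basic sequence of $(x_i)_{i\in\N}$, the basis constant estimate of the paper yields, for every $j\leq k$ and every $(a_i)_{i=1}^k\in[-1,1]^k$,
\begin{equation*}
|a_j|=\Bigl\|\sum_{i=1}^{j}a_i\mathcal{X}(s_i)-\sum_{i=1}^{j-1}a_i\mathcal{X}(s_i)\Bigr\|\leq 2C\,\Bigl\|\sum_{i=1}^{k}a_i\mathcal{X}(s_i)\Bigr\|=2C\,\Psi_k(S)\Bigl(\sum_{i=1}^{k}a_ie_i\Bigr).
\end{equation*}

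Passing to the limit along $Bl(\mathcal{B}_1\upharpoonright_M,\ldots,\mathcal{B}_k\upharpoonright_M)$ gives $|a_j|\leq 2C\,\rho_k(\sum_{i=1}^k a_i e_i)$ for every coordinate $j\leq k$ and every $a\in[-1,1]^k$; by the positive homogeneity of $\rho_k$ this extends to all $a\in\R^k$. Hence $\rho_k(a)=0$ forces $a=0$, so $\rho_k$ is a norm, which concludes the proof. No further combinatorial refinement of $M$ is needed: the set produced by Corollary~\ref{Cor:ConverBBarrera} is exactly the one that works.
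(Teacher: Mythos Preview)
Your proposal is correct and follows essentially the same path as the paper: apply Corollary~\ref{Cor:ConverBBarrera} to obtain $M$ and a seminorm limit $\rho_k\in\mathcal{N}_k$, then use the basis constant of $(x_i)_{i\in\N}$ (inherited by the block vectors $\mathcal{X}(s_i)$) to bound coordinates from above by $\Psi_k(S)$ and conclude that $\rho_k$ is a genuine norm. The only cosmetic difference is that the paper argues by contradiction using the \emph{first} nonzero coordinate $i_0$ and a single partial-sum estimate $|b_{i_0}|\le C\,\Psi_k(S)(b)$, whereas you bound every coordinate via two partial sums and get the constant $2C$; both arguments are standard and equivalent for the purpose at hand.
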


\begin{proof}
	Let $C$ be the basis constant of $(x_i)_{i\in\N}$. We fix $k\in\N$ and a sequence of barriers $(\mathcal{B}_i)_{i=1}^k$ on $\N$. According to Corollary \ref{Cor:ConverBBarrera}, we get $M\in [\N]^\omega$ and $\rho_k \in\mathcal{N}_k$ so that for each $\varepsilon > 0$ there is $\ell \in M$ such that
	\begin{align}\label{ConvergenciaLema}
	d_k(\Psi_k(S),\rho_k) = \sup \{ \big|  \|\sum_{i =1}^{k}a_i\mathcal{X}(s_i) \| - \rho_k\big(\sum_{i=1}^{k} a_i e_i \big) \big| : (a_i)_{i=1}^{k} \in [-1,1]^{k} \} < \frac{\varepsilon}{C}
	\end{align}
	for all $S=\{s_1,\ldots,s_k\}\in{Bl}(\mathcal{B}_1\upharpoonright_{M/\ell},\ldots,\mathcal{B}_k\upharpoonright_{M/\ell})$. It remains to verify that $\rho_k$ is a norm. Suppose that we find $(b_i)_{i=1}^{k} \in [-1,1]^{k}\setminus\{(0,\ldots,0)\}$ such that $\rho_k\big(\sum_{i=1}^{k} b_i e_i \big) = 0$. We set $i_0:=\min\{i\leq k: b_i\neq 0\}$ and fix $0<\varepsilon<|b_{i_0}|$. From basic sequence definition and (\ref{ConvergenciaLema}), it follows that
	\begin{align*}
	|b_{i_0}|=\|\sum_{i=1}^{i_0}b_{i}\mathcal{X}(s_i)\|\leq C\|\sum_{i =1}^{k}b_i\mathcal{X}(s_i)\|\leq  Cd_k(\Psi_k(S),\rho_k) < \varepsilon,
	\end{align*}
	for all $S\in{Bl}(\mathcal{B}_1\upharpoonright_{M/\ell},\ldots,\mathcal{B}_k\upharpoonright_{M/\ell})$, which is not possible. Hence $\rho_k$  is a norm.
\end{proof}

It is intuited that the previous lemma is implicitly in the notes \cite{Schlu} when considering the sequence $([\N]^1,\ldots,[\N]^1)$ of length $k$, where $k\in\N$.

\begin{cor}%Conv. de la Coloracion de SemiNormas
Let $(x_i)_{i \in \N}$ be a normalized basic sequence in a Banach space $(X,\|\cdot\|)$. For each $k \in \N$, there are $M\in[\N]^{\omega}$  and a norm $\rho_k: \mathbb{R}^k \longrightarrow [0,\infty)$ so that
	\begin{equation*}
		\lim_{\substack{s\in [M]^k \rightarrow \infty}}\psi_k (s)= \rho_k
	\end{equation*}
in the compact metric space $(\mathcal{N}_k,d_k)$. 
\end{cor}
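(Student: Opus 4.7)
The plan is to obtain this corollary as a direct specialization of Lemma \ref{lem:normaBk}. Fix $k\in\N$ and consider the sequence of barriers $(\mathcal{B}_i)_{i=1}^k = ([\N]^1,\ldots,[\N]^1)$, where $[\N]^1$ occurs $k$ times. Each $[\N]^1$ is trivially a barrier on $\N$, so Lemma \ref{lem:normaBk} applies and yields an infinite set $M\in[\N]^{\omega}$ together with a norm $\rho_k\in\mathcal{N}_k$ such that
\[
\lim_{{Bl}([\N]^1\upharpoonright_M,\ldots,[\N]^1\upharpoonright_M)\ni S\longrightarrow\infty}\Psi_k(S)=\rho_k
\]
in the compact metric space $(\mathcal{N}_k,d_k)$.

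Next I would identify this limit with the one in the statement. As already observed in the paragraph following the definition of $\Psi_k$, when $(\mathcal{B}_1,\ldots,\mathcal{B}_k)=([\N]^1,\ldots,[\N]^1)$ there is a canonical bijection between ${Bl}([\N]^1,\ldots,[\N]^1)$ and $[\N]^k$ sending $\{\{s(1)\},\ldots,\{s(k)\}\}$ to $s=\{s(1),\ldots,s(k)\}$; under this bijection one has $\mathcal{X}(\{s(i)\})=x_{s(i)}$, so $\Psi_k$ coincides with $\psi_k$. The restriction to $M$ becomes exactly $[M]^k$. Therefore the convergence produced by Lemma \ref{lem:normaBk} translates to
\[
\lim_{s\in[M]^k\longrightarrow\infty}\psi_k(s)=\rho_k
\]
in $(\mathcal{N}_k,d_k)$, and $\rho_k$ is a norm (not merely a seminorm) by Lemma \ref{lem:normaBk}.

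There is no real obstacle here; the work has been done in Lemma \ref{lem:normaBk}, whose proof combines Corollary \ref{Cor:ConverBBarrera} (to get a seminorm limit in the compact space $\mathcal{N}_k$) with the basis constant of $(x_i)_{i\in\N}$ (to upgrade the seminorm to a norm). The only step to make explicit in the write-up is the canonical identification between $Bl^k([\N]^1)$ and $[\N]^k$ and the corresponding identification $\Psi_k=\psi_k$ under this bijection, which is immediate from the definitions of $\Psi_k$ and $\psi_k$.
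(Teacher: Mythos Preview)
Your proposal is correct and follows exactly the approach the paper intends: the corollary is stated immediately after Lemma~\ref{lem:normaBk} with the remark that it is the special case $(\mathcal{B}_1,\ldots,\mathcal{B}_k)=([\N]^1,\ldots,[\N]^1)$, and the identification $Bl^k([\N]^1)\cong[\N]^k$, $\Psi_k=\psi_k$ you spell out is precisely what the paper has in mind.
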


We will see in the Theorem \ref{TeoPrincipalBarreras} that the Ramsey Theorem is equivalent to the following result and its corollary.

\begin{teo}\label{TeoBOscilacion}
Let $(x_i)_{i\in\N}$ be a normalized sequence in a Banach space $(X,\|\cdot\|)$. For each $k\in\N$, each sequence of barriers $(\mathcal{B}_i)_{i=1}^k$ on $\N$ and each $\varepsilon>0$, there is $M\in[\N]^{\omega}$ so that $(x_i)_{i\in M}$ is $((\mathcal{B}_i\upharpoonright_{M})_{i=1}^k,\varepsilon)-$block oscillation stable.
\end{teo}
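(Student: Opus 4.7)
The plan is to reduce this directly to the Ramsey Theorem on Blocks of Barriers for Analysts (Theorem \ref{Teo:RamseyAnaBBarreras}), using the coloring $\Psi_k$ into the metric space $\mathcal{N}_k$ constructed just above. Concretely, I would fix $k\in\N$, the sequence of barriers $(\mathcal{B}_i)_{i=1}^k$ on $\N$, and $\varepsilon>0$, and then consider the coloring
\begin{equation*}
\Psi_k:Bl(\mathcal{B}_1,\ldots,\mathcal{B}_k)\longrightarrow\mathcal{N}_k,\qquad \Psi_k(S)\bigl(\textstyle\sum_{i=1}^k a_i e_i\bigr)=\bigl\|\textstyle\sum_{i=1}^k a_i\mathcal{X}(s_i)\bigr\|
\end{equation*}
associated with the normalized sequence $(x_i)_{i\in\N}$.

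The second step is to observe that $(\mathcal{N}_k,d_k)$ is compact by Lemma \ref{CompactoM}, and therefore totally bounded. Hence Theorem \ref{Teo:RamseyAnaBBarreras} applies to $\Psi_k$: there exists $M\in[\N]^{\omega}$ such that
\begin{equation*}
d_k(\Psi_k(S),\Psi_k(T))<\varepsilon
\end{equation*}
for every $S,T\in Bl(\mathcal{B}_1\upharpoonright_M,\ldots,\mathcal{B}_k\upharpoonright_M)$.

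Finally, I would unwind the definition of $d_k$. Writing $S=\{s_1,\ldots,s_k\}$ and $T=\{t_1,\ldots,t_k\}$, the previous inequality reads
\begin{equation*}
\sup_{(a_i)\in[-1,1]^k}\Bigl|\bigl\|\textstyle\sum_{i=1}^k a_i\mathcal{X}(s_i)\bigr\|-\bigl\|\textstyle\sum_{i=1}^k a_i\mathcal{X}(t_i)\bigr\|\Bigr|<\varepsilon,
\end{equation*}
which is exactly the condition in Definition \ref{def:OscBEst} for the subsequence $(x_i)_{i\in M}$ (note that whenever $s_i\subseteq M$, the block vector $\mathcal{X}(s_i)$ only involves coordinates from $(x_i)_{i\in M}$, so this genuinely is a statement about the subsequence). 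Hence $(x_i)_{i\in M}$ is $((\mathcal{B}_i\upharpoonright_M)_{i=1}^k,\varepsilon)$-block oscillation stable, as required.

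I do not anticipate any real obstacle: all the work has been invested in establishing Theorem \ref{Teo:RamseyAnaBBarreras} and in showing the compactness of $\mathcal{N}_k$. The only point that needs a brief verification is that $\Psi_k$ really takes values in $\mathcal{N}_k$ (each $\Psi_k(S)$ is a seminorm on $\R^k$ normalized on the canonical basis, since $\|\mathcal{X}(s_i)\|=1$), which is immediate from the normalization in the definition of $\mathcal{X}(s)$.
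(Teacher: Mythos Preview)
Your proposal is correct and follows exactly the paper's approach: the paper's proof also consists of a single sentence applying Theorem \ref{Teo:RamseyAnaBBarreras} to the coloring $\Psi_k:Bl(\mathcal{B}_1,\ldots,\mathcal{B}_k)\to\mathcal{N}_k$ associated to $(x_i)_{i\in\N}$. Your write-up merely makes explicit the total boundedness of $\mathcal{N}_k$ (via Lemma \ref{CompactoM}) and the unwinding of $d_k$, which the paper leaves implicit.
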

\begin{proof}
	For each $k\in\N$, each sequence of barriers $(\mathcal{B}_i)_{i=1}^k$ on $\N$ and each $\varepsilon>0$, it follows directly from applying Theorem \ref{Teo:RamseyAnaBBarreras} to $\varepsilon$ and the coloring $\Psi_k:{Bl}(\mathcal{B}_1,\ldots,\mathcal{B}_k)\longrightarrow \mathcal{N}_k$ associated to $(x_i)_{i\in\N}$.
\end{proof}

The sequences related to the spreading models, introduced by Brunel and Sucheston, have an asymptotic property, this fact is highlighted in the article \cite{asym} with the so-called asymptotic models. For this reason, we want to emphasize that the property of block oscillation stable can be manifested asymptotically, by using blocks of a finite sequence of barriers, as we will describe below.

\begin{cor}\label{CorkBAsintoticidad}
	Let $k\in\N$ and let $(\mathcal{B}_i)^k_{i=1}$ be a sequence of barriers on $\N$. For each normalized sequence $(x_i)_{i\in\N}$ in a Banach space $(X,\|\cdot\|)$ and each $\varepsilon_i \searrow 0$ there is
	$M=\{m_i:i\in\N\}\in[\N]^{\omega}$ such that every pair $S=\{s_1,\ldots,s_k\}$, $T=\{t_1,\ldots,t_k\} \in Bl(\mathcal{B}_1\upharpoonright_M,\ldots,\mathcal{B}_k\upharpoonright_M)$ satisfies that 
		\begin{equation*}
		\big| \|\sum_{i=1}^{k}a_i \mathcal{X}(s_i) \| - \|\sum_{i=1}^{k} a_i \mathcal{X}(t_i) \| \big| < \varepsilon_{\ell},
		\end{equation*}
	where $\min(s_1\cup t_1)=m_{\ell}$, for all $(a_i)_{i=1}^{k} \in [-1,1]^{k}$.
\end{cor}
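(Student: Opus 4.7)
The plan is to deduce this directly from Corollary \ref{Cor:RamseyAnaBBarrera} applied to the coloring $\Psi_k$ associated to the sequence $(x_i)_{i\in\N}$, working inside the compact (hence totally bounded) metric space $(\mathcal{N}_k,d_k)$. The work is essentially a translation between the metric statement of Corollary \ref{Cor:RamseyAnaBBarrera} and the norm-inequality statement that appears in the conclusion.

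First I would fix $k\in\N$, the sequence $(\mathcal{B}_i)_{i=1}^k$ of barriers on $\N$, the normalized sequence $(x_i)_{i\in\N}$, and the null sequence $\varepsilon_i\searrow 0$. I would then consider the coloring $\Psi_k:Bl(\mathcal{B}_1,\ldots,\mathcal{B}_k)\longrightarrow\mathcal{N}_k$ associated to $(x_i)_{i\in\N}$, as defined just before Lemma \ref{lem:normaBk}. Since $(\mathcal{N}_k,d_k)$ is compact by Lemma \ref{CompactoM}, it is in particular totally bounded, so Corollary \ref{Cor:RamseyAnaBBarrera} applies to $\Psi_k$ with the given null sequence $\varepsilon_i\searrow 0$.

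This yields $M=\{m_i:i\in\N\}\in[\N]^{\omega}$ such that for every $S=\{s_1,\ldots,s_k\}$ and $T=\{t_1,\ldots,t_k\}$ in $Bl(\mathcal{B}_1\upharpoonright_M,\ldots,\mathcal{B}_k\upharpoonright_M)$, if $\min(s_1\cup t_1)=m_{\ell}$ for some $\ell\in\N$, then $d_k(\Psi_k(S),\Psi_k(T))<\varepsilon_{\ell}$. Unwinding the definition of $\Psi_k$ and of the metric $d_k$, this inequality reads
\begin{equation*}
\sup\left\{\big|\|\sum_{i=1}^{k}a_i\mathcal{X}(s_i)\|-\|\sum_{i=1}^{k}a_i\mathcal{X}(t_i)\|\big|:(a_i)_{i=1}^k\in[-1,1]^k\right\}<\varepsilon_{\ell},
\end{equation*}
which immediately gives the pointwise inequality claimed in the statement for every choice of $(a_i)_{i=1}^k\in[-1,1]^k$.

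There is no real obstacle: the combinatorial and metric content is already packaged inside Corollary \ref{Cor:RamseyAnaBBarrera}, and Lemma \ref{CompactoM} supplies the total boundedness needed to invoke it. The only care required is the bookkeeping between $\Psi_k$ viewed as an element of $\mathcal{N}_k$ and its values at vectors $\sum_{i=1}^k a_i e_i$, but this is immediate from the definitions of $\Psi_k$ and $d_k$ and needs no additional argument.
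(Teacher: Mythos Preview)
Your proof is correct. You take a slightly more direct route than the paper: instead of invoking Corollary \ref{Cor:RamseyAnaBBarrera} on $\Psi_k$, the paper repeats the diagonalization argument by hand, applying Theorem \ref{TeoBOscilacion} successively with tolerances $\varepsilon_1,\varepsilon_2,\ldots$ to produce a nested chain $M_1\supseteq M_2\supseteq\cdots$ and then setting $M=\{m_j:j\in\N\}$ with $m_j=\min(M_j)$. Since Theorem \ref{TeoBOscilacion} is itself obtained from Theorem \ref{Teo:RamseyAnaBBarreras} via $\Psi_k$, the paper is essentially re-running the proof of Corollary \ref{Cor:RamseyAnaBBarrera} in this specific situation; your approach recognizes that the diagonal argument is already packaged there and applies it once, which is cleaner and buys you a shorter argument with no loss of generality.
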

\begin{proof}
	Let $\varepsilon_i\searrow 0$.
	Inductively we will find a sequence $(M_j)_{j\in\N}$ of infinite subsets of $\N$ and a strictly increasing sequence $(m_j)_{j\in\N}$ in $\N$ that have the following properties:
	\begin{itemize}
		\item for each $j\in\N$, the subsequence $(x_{i})_{i\in M_j}$ is $((\mathcal{B}_i\upharpoonright_{M_j})_{i=1}^k,\varepsilon_j)-$block oscillation stable, and
		\item $M_1\in[\N]^{\omega}$ and $M_{j+1}\in[M_{j}/m_{j}]^{\omega}\subseteq[M_{j}]^{\omega}$ where $m_j:=\min(M_j)$ for every $j\in\N$.
	\end{itemize}
	Indeed, suppose that we have found a finite sequence $(M_j)_{j=1}^n$ of infinite subsets of $\N$ and a strictly increasing finite sequence $(m_j)_{j=1}^n$ in $\N$ that satisfy the two conditions listed above. Consider the normalized sequence $(x_i)_{i\in M_{n}/m_{n}}$ and $\varepsilon_{n+1}$. According to Theorem \ref{TeoBOscilacion}, we get
	$M_{n+1}\in[M_{n}/m_{n}]^{\omega}$ so that $(x_i)_{i\in M_{n+1}}$ is $((\mathcal{B}_i\upharpoonright_{M_{n+1}})_{i=1}^k,\varepsilon_{n+1})-$block oscillation stable, i.e., for every $S=\{s_1,\ldots,s_k\}$, $T=\{t_1,\ldots,t_k\}\in Bl(\mathcal{B}_1\upharpoonright_{M_{n+1}},\ldots,\mathcal{B}_k\upharpoonright_{M_{n+1}})$ one obtains that
	\begin{align*}
	\big|\|\sum_{i=1}^ka_ i\mathcal{X}(s_i)\|-\|\sum_{i=1}^ka_i\mathcal{X}(t_i)\|\big|<\varepsilon_{n+1},
	\end{align*}
	for all $(a_i)_{i=1}^k\in[-1,1]^k$. We propose $M:=\{m_j:j\in\N\}$ as the desired set. Indeed, we observe that if $S=\{s_1,\ldots,s_k\}$, $T=\{t_1,\ldots,t_k\}\in Bl(\mathcal{B}_1\upharpoonright_{M},\ldots,\mathcal{B}_k\upharpoonright_{M})$, then 
	\begin{align*}
	\big|\|\sum_{i=1}^ka_i\mathcal{X}(s_i)\|-\|\sum_{i=1}^ka_i\mathcal{X}(t_i)\|\big|<\varepsilon_{\ell}
	\end{align*}
	whenever $\min(s_1\cup t_1)=m_{\ell}$, for each $(a_i)_{i=1}^k\in[-1,1]^k$.
\end{proof}

Corollary \ref{CorkBAsintoticidad} motives the following block asymptotic oscillation stability property of a normalized sequence in a Banach space.

\begin{defin}
	Let $k$ be a natural number and $(\mathcal{B}_i)^k_{i=1}$ be a sequence of barriers on $\N$. A normalized sequence $(x_i)_{i \in \N}$ in a Banach space is $(\mathcal{B}_i)^k_{i=1}-$\textit{block asymptotic oscillation stable} if there exits $\varepsilon_i\searrow 0$ such that every $S=\{s_1,\ldots,s_k\}$, $T=\{t_1,\ldots,t_k\} \in Bl(\mathcal{B}_1,\ldots,\mathcal{B}_k)$ satisfy
	\begin{equation*}
	\big| \|\sum_{i=1}^{k}a_i \mathcal{X}(s_i) \| - \|\sum_{i=1}^{k} a_i \mathcal{X}(t_i) \| \big| < \varepsilon_{\min(s_1\cup t_1)}
	\end{equation*}
	for each $(a_i)_{i=1}^{k} \in [-1,1]^{k}$.
\end{defin}

The following is a condition that is equivalent to the notion of block asymptotic oscillation stability and facilitates its use in some cases.

\begin{lem}\label{OscBAsintEqui}
	Let $k\in\N$ and let $(\mathcal{B}_i)_{i=1}^k$ be a sequence of barriers on $\N$. For a normalized sequence $(x_i)_{i \in \N}$ in a Banach space $(X,\|\cdot\|)$ we have the following equivalent conditions:
	\begin{enumerate}
		\item The sequence $(x_i)_{i \in \N}$ is $(\mathcal{B}_i)^k_{i=1}-$block asymptotic oscillation stable.
		\item For each  $\varepsilon>0$ there is $n \in \N$ so that if $S=\{s_1,\ldots,s_k\}$, $T=\{t_1,\ldots, t_k\} \in Bl(\mathcal{B}_1\upharpoonright_{\N/n},\ldots,\mathcal{B}_k\upharpoonright_{\N/n} ) $, then
		\begin{equation*}
		\big| \|\sum_{i =1}^{k}a_i\mathcal{X}(s_i) \| - \|\sum_{i=1}^{k} a_i \mathcal{X}(t_i) \| \big| < \varepsilon
		\end{equation*}
		for all $(a_i)_{i=1}^{k} \in [-1,1]^{k}$.
	\end{enumerate}
\end{lem}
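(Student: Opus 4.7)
The plan is a direct verification of the two implications; the equivalence is purely a translation between a null-sequence formulation and a uniform-in-$\varepsilon$ formulation, and no Ramsey-type input beyond what already underlies Definition~\ref{def:OscBEst} is needed. The whole argument is combinatorial bookkeeping on real sequences.

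For $(1)\Rightarrow(2)$, I would fix $\varepsilon>0$ and invoke $\varepsilon_i\searrow 0$ to pick $n\in\N$ with $\varepsilon_i<\varepsilon$ for every $i>n$. Any $S=\{s_1,\ldots,s_k\}$ and $T=\{t_1,\ldots,t_k\}$ in $Bl(\mathcal{B}_1\upharpoonright_{\N/n},\ldots,\mathcal{B}_k\upharpoonright_{\N/n})$ satisfy $\ell:=\min(s_1\cup t_1)>n$, so the hypothesis of (1) yields $\big|\|\sum a_i\mathcal{X}(s_i)\|-\|\sum a_i\mathcal{X}(t_i)\|\big|<\varepsilon_\ell<\varepsilon$, which is exactly (2).

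For $(2)\Rightarrow(1)$, I would apply (2) iteratively to $\varepsilon=1/m$ ($m\in\N$) to obtain a strictly increasing sequence $n_1<n_2<\cdots$ in $\N$ such that every pair in $Bl(\mathcal{B}_1\upharpoonright_{\N/n_m},\ldots,\mathcal{B}_k\upharpoonright_{\N/n_m})$ satisfies the oscillation inequality with constant $1/m$. I would then set $\varepsilon_i:=2k+1$ for $i\leq n_1$ and $\varepsilon_i:=1/m$ for $n_m<i\leq n_{m+1}$; this sequence is clearly non-increasing and tends to zero. For $S,T\in Bl(\mathcal{B}_1,\ldots,\mathcal{B}_k)$ with $\ell:=\min(s_1\cup t_1)$, two cases arise: if $\ell\leq n_1$, the crude bound $\|\sum a_i\mathcal{X}(s_i)\|\leq\sum|a_i|\leq k$ (and likewise for $T$) gives the left-hand side $\leq 2k<\varepsilon_\ell$; if $\ell>n_1$, then $\ell\in(n_m,n_{m+1}]$ for a unique $m$, and the condition $\min(s_1),\min(t_1)\geq\ell>n_m$ places $S,T$ inside $Bl(\mathcal{B}_1\upharpoonright_{\N/n_m},\ldots,\mathcal{B}_k\upharpoonright_{\N/n_m})$, whence the left-hand side is $<1/m=\varepsilon_\ell$.

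The only point requiring care, and the closest thing to an obstacle, is the simultaneous non-increasingness and convergence to zero of the defined sequence; the constant initial tail $2k+1$ is chosen both to dominate the trivial bound on the left-hand side (handling $\ell\leq n_1$) and to sit above $1\geq 1/m$ for every $m\geq 1$ (preserving monotonicity across the jump at $n_1$). Beyond this small piece of bookkeeping, no further subtleties arise.
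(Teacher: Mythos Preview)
Your argument is correct and follows essentially the same two-step strategy as the paper (trivial $2k$ bound for small indices, iterated application of (2) along a null sequence for large indices). The one discrepancy is that the paper interprets $\varepsilon_i\searrow 0$ as \emph{strictly} decreasing---its proof adds carefully chosen correction terms $\frac{n_j+1-i}{2^{d_j}}$ precisely to achieve strict monotonicity---whereas your piecewise-constant sequence is only non-increasing; this is harmless and is fixed by adding, e.g., $2^{-i}$ to each of your $\varepsilon_i$.
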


\begin{proof}
	$(1)\Rightarrow(2).$ By hypothesis, there exists $\varepsilon_i\searrow0$ so that for every $S=\{s_1,\ldots,s_k\}$, $T=\{t_1,\ldots,t_k\} \in Bl(\mathcal{B}_1,\ldots,\mathcal{B}_k)$ we get that
	\begin{equation*}
	\big| \|\sum_{i=1}^{k}a_i \mathcal{X}(s_i) \| - \|\sum_{i=1}^{k} a_i \mathcal{X}(t_i) \| \big| < \varepsilon_{\min (s_1\cup t_1)}
	\end{equation*}
	for each $(a_i)_{i=1}^{k} \in [-1,1]^{k}$. Now we fix $\varepsilon>0$ and choose $n\in\N$ such that $\varepsilon_n\leq \varepsilon$. Hence, for each $S$, $T \in Bl(\mathcal{B}_1\upharpoonright_{\N/n},\ldots,\mathcal{B}_k\upharpoonright_{\N/n} ) $, we have that
	\begin{equation*}
	\big| \|\sum_{i =1}^{k}a_i\mathcal{X}(s_i) \| - \|\sum_{i=1}^{k} a_i \mathcal{X}(t_i) \| \big| < \varepsilon
	\end{equation*}
	for all $(a_i)_{i=1}^{k} \in [-1,1]^{k}$.
	%%%%%------------------------------------------
	
	$(2)\Rightarrow(1).$ First, we notice that if $S=\{s_1,\ldots,s_k\}$, $T=\{t_1,\ldots,t_k\}  \in Bl(\mathcal{B}_1,\ldots,\mathcal{B}_k)$, then
	\begin{align*}
	\big| \|\sum_{i =1}^{k}a_i\mathcal{X}(s_i) \| - \|\sum_{i=1}^{k} a_i \mathcal{X}(t_i) \| \big|
	&\leq \sum_{i=1}^k|a_i|\|\mathcal{X}(s_i)\|+\sum_{i=1}^k|a_i|\|\mathcal{X}(t_i)\|\\
	&=\sum_{i=1}^k|a_i|+\sum_{i=1}^k|a_i|\leq 2k
	\end{align*}
	for each $(a_i)_{i=1}^{k} \in [-1,1]^{k}$. By assumption, we find $n_1\in\N$ such that for every $S$, $T \in Bl(\mathcal{B}_1\upharpoonright_{\N/n_1},\ldots,\mathcal{B}_k\upharpoonright_{\N/n_1} ) $ the inequality
	\begin{equation*}
	\big| \|\sum_{i =1}^{k}a_i\mathcal{X}(s_i) \| - \|\sum_{i=1}^{k} a_i \mathcal{X}(t_i) \| \big| <\frac{1}{2}
	\end{equation*}
	holds for all $(a_i)_{i=1}^{k} \in [-1,1]^{k}$. For each $i\leq n_1$ we define
	\begin{align*}
	\varepsilon_i:=2k+\frac{n_1+1-i}{2^{n_1}}.
	\end{align*}
	From the construction, it follows that $(\varepsilon_{i})_{i\leq n_1}$ is strictly decreasing, $\varepsilon_{i}>2k$ for all $i\leq n_1$ and $\varepsilon_{n_1}=2k+\frac{1}{2^{n_{1}}}$. 
	By applying recursively the hypothesis, for every $j\in\N$ we obtain $n_j\in\N$ with $n_j>n_{j-1}$ such that if $S$, $T\in Bl(\mathcal{B}_1\upharpoonright_{\N/n_j},\ldots,\mathcal{B}_k\upharpoonright_{\N/n_j} )$, then
	\begin{equation*}
	\big| \|\sum_{i =1}^{k}a_i\mathcal{X}(s_i) \| - \|\sum_{i=1}^{k} a_i \mathcal{X}(t_i) \| \big| < \frac{1}{2^{j}}
	\end{equation*}
	for all $(a_i)_{i=1}^{k} \in [-1,1]^{k}$. Later, taking $d_j:=\max\{j,n_j-n_{j-1}\}$ for each $j\in\N/1$, we define 
	\begin{align*}
	\varepsilon_i:=\frac{1}{2^{j-1}}+\frac{n_j+1-i}{2^{d_j}}
	\end{align*}
	for every $i\in\N$ that satisfies the restriction $n_{j-1}<i\leq n_j$. It follows that $(\varepsilon_{i})_{i\leq n_j}$ is strictly decreasing, $\varepsilon_{i}>\frac{1}{2^{j-1}}$ for each $i\leq n_j $  and $\varepsilon_{n_j}=\frac{1}{2^{j-1}}+\frac{1}{2^{d_j}}$. Moreover, $(\varepsilon_i)_{i\in\N}$ is a strictly decreasing sequence that converges to zero. Now, we choose $S$, $T\in Bl(\mathcal{B}_1,\ldots,\mathcal{B}_k)$. If $\min (s_1\cup t_1)\leq n_1$, then
	\begin{align*}
	\big| \|\sum_{i=1}^{k}a_i \mathcal{X}(s_i) \| - \|\sum_{i=1}^{k} a_i \mathcal{X}(t_i) \| \big|<2k<\varepsilon_{\min(s_1\cup t_1)}
	\end{align*}
	for each $(a_i)_{i=1}^{k} \in [-1,1]^{k}$. In the opposite case, since $(n_j)_{j\in\N}$ is strictly increasing, there exists $\ell\in \N$ such that $\ell=\max\{j\in\N:n_j<\min(s_1\cup t_1)\}$ and $\min(s_1\cup t_1)\leq n_{\ell+1}$. From this it follows that $
	S$, $T \in Bl(\mathcal{B}_1\upharpoonright_{\N/n_{\ell}},\ldots,\mathcal{B}_k\upharpoonright_{\N/n_{\ell}})$ and
	\begin{align*}
	\big| \|\sum_{i=1}^{k}a_i \mathcal{X}(s_i) \| - \|\sum_{i=1}^{k} a_i \mathcal{X}(t_i) \| \big| <\frac{1}{2^{\ell}}<\varepsilon_{\min(s_1\cup t_1)},
	\end{align*}
	for each $(a_i)_{i=1}^{k} \in [-1,1]^{k}$. Hence, $(x_i)_{i\in\N}$ is $(\mathcal{B}_i)_{i=1}^k-$block asymptotic oscillation stable.
\end{proof}

Now, we introduce the notion of block asymptotic oscillation stable sequence using an infinite sequence of barriers on $\N$, which is similar to the notion of asymptoticity studied in \cite{asym}.

\begin{defin}
	Let $(\mathcal{B}_i)_{i\in\N}$ be a sequence of barriers on $\N$. A normalized sequence $(x_i)_{i \in \N}$ in a Banach space $(X,\|\cdot\|)$ is  $(\mathcal{B}_i)_{i\in\N}-$\textit{block asymptotic oscillation stable} if there exists a sequence $\varepsilon_i\searrow 0$ such that for each $k\in\N$ and  $S=\{s_1,\ldots,s_k\}$, $T=\{t_1,\ldots,t_k\}\in Bl(\mathcal{B}_1\upharpoonright_{\N/(k-1)},\ldots,\mathcal{B}_k\upharpoonright_{\N/(k-1)})$ one gets that
	\begin{equation*}
	\big|\|\sum_{j = 1}^k a_j\mathcal{X}(s_j) \| -\|\sum_{j = 1}^k a_j\mathcal{X}(t_j) \|\big| < \varepsilon_{\min (s_1\cup t_1)},
	\end{equation*}
	for all $(a_i)_{i=1}^{k} \in [-1,1]^{k}$.
\end{defin}

The reader who knows the Brunel-Sucheston Theorem (Th. \ref{Teo:BruSu}) may imagine where we are going to, a generalization of such theorem. To achieve this goal, we need the following theorem (after Theorem \ref{TeoPrincipalBarreras} 
it will not be difficult to see that this theorem is equivalent to Ramsey's Theorem).

\begin{teo}\label{TeoBAsintoticidad}
	Let $(\mathcal{B}_i)_{i\in\N}$ be a sequence of barriers on $\N$. If $(x_i)_{i\in\N}$ is a normalized sequence in a Banach space $(X,\|\cdot\|)$, then for each $\varepsilon_i\searrow 0$ there is $M=\{m_i:i\in\N\}\in[\N]^{\omega}$ such that for every $k\in\N$ and $S=\{s_1,\ldots,s_k\}$, $T=\{t_1,\ldots,t_k\}\in Bl(\mathcal{B}_1\upharpoonright_{M/m_{k-1}},$ $\ldots,\mathcal{B}_k\upharpoonright_{M/m_{k-1}})$ one has that
	\begin{equation*}
	\big|\|\sum_{i = 1}^k a_i\mathcal{X}(s_i) \| -\|\sum_{i = 1}^k a_i\mathcal{X}(t_i) \|\big| < \varepsilon_{\ell},
	\end{equation*}
	where $\min (s_1\cup t_1)=m_{\ell}$, for each $(a_i)_{i =1}^ k \in [-1,1]^k$. In other words, the subsequence $(x_i)_{i\in M}$ is $(\mathcal{B}_i\upharpoonright_{M})_{i\in\N}-$block asymptotic oscillation stable. 
\end{teo}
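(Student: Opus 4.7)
The plan is to use a diagonal (Cantor-style) extraction, iteratively applying Corollary \ref{CorkBAsintoticidad} at each finite level $k$ and then taking the diagonal of the resulting nested family.

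I will recursively construct infinite subsets $N_0\supseteq N_1\supseteq N_2\supseteq\cdots$ of $\N$ together with natural numbers $0=m_0<m_1<m_2<\cdots$ as follows. Set $N_0:=\N$. Given $N_{k-1}$ and $m_{k-1}$ for $k\geq 1$, I will apply Corollary \ref{CorkBAsintoticidad} to the length-$k$ sequence of barriers $(\mathcal{B}_i\upharpoonright_{N_{k-1}/m_{k-1}})_{i=1}^{k}$, to the normalized sequence $(x_i)_{i\in\N}$, and to the shifted null sequence $(\varepsilon_{j+k-1})_{j\in\N}$. This will produce an infinite set $N_k\subseteq N_{k-1}/m_{k-1}$ with increasing enumeration $\{n_j^{(k)}:j\in\N\}$ such that for every $S=\{s_1,\ldots,s_k\}$, $T=\{t_1,\ldots,t_k\}\in Bl(\mathcal{B}_1\upharpoonright_{N_k},\ldots,\mathcal{B}_k\upharpoonright_{N_k})$ and every $(a_i)_{i=1}^{k}\in[-1,1]^k$,
\[
\big|\|\sum_{i=1}^{k}a_i\mathcal{X}(s_i)\|-\|\sum_{i=1}^{k}a_i\mathcal{X}(t_i)\|\big|<\varepsilon_{j^*+k-1},
\]
where $n_{j^*}^{(k)}=\min(s_1\cup t_1)$. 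I will then set $m_k:=n_1^{(k)}=\min N_k$ and, finally, define $M:=\{m_k:k\in\N\}$.

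To verify the conclusion, I would fix $k\in\N$ and take any $S=\{s_1,\ldots,s_k\}$, $T=\{t_1,\ldots,t_k\}\in Bl(\mathcal{B}_1\upharpoonright_{M/m_{k-1}},\ldots,\mathcal{B}_k\upharpoonright_{M/m_{k-1}})$. The crucial observation is that $M/m_{k-1}=\{m_k,m_{k+1},\ldots\}\subseteq N_k$, since $m_{k+j}\in N_{k+j}\subseteq N_k$ for every $j\geq 0$ by the nested construction. Hence $S,T\in Bl(\mathcal{B}_1\upharpoonright_{N_k},\ldots,\mathcal{B}_k\upharpoonright_{N_k})$ and the level-$k$ estimate applies. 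Writing $\min(s_1\cup t_1)=m_\ell$ (necessarily $\ell\geq k$) and $m_\ell=n_{j^*}^{(k)}$, the elements $m_k,m_{k+1},\ldots,m_\ell$ are $\ell-k+1$ distinct members of $N_k$ not exceeding $m_\ell$, so $j^*\geq\ell-k+1$; monotonicity of $(\varepsilon_i)$ then yields $\varepsilon_{j^*+k-1}\leq\varepsilon_\ell$, which is exactly the desired bound.

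The delicate point will be the index bookkeeping: the enumeration of $N_k$ may contain elements strictly between consecutive $m_j$'s, so the position $j^*$ of $m_\ell$ in $N_k$ will in general strictly exceed $\ell-k+1$. Shifting the input null sequence by $k-1$ at step $k$ is exactly what compensates for this worst-case discrepancy between positions in $N_k$ and positions in $M$, and is what makes the diagonal set $M$ serve uniformly for all levels $k$ simultaneously.
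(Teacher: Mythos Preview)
Your argument is correct, but it follows a somewhat different route from the paper's. The paper invokes the simpler Theorem \ref{TeoBOscilacion} (a single $\varepsilon_j$ at stage $j$) rather than Corollary \ref{CorkBAsintoticidad}, obtaining $M_j\in[M_{j-1}/m_{j-1}]^\omega$ with $(x_i)_{i\in M_j}$ being $((\mathcal{B}_i\upharpoonright_{M_j})_{i=1}^j,\varepsilon_j)$-block oscillation stable; the diagonal $M=\{m_j\}$ is formed the same way. At verification, instead of your index-counting inequality $j^*\geq\ell-k+1$, the paper uses a \emph{padding trick}: given $S,T\in Bl(\mathcal{B}_1\upharpoonright_{M/m_{k-1}},\ldots,\mathcal{B}_k\upharpoonright_{M/m_{k-1}})$ with $\min(s_1\cup t_1)=m_\ell$, it extends them (via property \ref{Bsegmento}) to $\ell$-blocks $S\cup S',\,T\cup T'\in Bl(\mathcal{B}_1\upharpoonright_{M_\ell},\ldots,\mathcal{B}_\ell\upharpoonright_{M_\ell})$ and applies the level-$\ell$ stability with zero coefficients in the appended slots. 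Thus the paper trades a weaker recursive input for an extra barrier-extension step at the end, while your approach front-loads the work (asymptotic corollary plus the shifted null sequence $(\varepsilon_{j+k-1})_j$) so that the verification reduces to pure index bookkeeping with no need to enlarge the blocks.
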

\begin{proof}
	Let $(x_i)_{i\in\N}$ be a normalized sequence in a Banach space $(X,\|\cdot\|)$ and let $\varepsilon_i\searrow 0$. By Theorem \ref{TeoBOscilacion}, we get $M_1\in[\N]^{\omega}$ such that if $S=\{s_1\}$,~$T=\{t_1\}\in Bl(\mathcal{B}_1\upharpoonright_{M_1})$, then
	\begin{align*}
	\big| \|a\mathcal{X}(s_1)\|-\|a\mathcal{X}(t_1)\| \big| <\varepsilon_1
	\end{align*}
	for each $a\in[-1,1]$. We set $m_1:=\min (M_1)$.
	Recursively applying Theorem \ref{TeoBOscilacion}, we obtain a sequence $(M_j)_{j\in\N}$ of infinite subsets of $\N$ and a strictly increasing sequence $(m_j)_{j\in\N}$ in $\N$ so that:
	\begin{itemize}
		\item For every $j\in\N$ $M_{j+1}\in[M_{j}/m_{j}]^{\omega}$ where  $m_j:=\min(M_j)$.
		\item For each $j\in\N$ if $S=\{s_1,\ldots,s_j\}$, $T=\{t_1,\ldots,t_j\}\in Bl(\mathcal{B}_1\upharpoonright_{M_j},\ldots,\mathcal{B}_j\upharpoonright_{M_j})$, then we have 
		\begin{align*}
		\big| \|\sum_{i=1}^{j}a_i \mathcal{X}(s_i) \| - \|\sum_{i=1}^{j} a_i \mathcal{X}(t_i) \| \big| <\varepsilon_j,
		\end{align*}
		for each $(a_i)_{i=1}^{j} \in [-1,1]^{j}$.
	\end{itemize}
	We consider the set $M:=\{m_j:j\in\N\}$. Fix $k\in\N$ and $S=\{s_1,\ldots,s_k\}$, $T=\{t_1,\ldots,t_k\}\in Bl(\mathcal{B}_1\upharpoonright_{M/m_{k-1}},\ldots,\mathcal{B}_k\upharpoonright_{M/m_{k-1}})$. Let $\ell\in\N$ such that $m_{\ell}=\min(s_1\cup t_1)$. According property \ref{Bsegmento} of barriers, if it is necessary, we choose $S'=\{s'_i\in\mathcal{B}_i\upharpoonright_{M}:k+1\leq i\leq\ell\}$ and $T'=\{t'_i\in\mathcal{B}_i\upharpoonright_{M}:k+1\leq i\leq\ell\}$ such that $S\cup S',T\cup T'\in {Bl}(\mathcal{B}_1\upharpoonright_{M/m_{\ell-1}},\ldots,\mathcal{B}_{\ell}\upharpoonright_{M/m_{\ell-1}})$. 
	From this, it follows that
	\begin{align*}
	\big|\|\sum_{i = 1}^k a_i \mathcal{X}(s_i) \| -\|\sum_{i= 1}^k a_i \mathcal{X}(t_i)\| \big|
	=&\big|\|\sum_{i = 1}^{k} a_i \mathcal{X}(s_i)+ \sum_{i = k+1}^{\ell} 0 \mathcal{X}(s'_i) \|\\
	&-\|\sum_{i = 1}^{k} a_i \mathcal{X}(t_i)+ \sum_{i = k+1}^{\ell} 0 \mathcal{X}(t'_i)  \|\big|<\varepsilon_{\ell},
	\end{align*}		
	for each $(a_i)_{i =1}^k \in [-1,1]^k$.
	Thus $M$ is the required set.
\end{proof}

It is worth mentioning that the previous result is important because it ensures the existence of sequences with the property of block asymptotic oscillation stability concerning an infinite sequence of barriers. 

\medskip

As a particular case of Theorem \ref{TeoBAsintoticidad}, when we consider the sequence $([\N]^1,\ldots,[\N]^1,\ldots)$, we obtain the following known result.

\begin{cor}
	If $(x_i)_{i\in\N}$ is a normalized sequence in a Banach space $(X,\|\cdot\|)$, then for each $\varepsilon_i\searrow 0$ there is a subsequence $(x_{n_i})_{i\in\N}$ such that every $s$, $t\in FIN^{*}$ with $s(1)\geq|s|=k=|t|\leq t(1)$ satisfies
	\begin{equation*}
	\big|\|\sum_{j = 1}^k a_jx_{n_{s(j)}} \| -\|\sum_{j = 1}^k a_jx_{n_{t(j)}} \|\big| < \varepsilon_{\min\{s(1),t(1)\}},
	\end{equation*}
	for all $(a_i)_{i =1}^ k \in [-1,1]^k$.
\end{cor}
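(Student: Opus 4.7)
The plan is to deduce this corollary directly from Theorem \ref{TeoBAsintoticidad}, specialized to the constant sequence of barriers $\mathcal{B}_i=[\N]^1$ for every $i\in\N$. Two easy reductions make this transparent. First, each block in $Bl([\N]^1,\ldots,[\N]^1)$ (with $k$ copies) is forced to be a family of $k$ disjoint singletons, so such blocks are in canonical order-preserving bijection with $[\N]^k$. Second, normalization of $(x_i)_{i\in\N}$ gives $\mathcal{X}(\{j\})=x_j$ for every $j\in\N$, so block vectors indexed by singletons are simply the original vectors.

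Next, applying Theorem \ref{TeoBAsintoticidad} to this constant sequence of barriers and to the given $\varepsilon_i\searrow 0$, I obtain $M=\{m_i:i\in\N\}\in[\N]^\omega$, enumerated in increasing order, such that for every $k\in\N$ and every pair of $k$-blocks $S,T\in Bl([\N]^1\upharpoonright_{M/m_{k-1}},\ldots,[\N]^1\upharpoonright_{M/m_{k-1}})$ the block-oscillation estimate holds with tolerance $\varepsilon_\ell$, where $m_\ell=\min(s_1\cup t_1)$.

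Setting $n_i:=m_i$, I would then translate. Given $s,t\in FIN^*$ with $s(1)\geq|s|=k=|t|\leq t(1)$, every index $s(j)$ and $t(j)$ satisfies $s(j),t(j)\geq k$, so $n_{s(j)}=m_{s(j)}\geq m_k>m_{k-1}$ and analogously for $t$. Hence $\{\{m_{s(j)}\}\}_{j=1}^k$ and $\{\{m_{t(j)}\}\}_{j=1}^k$ both belong to $Bl([\N]^1\upharpoonright_{M/m_{k-1}},\ldots,[\N]^1\upharpoonright_{M/m_{k-1}})$. Taking $\ell:=\min(s(1),t(1))$ yields $\min(\{m_{s(1)}\}\cup\{m_{t(1)}\})=m_\ell$, so Theorem \ref{TeoBAsintoticidad} gives exactly the desired inequality for the subsequence $(x_{n_i})_{i\in\N}$.

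There is no real obstacle here; the argument is pure bookkeeping. The only minor subtlety is the boundary case $k=1$, where $M/m_{k-1}$ must be interpreted as $M/0=M$ so that every singleton in $M$ qualifies, consistent with the fact that the hypothesis $s(1)\geq 1$ is automatic.
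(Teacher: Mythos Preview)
Your proposal is correct and follows exactly the approach the paper indicates: the paper presents this corollary as the particular case of Theorem~\ref{TeoBAsintoticidad} for the constant sequence of barriers $\mathcal{B}_i=[\N]^1$, and you have simply written out the bookkeeping (the identification of $Bl^k([\N]^1)$ with $[\N]^k$, the reduction $\mathcal{X}(\{j\})=x_j$, and the index translation $n_i=m_i$) that makes this specialization explicit.
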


%Equivalencia

One of our main contributions in this paper is the following theorem that lists conditions that are equivalent to the Ramsey Theorem. It must be mentioned that the particular case of this theorem for the sequence $([\N]^1,\ldots,[\N]^1)$ of length $k$ and the sequence $([\N]^1,\ldots,[\N]^1,\ldots)$ can be found in the article \cite{cg}.

\begin{teo}\label{TeoPrincipalBarreras}
	The following assertions are equivalent:
	\begin{enumerate}[label={(\arabic*)}]
		\item  The Ramsey Theorem (Th.\ref{Teo:Ramsey}).
		\item  The Ramsey Theorem on Barriers (Th. \ref{Teo:BarrerasRamseyNoNula}).
		\item  The Ramsey Theorem on Blocks of Barriers (Th. \ref{Teo:RamseyBBarreras}).
		\item The Ramsey Theorem on Blocks of Barriers for Analysts (Th. \ref{Teo:RamseyAnaBBarreras}).
		\item  Let $(x_i)_{i\in\N}$ be a normalized sequence in a Banach space $(X,\|\cdot\|)$ and let $k\in\N/1$. For each sequence of barriers $(\mathcal{B}_i)_{i=1}^k$ on $\N$ and $\varepsilon>0$, there exists $M\in[\N]^{\omega}$ so that $(x_i)_{i\in M}$ is $((\mathcal{B}_i\upharpoonright_{M})_{i=1}^k,\varepsilon)-$block oscillation stable.
		\item Let $(x_i)_{i\in\N}$ be a normalized sequence in a Banach space $(X,\|\cdot\|)$ and let $k\in\N/1$. For each sequence of barriers $(\mathcal{B}_i)_{i=1}^k$ on $\N$, there is $M\in[\N]^{\omega}$ so that $(x_i)_{i\in M}$ is $(\mathcal{B}_i\upharpoonright_{M})_{i=1}^k-$block asymptotic oscillation stable.
		\item Let $(x_i)_{i\in\N}$ be a normalized sequence in a Banach space $(X,\|\cdot\|)$. For each sequence of barriers $(\mathcal{B}_i)_{i\in\N}$ on $\N$, there exist $M\in[\N]^{\omega}$ so that $(x_i)_{i\in M}$ is $(\mathcal{B}_i\upharpoonright_{M})_{i\in \N}-$block asymptotic oscillation stable.
	\end{enumerate}
\end{teo}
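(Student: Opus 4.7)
My plan is to close a cycle of implications $(1) \Rightarrow (2) \Rightarrow (3) \Rightarrow (4) \Rightarrow (5) \Rightarrow (6) \Rightarrow (7) \Rightarrow (5) \Rightarrow (1)$. The forward arrows in this chain have already been established in the paper: $(1) \Rightarrow (2)$ is the Ramsey Theorem on Barriers (Theorem \ref{Teo:BarrerasRamseyNoNula}), while $(2) \Rightarrow (3)$, $(3) \Rightarrow (4)$, $(4) \Rightarrow (5)$, $(5) \Rightarrow (6)$, and $(6) \Rightarrow (7)$ follow respectively from Theorem \ref{Teo:RamseyBBarreras}, Theorem \ref{Teo:RamseyAnaBBarreras}, Theorem \ref{TeoBOscilacion}, Corollary \ref{CorkBAsintoticidad}, and Theorem \ref{TeoBAsintoticidad}. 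Thus the work reduces to proving the reverse implications $(7) \Rightarrow (6) \Rightarrow (5) \Rightarrow (1)$.

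To pass from (7) to (6), given a finite sequence $(\mathcal{B}_i)_{i=1}^k$ of barriers I will extend it to an infinite sequence by putting $\mathcal{B}_i := [\N]^1$ for $i > k$ and apply (7) to obtain $M \in [\N]^\omega$ making $(x_i)_{i \in M}$ be $(\mathcal{B}_i \upharpoonright_M)_{i \in \N}$-block asymptotic oscillation stable. Restricting the defining inequality to $k$-blocks involving only the first $k$ barriers yields exactly (6). To pass from (6) to (5), for a given $\varepsilon > 0$ I will take from (6) a set $M$ and a sequence $\varepsilon_i \searrow 0$, select $n_0$ with $\varepsilon_n < \varepsilon$ for every $n > n_0$, and set $M' := M/n_0$. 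Any pair $S = \{s_1,\ldots,s_k\}, T = \{t_1,\ldots,t_k\} \in Bl(\mathcal{B}_1 \upharpoonright_{M'}, \ldots, \mathcal{B}_k \upharpoonright_{M'})$ then satisfies $\min(s_1 \cup t_1) > n_0$, so the error bound $\varepsilon_{\min(s_1 \cup t_1)} < \varepsilon$ from the block asymptotic oscillation stability gives the $((\mathcal{B}_i \upharpoonright_{M'})_{i=1}^k,\varepsilon)$-block oscillation stability required by (5).

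For the last implication $(5) \Rightarrow (1)$, I will specialize (5) to the sequence of barriers $([\N]^1,\ldots,[\N]^1)$ of length $k$. Under this specialization a $k$-block $\{s_1,\ldots,s_k\}$ with each $s_i \in [\N]^1$ is identified canonically with the element $\{\min s_1,\ldots,\min s_k\} \in [\N]^k$, and the notion of $((\mathcal{B}_i)_{i=1}^k,\varepsilon)$-block oscillation stability collapses to the ordinary $(k,\varepsilon)$-oscillation stability. The equivalence of this latter property, taken over every $k$, every $\varepsilon$, and every normalized sequence, with the Ramsey Theorem is precisely what has been established in \cite{cg}, and it closes the loop. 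The main obstacle is this final step: encoding an arbitrary finite coloring of $[\N]^k$ into a normalized basic sequence whose block oscillations distinguish colors requires a delicate construction, but I rely on its treatment in \cite{cg}; the rest of the equivalences are routine specializations and re-indexings of the results proved earlier in this paper.
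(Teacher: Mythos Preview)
Your cycle is essentially the paper's, and the overall logic is sound, but three points deserve comment.

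First, the paper does not simply cite Theorem~\ref{Teo:BarrerasRamseyNoNula} for $(1)\Rightarrow(2)$; it gives an explicit transfinite induction on the lexicographical rank of the barrier, using Corollary~\ref{Cor:rankBar}, Theorem~\ref{Teo:BarOmk} and Lemma~\ref{Lem:rankBn} to reduce to the classical Ramsey theorem at each stage. Your appeal to the statement of Theorem~\ref{Teo:BarrerasRamseyNoNula} is acceptable only because the proof in \cite{ss} itself rests solely on (1); the paper chose instead to make this deduction self-contained, which is the whole point of Section~2.

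Second, your attribution of $(6)\Rightarrow(7)$ to Theorem~\ref{TeoBAsintoticidad} is inaccurate: that proof applies Theorem~\ref{TeoBOscilacion} (i.e.\ (5)) recursively, so what it establishes is $(5)\Rightarrow(7)$. Your argument is still salvageable because you separately prove $(6)\Rightarrow(5)$, whence $(6)\Rightarrow(5)\Rightarrow(7)$; but the chain $(5)\Rightarrow(6)\Rightarrow(7)$ as you wrote it is not what those references provide. The paper avoids this by routing $(5)\Rightarrow(7)\Rightarrow(6)\Rightarrow(1)$ directly.

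Third, the paper closes the loop with $(6)\Rightarrow(1)$ rather than your $(5)\Rightarrow(1)$, and it carries out the norm construction on $c_{00}$ explicitly (the argument you defer to \cite{cg}), noting that it is a slight modification of the one there. Your reliance on \cite{cg} is legitimate since the specialization to $([\N]^1,\ldots,[\N]^1)$ makes block oscillation stability coincide with ordinary $(k,\varepsilon)$-oscillation stability, but the paper prefers to exhibit the construction so the equivalence is fully proved in-house. In $(7)\Rightarrow(6)$ you should also, as the paper does, pass to $M'/m'_{k-1}$ rather than $M'$ itself, to match the shift by $k-1$ in the definition of $(\mathcal{B}_i)_{i\in\N}$-block asymptotic oscillation stability.
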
  
\begin{proof}
	$(1)\Rightarrow (2).$ We proceed by transfinite induction on the lexicographical rank of the barriers. Assume that $\mathcal{B}$ is a barrier on $N\in[\N]^{\omega}$ of lexicographical rank equal to $\omega^k$ for some $k\in\N$ (we know this fact because of Corollary \ref{Cor:rankBar}). We take an arbitrary coloring $\varphi:\mathcal{B}\longrightarrow \{1,\ldots,q\}$ where $q\in\N$. By Corollary \ref{Cor:rank=omega} and Theorem \ref{Teo:BarOmk}, there is $n\in \N\cup\{0\}$ such that $[N/n]^k\subseteq\mathcal{B}$. We consider the restriction $ \varphi\upharpoonright_{\mathcal{B}\upharpoonright_{N/n}}:\mathcal{B}\upharpoonright_{N/n}\rightarrow\{1,\ldots,q\}$, where $\mathcal{B}\upharpoonright_{N/n}=[N/n]^k$. From assumption (Th. \ref{Teo:Ramsey}) we get $i\leq q$ and $M\in[N/n]^{\omega}$ so that $[M]^k\subseteq (\varphi|_{\mathcal{B}\upharpoonright_{N/n}})^{-1}(i)$. This  implies that $\mathcal{B}\upharpoonright_{M}\subseteq\varphi^{-1}(i)$.
	
	Now, suppose that for every barrier $\mathcal{B}$ with $rank(\mathcal{B})<\alpha$ (where $\alpha\geq \omega^{\omega}$) and finite coloring $\varphi:\mathcal{B}\longrightarrow\{1,\ldots,q\}$, there are $i\leq q$ and $M\in[\N]^{\omega}$ such that $\mathcal{B}\upharpoonright_{M}\subseteq\varphi^{-1}(i)$.
	Let $\mathcal{B}$ be a barrier on $N\in[\N]^{\omega}$ of lexicographical rank equal to  $\alpha$. We take an arbitrary finite coloring $\varphi:\mathcal{B}\rightarrow\{1,\ldots,q\}$. We assume without loss of generality that $\mathcal{B}_{\{n\}}\neq\emptyset$ for all $n\in N$. 
	For each $n\in N$ we define
	\begin{equation*}
	\varphi_{n}:\mathcal{B}_{\{n\}}\rightarrow \{1,\ldots,q\}~\text{as}~\varphi_{n}(s)=\varphi(\{n\}^{\frown}s)~\text{for all}~
	s\in\mathcal{B}_{\{n\}}.
	\end{equation*}
	According Lemma \ref{Lem:rankBn}, we know  that $rank(\mathcal{B}_{\{n\}}\upharpoonright M)<\alpha$ for every $n\in N$ and $M\in[N]^{\omega}$. We set $m_1:=\min (N)$. Applying the inductive hypothesis to $\varphi_{m_1}$ we get $i_1\leq q$ and $M_1\in[N/m_1]^{\omega}$ such that  $\varphi_{m_1}(\mathcal{B}_{\{m_1\}}\upharpoonright_{M_1})=\{i_1\}$. We set $m_2:=\min (M_1)$ and consider the restriction $\varphi_{m_2}\upharpoonright_{\mathcal{B}_{\{m_2\}}\upharpoonright_{M_1}}$. Again, from the inductive hypothesis we find $i_2\leq q$ and $M_2\in[M_1/m_2]^{\omega}$ such that $\varphi_{m_2}(\mathcal{B}_{\{m_2\}}\upharpoonright_{M_2})=\{i_2\}$. Thus we recursively obtain $i_j\leq q$ and $M_j\in[M_{j-1}/m_j]^{\omega}$ so that $\varphi_{m_j}(\mathcal{B}_{\{m_j\}}\upharpoonright_{M_j})=\{i_j\}$, where $m_j:=\min (M_{j-1})$, for each $j\in\N$. Later, we define a finite coloring $\phi:[\N]^1\rightarrow \{1,\ldots,q\}$ by the rule
	\begin{equation*}
	\phi(\{j\})=i_j~\text{for each}~\{j\}\in[\N]^1.
	\end{equation*}
	By  Corollary \ref{Cor:rank=omega}, we know that $rank([\N]^1)=\omega$. Hence we get $i\leq q$ and $L\in[\N]^{\omega}$ such that $\phi([L]^1)=\{i\}$. We set $M=\{m_j:j\in L\}$. Given an arbitrary element $s\in\mathcal{B}\upharpoonright M$ we have that $s=\{m_{j_1},\ldots,m_{j_{\ell_s}}\}$ with $|s|=\ell_s$. Recall that $m_{j_2}=\min (M_{j_2-1})$ and $M_{j_2-1}\subseteq M_{j_1}$ since $j_1\leq j_2-1$. From this follows that $\{m_{j_2},\ldots,m_{j_{\ell_s}}\}$ is in $\mathcal{B}_{\{m_{j_1}\}}\upharpoonright_{M_{j_1}}$. Hence, $\varphi(s)=\varphi_{m_{j_1}}(\{m_{j_2},\ldots,m_{j_{\ell_s}}\})=i$. Since $s\in\mathcal{B}\upharpoonright M$ was taking arbitrarily, we conclude that $\mathcal{B}\upharpoonright M\subseteq\varphi^{-1}(i)$.
	%%%%%------------------------------------
	
	$(2)\Rightarrow (3).$ See the proof of Theorem \ref{Teo:RamseyBBarreras}.
	%%%%%--------------------------------------
	
	$(3)\Rightarrow(4).$ See the proof of Theorem \ref{Teo:RamseyAnaBBarreras}.
	%%%%%--------------------------------------
	
	$(4)\Rightarrow(5).$ See the proof of Theorem \ref{TeoBOscilacion}.
	%%%%%----------------------------------------
	
	$(5)\Rightarrow(7).$ See the proof of Theorem \ref{TeoBAsintoticidad}
	%%%%%--------------------------------
	
	$(7)\Rightarrow(6).$ Let $(B_i)_{i=1}^k$ be a sequence of barriers on $\N$, where $k\in\N/1$. We consider the sequence $(\mathcal{B}'_i)_{i\in\N}$ where $\mathcal{B}'_i=\mathcal{B}_i$ for each $i\leq k$ and $\mathcal{B}'_i=[\N]^1$ for all $i>k$. According to assumption, there are $\varepsilon_i\searrow0$ and $M'=\{m'_i:i\in\N\}$ so that for every $n\in\N$ and
	$S=\{s_1,\ldots,s_n\}$, $T=\{t_1,\ldots,t_n\}\in Bl(\mathcal{B}'_1\upharpoonright_{M'/m'_{n-1}},\ldots,\mathcal{B}'_n\upharpoonright_{M'/m'_{n-1}})$ and $(a_i)_{i =1}^n \in [-1,1]^n$ we get that
	\begin{equation*}
	\big|\|\sum_{i = 1}^n a_i\mathcal{X}(s_i) \| -\|\sum_{i = 1}^n a_i\mathcal{X}(t_i) \|\big| < \varepsilon_{\ell}
	\end{equation*}
	where $\min (s_1\cup t_1)=m'_{\ell}$. Hence, if we set $M:=M'/m'_{k-1}$, then $(x_i)_{i\in M}$ is $(\mathcal{B}_i\upharpoonright_{M})_{i=1}^k-$block asymptotic oscillation stable.
	%%%%%-------------------------------
	
	$(6)\Rightarrow(1).$  To establish this implication we modify  slightly the proof of the particular case that is given in \cite{cg}. It is enough to prove that each $k\in\N$, every coloring $\varphi:[\N]^k\longrightarrow\{1,2\}$ has a infinite monochromatic set. Let $k\in\N$ and let $\varphi:[\N]^k\longrightarrow\{1,2\}$ be a coloring.
	We consider the completion of the norm linear space $(c_{00},\|\cdot\|)$, whose norm is defined by
	\begin{equation*}
	\|x\|=\sup\Big(\big\{|a_i|:i\in\N\big\}\cup\big\{|\sum_{i \in s} a_i| : s \in [\N]^{k}~\text{and}~\varphi(s)=1\big \}\Big)
	\end{equation*}
	for each  $x=(a_i)_{i\in\N}=\sum_{i=1}^{\infty}a_ie_i\in c_{00}$.
	Notice $(e_i)_{i\in\N}$ is a normalized sequence under $\|\cdot\|$. Also, it is easy to see that if $s\in[\N]^k$ and $\varphi(s)=1$, then $\|\sum_{i \in s} e_i\|=k$. Moreover, we claim that $ \|\sum_{i \in t} e_i\|\leq k-1$ for all $t\in[\N]^k$ such that $\varphi(t) =2$. Indeed, suppose that $t\in[\N]^k$ and $\varphi(t) =2$. The vector $\sum_{i\in t}e_i$ may be written as  $(a_i)_{i\in\N}$ where $a_i=\chi_{t}(i)$ for each $i\in\N$. We consider $u \in [\mathbb{N}]^{k}$ such that $\varphi(u) = 1$. Since $t \not=u$ and $|t| = |u|$, there is  $i_0 \in t\backslash u$ and so $|\sum_{i\in u}a_i|\leq k-1$. Hence 
	\begin{align}\label{normat=1}\tag{$\bigstar$}
	\|\sum_{i \in t} e_i\| \leq k-1~\text{whenever}~ t\in[\N]^k~\text{and}~\varphi(t) =1.
	\end{align}
	Applying the hypothesis to $(e_i)_{i\in\N}$ and the sequence $([\N]^1,\ldots,[\N]^1)$ of length $k$, we find $\varepsilon_i\searrow0$ and $M'\in[\N]^{\omega}$ so that every 
	$S=\{s_1,\ldots,s_k\}$, $T=\{t_1,\ldots,t_k\}\in Bl^{k}([M']^1)$ and $(a_i)_{i =1}^k\in [-1,1]^k$ satisfy
	\begin{equation*}
	\big|\|\sum_{i = 1}^k a_i\mathcal{E}(s_i) \|-\|\sum_{i = 1}^ka_i\mathcal{E}(t_i) \|\big| < \varepsilon_{\ell}
	\end{equation*}
	where $\min (s_1\cup t_1)=m'_{\ell}$. We choose $\ell\in\N$ such that $\varepsilon_{\ell}\leq\frac{1}{2}$, and we set $M:=M'/{m'_{\ell-1}}$. Thus, for each $S=\{s_1,\ldots,s_k\}$, $T=\{t_1,\ldots,t_k\}\in Bl^{k}([M]^1)$ we have 
	\begin{equation*}
	\big| \|\sum_{i\in\cup_{j\leq k} t_j}e_i\|-\|\sum_{i\in\cup_{j\leq k} t_j}e_i\| \big|=\big|\|\sum_{i = 1}^k\mathcal{E}(s_i) \| -\|\sum_{i = 1}^k\mathcal{E}(t_i) \|\big| < \varepsilon_{\ell}\leq\frac{1}{2}.
	\end{equation*}
	 From the equality $\bigoplus_{i\leq k}[M]^1=[M]^k$ and the previous inequality follows that
	\begin{align*}
	\big| \|\sum_{i\in s}e_i\|-\|\sum_{i\in t}e_i\| \big| < \frac{1}{2},
	\end{align*}
	for every $s$, $t\in[M]^k$. Assume that there are $s$, $t \in [M]^k$ such that $\varphi(s)=1$ and $\varphi(t)=2$. Then
	\begin{align*}
	\big| \|\sum_{i \in s} e_i\|_{W} - \|\sum_{i \in t} e_i\|_{W}  \big| &< \frac{1}{2}\\
	k - \|\sum_{i \in t} e_i\|_{W} &<\frac{1}{2}\\
	k - \frac{1}{2} &<\|\sum_{i \in t} e_i\|_{W},
	\end{align*}
	which contradicts (\ref{normat=1}). Hence $M$ is the required set.
\end{proof}

%%%%%%%%%%%%%%%%%%%%%%%%%%%%%%%%%
%%%%%%%%%%%%%%%%%%%%%%%%%%%
\section{Block Asymptotic Models}
%%%%%%%%%%%%%%%%%%%%%%%

From what we have seen so far, the idea of generalizing spreading models naturally arises. This naturalness is observed in the use of blocks of a sequence of barriers from which the property of block oscillation stable is obtained. The main objective of this section is to generalize the notion of spreading models via blocks of sequences of barriers. In effect, we will see how these asymptotic models are obtained. To have this done, we begin by formalizing this notion.

\begin{defin}\label{BModeloAsintotico}
	Let $(x_i)_{i \in \N}$ be a normalized basic sequence in a Banach space $(X,\|\cdot\|_X)$ and let $(\mathcal{B}_i)_{i\in\N}$ be a sequence of barriers on $\N$. A Schauder basis $(y_i)_{i\in\N}$ for a Banach Space $(Y,\|\cdot\|_{Y})$ is a  {\it $(\mathcal{B}_i)_{i\in\N}-$block asymptotic model} of $(x_i)_{i \in \N}$ if there exists $\varepsilon_i\searrow 0$ such that for every $k\in\N$ and $S = \{s_1,\ldots, s_k\} \in {Bl}(\mathcal{B}_1\upharpoonright_{\N/(k-1)},\ldots,\mathcal{B}_k\upharpoonright_{\N/(k-1)})$ we have 
	\begin{equation*}
	\big|\|\sum_{j = 1}^k a_j\mathcal{X}(s_j)\|_X -\|\sum_{j = 1}^k a_jy_j \|_Y \big| < \varepsilon_{\min (s_1)},
	\end{equation*}
	for each $(a_i)_{i =1}^ k \in [-1,1]^k$. In particular, if $\mathcal{B}=\mathcal{B}_{i}$ for all $i\in\N$, then we say that $(x_i)_{i \in \mathbb{N}}$ generates $(y_i)_{i \in \mathbb{N}}$ (or $Y$) as {\it $\mathcal{B}-$block asymptotic model}.	
\end{defin}

We remark that $([\N]^1,\ldots,[\N]^1,\ldots)-$block asymptotic model coincides with the spreading model.

\medskip

Similar to spreading models, if $(y_i)_{i\in\N}$ is a $(\mathcal{B}_i)_{i\in\N}-$block asymptotic model of some basic sequence $(x_i)_{i\in\N}$, then the norm $\|\cdot\|_Y$ can be obtained as follows
\begin{align}\label{normaBklim}
\|\sum_{i=1}^ka_iy_i\|_Y=\lim\limits_{Bl(\mathcal{B}_1,\ldots,\mathcal{B}_k)\ni S\to\infty}\|\sum_{i=1}^ka_i\mathcal{X}(s_i)\|_X,
\end{align}
for all $k\in\N$ and $(a_i)_{i=1}^k\in[-1,1]^k$. 

\medskip

For every barrier $\mathcal{B}$, we will see in the next theorem that a $\mathcal{B}-$block asymptotic model is a spreading sequence. We recall that a sequence $(x_i)_{i\in\N}$ in a Banach space $(X,\|\cdot\|)$ is called {\it spreading sequence} if for each $k\in\N$ and $s=\{s(1),\ldots,s(k)\}\in[\N]^k$ we get that $$\|\sum_{i=1}^ka_ix_i\|=\|\sum_{i=1}^ka_ix_{s(i)}\|$$ for all $(a_i)_{i=1}^k\in[-1,1]^k$.

\begin{teo}\label{BModeloAsinDisperso}
Let $\mathcal{B}$ be a barrier on $\N$. If $(y_i)_{i\in\N}$ is a $\mathcal{B}-$block asymptotic model of some normalized basic sequence, then for any $k\in\N$ and $s=\{s(1),\ldots,s(k)\}\in[\N]^k$ we have that
	\begin{equation*}
	\|\sum_{i=1}^k a_iy_i\|_Y=\|\sum_{i=1}^k a_iy_{s(i)}\|_Y
	\end{equation*}
for all $(a_i)_{i=1}^k\in[-1,1]^k$.
\end{teo}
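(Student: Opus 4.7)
The plan is to exploit the limit representation of the norm of a $\mathcal{B}$-block asymptotic model recorded in equation (\ref{normaBklim}): for every $k\in\N$ and every $(a_i)_{i=1}^k\in[-1,1]^k$,
\begin{equation*}
\|\sum_{i=1}^k a_i y_i\|_Y \;=\; \lim_{Bl^k(\mathcal{B})\ni S\to\infty}\|\sum_{i=1}^k a_i\mathcal{X}(s_i)\|_X.
\end{equation*}
The idea is to compute $\|\sum_{i=1}^k a_i y_{s(i)}\|_Y$ by padding with zero coefficients up to length $k':=s(k)$, apply the above limit characterization with $k'$ in place of $k$, and then recognize the resulting expression as a limit over $k$-blocks.

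More precisely, fix $s=\{s(1),\ldots,s(k)\}\in[\N]^k$ and define $(b_j)_{j=1}^{k'}\in[-1,1]^{k'}$ by $b_{s(i)}=a_i$ for $i\leq k$ and $b_j=0$ otherwise. Then $\|\sum_{i=1}^k a_i y_{s(i)}\|_Y=\|\sum_{j=1}^{k'} b_j y_j\|_Y$. Applying the limit characterization with $k'$ terms, for every $\varepsilon>0$ there exists $n_1\in\N$ such that for every $T=\{t_1,\ldots,t_{k'}\}\in Bl^{k'}(\mathcal{B}\upharpoonright_{\N/n_1})$,
\begin{equation*}
\big|\|\sum_{j=1}^{k'} b_j \mathcal{X}(t_j)\|_X - \|\sum_{i=1}^k a_i y_{s(i)}\|_Y\big|<\varepsilon.
\end{equation*}
Since $b_j=0$ except at indices $j=s(i)$, the left-hand norm collapses to $\|\sum_{i=1}^k a_i\mathcal{X}(t_{s(i)})\|_X$.

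Now the crucial observation is that $T':=\{t_{s(1)},\ldots,t_{s(k)}\}$ is itself an element of $Bl^k(\mathcal{B})$, and $\min(t_{s(1)})\geq\min(t_1)\geq n_1$, so $T'\in Bl^k(\mathcal{B}\upharpoonright_{\N/n_1})$. Applying the limit characterization with $k$ terms, we may further choose $n\geq\max\{n_1,k-1,k'-1\}$ so that for every $k$-block in $Bl^k(\mathcal{B}\upharpoonright_{\N/n})$, and in particular for $T'$,
\begin{equation*}
\big|\|\sum_{i=1}^k a_i\mathcal{X}(t_{s(i)})\|_X - \|\sum_{i=1}^k a_i y_i\|_Y\big|<\varepsilon.
\end{equation*}
Combining the two inequalities via the triangle inequality yields
\begin{equation*}
\big|\|\sum_{i=1}^k a_i y_{s(i)}\|_Y - \|\sum_{i=1}^k a_i y_i\|_Y\big|<2\varepsilon,
\end{equation*}
and since $\varepsilon$ is arbitrary the two norms coincide.

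The only real subtlety is bookkeeping with the restrictions $\N/(k-1)$ and $\N/(k'-1)$ in Definition~\ref{BModeloAsintotico}, and verifying that as $\min t_1\to\infty$ along $k'$-blocks we automatically have $\min t_{s(1)}\to\infty$ along the associated $k$-blocks $T'$; this is immediate since $t_1<t_{s(1)}$ in $FIN^*$. No delicate uniformity argument is needed — the monotonicity $\min t_1\leq\min t_{s(1)}$ handles the matching of the two limits. Once this is set up correctly, the argument is essentially a two-step limit comparison.
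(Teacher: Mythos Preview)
Your proof is correct and follows essentially the same approach as the paper's: both pad with zeros to write $\|\sum_{i=1}^k a_i y_{s(i)}\|_Y=\|\sum_{j=1}^{s(k)} b_j y_j\|_Y$, invoke the limit representation (\ref{normaBklim}) at lengths $k$ and $s(k)$, and use the observation that $\{t_{s(1)},\ldots,t_{s(k)}\}\in Bl^k(\mathcal{B}\upharpoonright_{\N/n})$ whenever $T\in Bl^{s(k)}(\mathcal{B}\upharpoonright_{\N/n})$ to match the two limits. The only cosmetic difference is that the paper phrases the conclusion via uniqueness of the limit while you use a direct $2\varepsilon$ triangle-inequality estimate; your order of quantifiers around the choice of $n$ and $T$ could be stated a bit more cleanly (pick $n$ first, then any $T\in Bl^{s(k)}(\mathcal{B}\upharpoonright_{\N/n})$), but the argument is sound.
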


\begin{proof} 
	Suppose that $(y_i)_{i\in\N}$ is a $\mathcal{B}-$block asymptotic model of a normalized basic sequence $(x_i)_{i\in\N}$ of a Banach space $(X,\|\cdot\|)$. First, we fix $k\in\N$,  $s=\{s(1),\ldots,s(k)\}\in[\N]^k$ and $(a_i)_{i=1}^k\in[-1,1]^k$. Later, we choose  $(b_i)_{i=1}^{s(k)}\in[-1,1]^{s(k)}$ such that for each $i\leq s(k)$ 
	\begin{align*}
		b_i=\left\{
		\begin{array}{l l}
		a_j& \text{if}~i=s(j)~\text{for some}~j\leq k\\
		0& \text{otherwise}
		\end{array}
		\right..
	\end{align*}
	By the choice of $(b_i)_{i=1}^{s(k)}$, we obtain that
	\begin{align*}
		\|\sum_{i=1}^{s(k)}b_iy_i\|_Y=\|\sum_{i=1}^{k}a_iy_{s(i)}\|_Y\quad \text{and} \quad \|\sum_{i=1}^{s(k)}b_i\mathcal{X}(t_i)\|_X=\|\sum_{i=1}^{k}a_i\mathcal{X}(t_{s(i)})\|_X
	\end{align*}
	for every $T=\{t_1,\ldots,t_{s(k)}\}\in{Bl}^{s(k)}(\mathcal{B})$. From the preceding equalities and (\ref{normaBklim}), it follows that
	\begin{align*}
		\lim_{\substack{ {Bl}^k(\mathcal{B})\ni T=\{t_1,\ldots,t_k\}\rightarrow \infty}}\|\sum_{i=1}^ka_i\mathcal{X}(t_i)\|_X=&\|\sum_{i=1}^ka_iy_i\|_Y\quad\text{and}\\
		\lim_{\substack{ {Bl}^{s(k)}(\mathcal{B})\ni T=\{t_1,\ldots,t_{s(k)}\}\rightarrow \infty}}\|\sum_{i=1}^{k}a_i\mathcal{X}(t_{s(i)})\|_X=&\|\sum_{i=1}^{k}a_iy_{s(i)}\|_Y.
	\end{align*}
	Hence, given $\varepsilon>0$, there is $\ell\in\N$ such that
	\begin{equation*}
		\big| \|\sum_{i=1}^k a_i\mathcal{X}(t_i)\|_X-\big| \|\sum_{i=1}^k a_iy_i\|_Y\big|<\varepsilon.
	\end{equation*}
	for each $T=\{t_1,\ldots,t_k\}\in {Bl}^k(\mathcal{B}\upharpoonright_{\N/\ell})$. Since $\{t_{s(1)},\ldots,t_{s(k)}\}\in{Bl}^k(\mathcal{B}\upharpoonright_{\N/\ell})$ whenever $T=\{t_{1},\ldots,t_{s(k)}\}\in{Bl}^{s(k)}(\mathcal{B}\upharpoonright_{\N/\ell})$, we get 
	\begin{align*}
	\big | \|\sum_{i=1}^{k}a_i\mathcal{X}(t_{s(i)})\|_X-\|\sum_{i=1}^{k}a_iy_{i}\|_Y\big |<\varepsilon
	\end{align*}
	 for every $T\in{Bl}^{s(k)}(\mathcal{B}\upharpoonright_{\N/\ell})$. Thus
	\begin{equation*}
		\lim_{\substack{ {Bl}^{s(k)}(\mathcal{B})\ni T\rightarrow \infty}}\|\sum_{i=1}^{k}a_i\mathcal{X}(t_{s(i)})\|_X=\|\sum_{i=1}^{k}a_iy_{i}\|_Y
	\end{equation*}
	and the uniqueness of the limit guarantees that $\|\sum_{i=1}^{k}a_iy_{s(i)}\|_Y=\|\sum_{i=1}^{k}a_iy_{i}\|_Y$. Finally, since $k\in\N$, $s\in[\N]^k$ and $(a_i)_{i=1}^k\in[-1,1]^k$ were taking arbitrarily, we conclude that each $k\in\N$ and $s\in[\N]^k$ satisfy 
	\begin{align*}
	\|\sum_{i=1}^{k}a_iy_i\|_Y=\|\sum_{i=1}^{k}a_iy_{s(i)}\|_Y
	\end{align*}
	for all $(a_i)_{i=1}^k\in[-1,1]^k$.
\end{proof}

The above testifies that the concept of $\mathcal{B}-$block asymptotic model generalizes the notion of a spreading model. For this reason, it is convenient to simply name any $\mathcal{B}-$block asymptotic model as {\it $\mathcal{B}-$spreading model}. In this context, spreading model $=[\N]^1-$spreading model.

\medskip

In the following theorem, we will show how the Ramsey Theorem (via statement (7) of Theorem \ref{TeoPrincipalBarreras}) is applied in Functional Analysis to provide block asymptotic models of a normalized basic sequence. This result is a generalization of the Brunel-Sucheston Theorem (Th. \ref{Teo:BruSu}). Moreover, this theorem is equivalent to each condition of Theorem \ref{TeoPrincipalBarreras}. 

\begin{teo}\label{BrunelSuchestonBBarreras}%%Brunel-Sucheston-Barreras%%%
	For every normalized basic sequence $(x_i)_{i\in\N}$ in a Banach s pace $(X,\|\cdot\|)$ and every sequence $(\mathcal{B}_i)_{i\in\N}$ of barriers on $\N$, there exists $M\in[\N]^{\omega}$ such that the subsequence $(x_i)_{i\in M}$ induces a norm $|||\cdot|||$ on the linear space $c_{00}$ in which $(e_i)_{i\in\N}$ is a  $(\mathcal{B}_i\upharpoonright_{M})_{i\in\N}$-block asymptotic model of $(x_i)_{i\in M}$.
\end{teo}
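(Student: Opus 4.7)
The plan is to use the already established block asymptotic oscillation stability (Theorem \ref{TeoBAsintoticidad}) together with the compactness of the ambient seminorm space $\mathcal{N}_k$ (Lemma \ref{CompactoM}) to extract limiting norms $\rho_k$ on each $\R^k$, and then to stitch these into a single norm on $c_{00}$.

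Concretely, fix a sequence $\varepsilon_i \searrow 0$ decreasing fast enough for the bookkeeping at the end (for instance $\varepsilon_i = 2^{-i}$). First apply Theorem \ref{TeoBAsintoticidad} to $(x_i)_{i\in\N}$, $(\mathcal{B}_i)_{i\in\N}$ and $(\varepsilon_i)$ to produce $M=\{m_i:i\in\N\}\in[\N]^\omega$ such that $(x_i)_{i\in M}$ is $(\mathcal{B}_i\upharpoonright_M)_{i\in\N}$-block asymptotic oscillation stable. For each fixed $k\in\N$, consider the coloring $\Psi_k:Bl(\mathcal{B}_1\upharpoonright_M,\ldots,\mathcal{B}_k\upharpoonright_M)\to\mathcal{N}_k$ associated with $(x_i)_{i\in M}$. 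The block oscillation stability gives a Cauchy-type condition at infinity for $\Psi_k$ in the metric $d_k$, and compactness of $\mathcal{N}_k$ then yields, in the sense of Definition \ref{Def:Convergencia}, a limit $\rho_k\in\mathcal{N}_k$ with $\Psi_k(S)\to\rho_k$ as $S\to\infty$ in $Bl(\mathcal{B}_1\upharpoonright_M,\ldots,\mathcal{B}_k\upharpoonright_M)$. Exactly as in the proof of Lemma \ref{lem:normaBk}, the basicness of $(x_i)_{i\in\N}$ with constant $C$ promotes $\rho_k$ from a seminorm to a genuine norm: the inequality $|a_{i_0}|\le C\|\sum_{i=1}^k a_i\mathcal{X}(s_i)\|$ forces $\rho_k$ to separate points.

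Next, check the compatibility $\rho_{k+1}(a_1,\ldots,a_k,0)=\rho_k(a_1,\ldots,a_k)$. For any $S=\{s_1,\ldots,s_{k+1}\}\in Bl(\mathcal{B}_1\upharpoonright_M,\ldots,\mathcal{B}_{k+1}\upharpoonright_M)$ we have the identity $\Psi_{k+1}(S)(\sum_{i=1}^k a_ie_i)=\|\sum_{i=1}^k a_i\mathcal{X}(s_i)\|=\Psi_k(\{s_1,\ldots,s_k\})(\sum_{i=1}^k a_ie_i)$, so letting $S\to\infty$ on both sides gives the desired equality. Consequently the rule $|||\sum_{i=1}^k a_ie_i|||:=\rho_k(a_1,\ldots,a_k)$ is a well-defined norm on $c_{00}$. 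The basis-constant inequality $\|\sum_{i=1}^m a_ix_i\|\le C\|\sum_{i=1}^n a_ix_i\|$ for $m<n$ passes to the limit under $\Psi_n\to\rho_n$, giving the analogous inequality for $|||\cdot|||$, so $(e_i)_{i\in\N}$ is a Schauder basis of the completion $(Y,|||\cdot|||)$.

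Finally, it remains to repackage the $d_k$-convergence $\Psi_k\to\rho_k$ as a single block-asymptotic-model estimate. For each $k$, convergence provides $n_k\in\N$ with $d_k(\Psi_k(S),\rho_k)<\varepsilon_k$ for every $S\in Bl(\mathcal{B}_1\upharpoonright_{M/n_k},\ldots,\mathcal{B}_k\upharpoonright_{M/n_k})$; by refining $M$ once more through a diagonal argument (replacing $m_i$ with $m_{i'}$ for suitable $i'$) we may assume $n_k=k-1$ in the re-indexed barriers $\mathcal{B}_i\upharpoonright_M$. Then for every $k$ and every $S=\{s_1,\ldots,s_k\}\in Bl(\mathcal{B}_1\upharpoonright_{\N/(k-1)},\ldots,\mathcal{B}_k\upharpoonright_{\N/(k-1)})$ (read through the enumeration of $M$),
\begin{equation*}
\bigl|\|\textstyle\sum_{j=1}^k a_j\mathcal{X}(s_j)\|-|||\textstyle\sum_{j=1}^k a_je_j|||\bigr|=\bigl|\Psi_k(S)(\textstyle\sum a_je_j)-\rho_k(\textstyle\sum a_je_j)\bigr|\le d_k(\Psi_k(S),\rho_k)<\varepsilon'_{\min(s_1)},
\end{equation*}
for an appropriate $\varepsilon'_i\searrow 0$ produced by diagonalizing over $k$.

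The main obstacle is the last step: producing a \emph{single} null sequence $\varepsilon'_i$ that simultaneously controls the approximation error of every order $k$. This is handled by the standard diagonal refinement used in the proof of Theorem \ref{TeoBAsintoticidad}, but it must be set up at the outset by choosing the initial $\varepsilon_i$ small enough that after nesting the refinements indexed by $k$ the overall bound still decays to zero in $\min(s_1)$. Once this bookkeeping is in place, verification of Definition \ref{BModeloAsintotico} is immediate and the theorem follows.
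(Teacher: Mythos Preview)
Your proof follows the paper's argument almost exactly through the construction of $M$, the limits $\rho_k$, the compatibility check, the definition of $|||\cdot|||$, and the verification that $(e_i)$ is basic. The divergence is only in the last step, and there your proposed route is both more complicated than necessary and, as written, not quite complete.

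You obtain for each $k$ a threshold $n_k$ with $d_k(\Psi_k(S),\rho_k)<\varepsilon_k$ for $S$ beyond $n_k$, and then propose a further diagonal refinement of $M$ so that ``$n_k=k-1$''. But even after that refinement the bound you get is $\varepsilon_k$, which depends on the \emph{length} $k$ of the block, not on $\min(s_1)$. For a fixed $k$ the index $\ell$ of $\min(s_1)$ can be arbitrarily large, yet your error stays at $\varepsilon_k$; this does not yield a single null sequence $\varepsilon'_\ell$ as required by Definition~\ref{BModeloAsintotico}. A more elaborate double diagonalization (for each $\ell$, control all $k\le\ell$ simultaneously to level $1/\ell$) would repair this, but you have not written it, and in fact it is not needed.

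The paper avoids the extra refinement entirely by exploiting the oscillation stability you already have, via a triangle inequality. Given $k$ and $S$ with $\min(s_1)=m_\ell$, choose any $T\in Bl(\mathcal{B}_1\upharpoonright_M,\ldots,\mathcal{B}_k\upharpoonright_M)$ with $\min(t_1)$ large enough that both $\min(t_1)>m_\ell$ and $d_k(\Psi_k(T),\rho_k)<\varepsilon_\ell/2$ (the latter by mere convergence of $\Psi_k$). Then Theorem~\ref{TeoBAsintoticidad} gives $d_k(\Psi_k(S),\Psi_k(T))<\varepsilon_\ell/2$ since $\min(s_1\cup t_1)=m_\ell$, and the triangle inequality yields $d_k(\Psi_k(S),\rho_k)<\varepsilon_\ell$ with the \emph{same} sequence $(\varepsilon_\ell)$ and the \emph{same} set $M$ produced at the outset. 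The key point you missed is that the oscillation bound from Theorem~\ref{TeoBAsintoticidad} already carries the correct dependence on $\min(s_1)$ uniformly in $k$; you only need convergence to position a single auxiliary comparison block $T$, not to control all $S$ directly.
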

\begin{proof}
	Let $(x_i)_{i\in\N}$ be a normalized basic sequence in a Banach space $(X,\|\cdot\|)$ and let $(\mathcal{B}_i)_{i\in\N}$ be a sequence of barriers on $\N$. Clause (7) of Theorem \ref{TeoPrincipalBarreras} yields $\varepsilon_i\searrow0$ and $M=\{m_i:i\in\N\}\in[\N]^{\omega}$ so that
	\begin{align}\label{BaBSdes1}
	\big|\|\sum_{j = 1}^k a_j\mathcal{X}(s_j) \| -\|\sum_{j = 1}^k a_j\mathcal{X}(t_j) \|\big| < \frac{\varepsilon_{\ell}}{2},
	\end{align}
	whenever $\min(s_1\cup t_1)=m_{\ell}$, holds for each $k\in\N$, $S=\{s_1\ldots,s_k\},~T=\{t_1\ldots,t_k\}\in {Bl}(\mathcal{B}_1\upharpoonright_{M/m_{k-1}},\ldots,\mathcal{B}_k\upharpoonright_{M/m_{k-1}})$ and $(a_i)_{i =1}^ k \in [-1,1]^k$. We fix $k\in \N$. It is not difficult to show that ${Bl}(\mathcal{B}_1\upharpoonright_{M},\ldots,\mathcal{B}_k\upharpoonright_{M})$ with the relation $\leq$, which is defined by 
	\begin{align*}
	&S<T~\text{if only if}~\max (s_1)<\min (t_1)\quad\text{and}\\
	&S=T~\text{if $\cup_{i\leq k}s_i=\cup_{i\leq k}t_i$}
	\end{align*}
	for any $S=\{s_1\ldots,s_k\},~T=\{t_1\ldots,t_k\}\in {Bl}(\mathcal{B}_1\upharpoonright_{M},\ldots,\mathcal{B}_k\upharpoonright_{M})$, is a directed set. So we consider the net $\big(\Psi_k(S)\big)_{S\in{Bl}(\mathcal{B}_1\upharpoonright_{M},\ldots,\mathcal{B}_k\upharpoonright_{M})}$ in the compact metric space $(\mathcal{N}_k,d_k)$. Recall that for each $S\in {Bl}(\mathcal{B}_1,\ldots,\mathcal{B}_k)$, we have
	\begin{equation*}
	\Psi_k(S)(\sum_{i=1}^ka_ie_i)=\|\sum_{i=1}^ka_i\mathcal{X}(s_i)\|~\text{for all}~(a_i)_{i=1}^k\in[-1,1]^k.
	\end{equation*}
	Given $\varepsilon>0$, we choose $\ell\in\N$ such that $\varepsilon_{\ell}\leq 2\varepsilon$ and $\ell\geq k$. Since property \ref{Bsegmento} of barriers allows us to find $R=\{r_1,\ldots,r_k\}\in{Bl}(\mathcal{B}_1\upharpoonright_{M},\ldots,\mathcal{B}_k\upharpoonright_{M})$ such that $\min (r_1)=m_{\ell}$, and every pair $S,~T\in{Bl}(\mathcal{B}_1\upharpoonright_{M},\ldots,\mathcal{B}_k\upharpoonright_{M})$ satisfies
	\begin{align*}
	S\geq T&\Leftrightarrow \min (s_1)> \max (t_1)~\text{or}~\cup_{i\leq k}s_i=\cup_{i\leq k}t_i\notag \\
	&\Leftrightarrow S\in {Bl}(\mathcal{B}_1\upharpoonright_{M/\max (t_1)},\ldots,\mathcal{B}_k\upharpoonright_{M/\max (t_1)})\cup\{T\},
	\end{align*}
	it follows that
	\begin{align*}
	d_k\big(\Psi_k(S),\Psi_k(T)\big)<\frac{\varepsilon_{\ell'}}{2}
	\leq\frac{\varepsilon_{\ell}}{2}
	\leq  \varepsilon,
	\end{align*}
	where $\min(s_1\cup t_1)=m_{\ell'} $, for all $S,T\in{Bl}(\mathcal{B}_1\upharpoonright_{M},\ldots,\mathcal{B}_k\upharpoonright_{M})$ with $S,T\geq R$. This means that $\left(\Psi_k(S)\right)_{S\in{Bl}(\mathcal{B}_1\upharpoonright_{M},\ldots,\mathcal{B}_k\upharpoonright_{M})}$  is a Cauchy-net in the compact metric space $(\mathbb{N}_k,d_k)$. Hence, there is $\rho_k\in\mathcal{N}_k$ so that $\big(\Psi_k(S)\big)_{S\in{Bl}(\mathcal{B}_1\upharpoonright_{M},\ldots,\mathcal{B}_k\upharpoonright_{M})}$ converges to $\rho_k$. We claim that
	\begin{equation}\label{BaBkconvergencia}
	\lim\limits_{{Bl}(\mathcal{B}_1\upharpoonright_{M},\ldots,\mathcal{B}_k\upharpoonright_{M})\ni S\rightarrow\infty}\Psi_k(S)=\rho_k.
	\end{equation}
	Indeed, by convergence of the net, given $\varepsilon>0$ there exists $T\in{Bl}(\mathcal{B}_1\upharpoonright_{M},\ldots,\mathcal{B}_k\upharpoonright_{M})$ such that $d_k(\Psi_k(S),\rho_k)<\varepsilon$ whenever $S\in{Bl}(\mathcal{B}_1\upharpoonright_{M},\ldots,\mathcal{B}_k\upharpoonright_{M})$ and $S\geq T$, which implies that
	\begin{equation*}
	d_k(\Psi_k(S),\rho_k)<\varepsilon
	\end{equation*}
	for every $S\in{Bl}(\mathcal{B}_1\upharpoonright_{M/\max (t_{1})},\ldots,\mathcal{B}_k\upharpoonright_{M/\max (t_{1})})$. Besides Lemma \ref{lem:normaBk} assures that $\rho_k$ is a norm on $\R^k$.
	
	\medskip
	
	Now, we are going to build a norm from the norms $\rho_k$'s that we obtained earlier. To do this, we consider the linear space $c_{00}$ and its canonical basis $(e_i)_{i\in\N}$. We define the function $|||\cdot|||:c_{00}\rightarrow [0,\infty)$ as
	\begin{equation*}
	|||\sum_{i=1}^k a_i e_i|||=\rho_k\big(\sum_{i=1}^k a_i e_{i}\big),
	\end{equation*}
	for each $k\in\N$ and $\sum_{i=1}^k a_i e_{i}\in c_{00}$. Notice that $|||\cdot|||$ is well-defined since for every $k\in\N$ and $S=\{s_1,\ldots,s_{k+1}\}\in{Bl}(\mathcal{B}_1\upharpoonright_{M},\ldots,\mathcal{B}_{k+1}\upharpoonright_{M})$ one has that
	\begin{align*}
	\Psi_{k+1}(S)\left(\sum_{i=1}^k a_i e_i+0e_{k+1}\right)=\Psi_k(S\setminus\{s_{k+1}\})\left(\sum_{i=1}^k a_i e_i\right),
	\end{align*}
	for all $(a_i)_{i=1}^k\in[-1,1]^k$. Hence, for each $k\in\N$ we get that
	\begin{align*}
	\rho_{k+1}\left(\sum_{i=1}^k a_i e_i+0e_{k+1}\right)&=\lim\limits_{{Bl}(\mathcal{B}_1\upharpoonright_{M},\ldots,\mathcal{B}_{k+1}\upharpoonright_{M})\ni S\rightarrow\infty}\Psi_{k+1}(S)\left(\sum_{i=1}^k a_i e_i+0e_{k+1}\right)\\
	&=\lim\limits_{{Bl}(\mathcal{B}_1\upharpoonright_{M},\ldots,\mathcal{B}_{k}\upharpoonright_{M})\ni T\rightarrow\infty}\Psi_{k}(T)\left(\sum_{i=1}^k a_i e_i\right)\\
	&=\rho_k\left(\sum_{i=1}^k a_i e_i\right)
	\end{align*}
	for all $(a_i)_{i=1}^k\in[-1,1]^k $. Furthermore, $|||\cdot|||$ is a norm on $c_{00}$ because, for every $k\in\N$, $\rho_k$ is a norm on $\R^k$. We denote the completion of the normed space $(c_{00},|||\cdot|||)$ by $(E,|||\cdot|||)$. From this, $E$ is the completion of $\langle\{e_i:i\in\N\}\rangle$. We propose $(e_i)_{i\in\N}$ as the required $(\mathcal{B}_i\upharpoonright_{M})_{i\in\N}-$block asymptotic model of $(x_i)_{i\in M}$. First of all, let us show that $(e_i)_{i\in\N}$ is a Schauder basis for $(E,|||\cdot|||)$. Certainly, since $(x_i)_{i\in M}$ is a basic sequence, there is $C\geq 1$ such that for any $m,n\in\N$ with $m\leq n$ and for every $S\in{Bl}(\mathcal{B}_1\upharpoonright_{M},\ldots,\mathcal{B}_{n}\upharpoonright_{M})$ one obtains that
	\begin{align*}
	\Psi_m(\{s_1,\ldots,s_m\})(\sum_{i=1}^m a_ie_i)=\|\sum_{i=1}^m a_i\mathcal{X}(s_i)\|\leq C\|\sum_{i=1}^n a_i\mathcal{X}(s_i)\|=C\Psi_n(S)(\sum_{i=1}^n a_ie_i)
	\end{align*}
	for all $(a_i)_{i=1}^n\in[-1,1]^n$. From this it follows that 
	\begin{align*}
	|||\sum_{i=1}^m a_ie_i|||=\rho_m(\sum_{i=1}^m a_ie_i)\leq C\rho_n(\sum_{i=1}^n a_ie_i)=C |||\sum_{i=1}^n a_ie_i|||
	\end{align*}
	holds for each $m,n\in\N$ with $m\leq n$ and each $(a_i)_{i=1}^n\in[-1,1]^n$. Hence, $(e_i)_{i\in\N}$ is a Schauder basis for $(E,|||\cdot|||)$. Finally, we prove that $(e_i)_{i\in\N}$ satisfies the remained condition to be a $(\mathcal{B}_i\upharpoonright_{M})_{i\in\N}-$block asymptotic model of $(x_i)_{i\in M}$. To have this done, we fix $k\in\N$ and $S\in{Bl}(\mathcal{B}_1\upharpoonright_{M/m_{k-1}},$ $\ldots,\mathcal{B}_k\upharpoonright_{M/m_{k-1}})$, where $\min (s_1)=m_{\ell}$ for some $\ell\in\N$. According to the convergence (\ref{BaBkconvergencia}), there is $n\in \N$ such that for every $T=\{t_1,\ldots,t_k\}\in{Bl}(\mathcal{B}_1\upharpoonright_{M/n},$ $\ldots,\mathcal{B}_k\upharpoonright_{M/n})$ we get that
	\begin{equation}\label{BaBSdes2}
	\big|\|\sum_{i=1}^ka_i\mathcal{X}(t_i)\| -|||\sum_{i = 1}^k a_i e_i ||| \big|<\frac{\varepsilon_{\ell}}{2}
	\end{equation}
	for each $(a_i)_{i=1}^k\in[-1,1]^k$. If $T\in{Bl}(\mathcal{B}_1\upharpoonright_{M/\max\{n,m_{\ell}\}},\ldots,\mathcal{B}_k\upharpoonright_{M/\max\{n,m_{\ell}\}})$, then (\ref{BaBSdes1}) and (\ref{BaBSdes2}) imply that
	\begin{align*}
	\big| \| \sum_{i=1}^{k}a_i \mathcal{X}(s_i) \| - ||| \sum_{i = 1}^k a_i e_{i} |||\big|
	&\leq   \big| \| \sum_{i=1}^{k}a_i \mathcal{X}(s_i) \| - \| \sum_{i=1}^{k} a_i\mathcal{X}(t_i) \|\big| \\ 
	&\quad\quad +
	\big| \| \sum_{i=1}^{k} a_i \mathcal{X}(t_i) \| - ||| \sum_{i = 1}^k a_i e_i ||| \big|<\varepsilon_{\ell}  
	\end{align*}
	for each $(a_i)_{i=1}^k\in[-1,1]^k$, as required.
\end{proof}

The next corollary generalizes Corollary 3.4 of the article \cite{cg}.

\begin{cor}
	Ramsey Theorem is equivalent to Theorem \ref{BrunelSuchestonBBarreras}.
\end{cor}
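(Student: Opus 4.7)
The plan is to prove both directions of the equivalence. The forward direction, that the Ramsey Theorem implies Theorem \ref{BrunelSuchestonBBarreras}, is essentially already done: the proof of Theorem \ref{BrunelSuchestonBBarreras} just presented invokes clause $(7)$ of Theorem \ref{TeoPrincipalBarreras}, and by that equivalence theorem $(7)$ holds under the Ramsey Theorem.

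For the reverse direction I would imitate the argument used to prove the implication $(6)\Rightarrow(1)$ of Theorem \ref{TeoPrincipalBarreras}. Fix $k\in\N$ and a two-coloring $\varphi:[\N]^k\to\{1,2\}$; it suffices to find an infinite monochromatic set. On $c_{00}$ I define the norm
\begin{equation*}
\|x\|=\sup\Big(\{|a_i|:i\in\N\}\cup\big\{\big|\sum_{i\in s}a_i\big|:s\in[\N]^k,\ \varphi(s)=1\big\}\Big),
\end{equation*}
and complete it to obtain a Banach space $X$. The sequence $(e_i)_{i\in\N}$ is normalized, and a direct estimate—any sum $\big|\sum_{i\in s\cap[1,n]}a_i\big|$ with $s\in[\N]^k$ is at most $(k-1)\max_i|a_i|\leq(k-1)\|x\|$—yields the partial-sum bound $\|P_nx\|\leq(k-1)\|x\|$, so $(e_i)_{i\in\N}$ is a normalized basic sequence and Theorem \ref{BrunelSuchestonBBarreras} applies.

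Applying the theorem to $(e_i)_{i\in\N}$ with the constant sequence of barriers $\mathcal{B}_i:=[\N]^1$ ($i\in\N$) I obtain $M=\{m_i:i\in\N\}\in[\N]^{\omega}$, a null sequence $\varepsilon_i\searrow 0$ and a block asymptotic model $(y_i)_{i\in\N}$ of $(e_i)_{i\in M}$. Using the defining inequality of a block asymptotic model twice and the triangle inequality, for any $S=\{s(1),\ldots,s(k)\}$ and $T=\{t(1),\ldots,t(k)\}$ in $[M/m_{k-1}]^k$ I get
\begin{equation*}
\big|\|\sum_{i=1}^k a_i e_{s(i)}\|-\|\sum_{i=1}^k a_i e_{t(i)}\|\big|<2\varepsilon_{\ell}
\end{equation*}
with $m_{\ell}=\min\{s(1),t(1)\}$. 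Choosing $\ell$ so large that $2\varepsilon_{\ell}\leq 1/2$ and putting $M':=M/m_{\ell-1}$, I claim $[M']^k$ is monochromatic: if instead there were $s,t\in[M']^k$ with $\varphi(s)=1$ and $\varphi(t)=2$, then taking $a_i=1$ in the previous display would give $\big|k-\|\sum_{i\in t}e_i\|\big|<1/2$, contradicting the bound $\|\sum_{i\in t}e_i\|\leq k-1$ already established in the proof of $(6)\Rightarrow(1)$.

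The step I expect to be the main obstacle is verifying that $(e_i)_{i\in\N}$ truly is a basic sequence under the constructed norm, because Theorem \ref{BrunelSuchestonBBarreras} applies only to normalized \emph{basic} sequences, whereas clauses $(5)$--$(7)$ of Theorem \ref{TeoPrincipalBarreras} accept any normalized sequence. The remainder—passing from a block asymptotic model to block asymptotic oscillation stability via the triangle inequality, and extracting a monochromatic restriction—follows the same pattern as the proof of $(6)\Rightarrow(1)$.
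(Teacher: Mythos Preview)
Your proposal is correct, but it takes a different route from the paper's own proof. The paper does not re-run the $(6)\Rightarrow(1)$ argument; instead it shows directly that Theorem~\ref{BrunelSuchestonBBarreras} implies clause~$(7)$ of Theorem~\ref{TeoPrincipalBarreras}, and then appeals to that theorem for the equivalence with Ramsey. Concretely, given an arbitrary normalized basic sequence $(x_i)_{i\in\N}$ and an arbitrary sequence of barriers $(\mathcal{B}_i)_{i\in\N}$, the paper applies Theorem~\ref{BrunelSuchestonBBarreras} to obtain $M$ and the model $(e_i,|||\cdot|||)$, and then the triangle inequality through $|||\sum a_ie_i|||$ yields the $2\varepsilon_\ell$ oscillation bound---exactly the same triangle-inequality step you carry out, but in full generality rather than for the one sequence built from $\varphi$.

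Your approach is essentially the composition of the paper's argument with the already-proved chain $(7)\Rightarrow(6)\Rightarrow(1)$, specialized to the coloring at hand. What your approach buys is self-containment: you explicitly verify that $(e_i)_{i\in\N}$ is basic in the $\varphi$-norm (your partial-sum estimate is fine for $k\geq 2$; the case $k=1$ is pigeonhole), which is a point the paper leaves implicit. What the paper's approach buys is economy---it adds Theorem~\ref{BrunelSuchestonBBarreras} as a new node in the equivalence graph of Theorem~\ref{TeoPrincipalBarreras} via the single short edge to~$(7)$, rather than building a separate path back to~$(1)$.
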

\begin{proof}
	Because of the Theorem \ref{TeoPrincipalBarreras}, it is enough to prove that clause (7) of Theorem \ref{TeoPrincipalBarreras} and Theorem \ref{BrunelSuchestonBBarreras} are equivalent.
	The first implication follows from the proof of Theorem \ref{BrunelSuchestonBBarreras}. For the converse, let $(x_i)_{i\in\N}$ be a normalized basic sequence in a Banach space $(X,\|\cdot\|)$ and let $(\mathcal{B}_i)_{i\in\N}$ be a sequence of barriers on $\N$. According to the assumption, there are $ \varepsilon_i\searrow0$ and $M\in[\N]^{\omega}$ such that $(x_i)_{i\in M}$ induces a norm $|||\cdot|||$ on the linear space $c_{00}$ and for every $k\in\N$ and $S=\{s_1,\ldots,s_k\}\in Bl(\mathcal{B}_1\upharpoonleft_{M/m_{k-1}},\ldots,(\mathcal{B}_k\upharpoonleft_{M/m_{k-1}})$ we get that
	\begin{align*}
	\big| \|\sum_{i=1}^ka_i\mathcal{E}(s_i)\|-|||\sum_{i=1}^ka_ie_i|||\big|<\varepsilon_{\ell},
	\end{align*}
	where $\min(s_1)=m_{\ell}$, for all $(a_i)_{i=1}^k\in[-1,1]^k$. By the triangle inequality, we obtain that
	\begin{align*}
		\big| \|\sum_{i=1}^ka_i\mathcal{E}(s_i)\|-\|\sum_{i=1}^ka_i\mathcal{E}(t_i)\|\big|&\leq \big|\|\sum_{i=1}^ka_i\mathcal{E}(s_i)\|-|||\sum_{i=1}^ka_ie_i|||\big|\\
		& \quad\quad +\big| |||\sum_{i=1}^ka_ie_i|||-\|\sum_{i=1}^ka_i\mathcal{E}(t_i)\|\big|\leq2\varepsilon_{\ell}
	\end{align*}
	whenever $\min(s_1\cup t_1)=m_{\ell}$, for each $k\in\N$, $S,$ $T\in Bl(\mathcal{B}_1\upharpoonleft_{M/m_{k-1}},\ldots,(\mathcal{B}_k\upharpoonleft_{M/m_{k-1}})$ and $(a_i)_{i=1}^k\in[-1,1]^k$. Hence, the sequence $(x_i)_{i\in M}$ is $(\mathcal{B}_i\upharpoonleft_{M})_{i\in\N}-$block asymptotic oscillation stable.
\end{proof}

A direct consequence of the previous theorem is the fact that a normalized basic sequence is able to generate several block asymptotic models, each one related to a different sequence of barriers on $\N$. This lead us to pose the next natural question
that arose in the process of this work.

\begin{question}
	What kinds of properties can distinguish one from the other the block asymptotic models of the same normalized basic sequence?
\end{question} 

This question motivates a new research line that consists of the study of the various asymptotic block models of the same normalized basic sequence. An example of this is presented in the next section where two asymptotic models will be described, one of which is spreading and the other is not.

\medskip

For the next question we consider the Tsirelson space \cite{Tsirelson} denoted by $T$ and the fact that every spreading model of $T$ is isomorphic to $\ell_1$ (for a proof we refer a reader to \cite[Th. 3.5.(3)]{Od2}).

\begin{question}
	Is every $\mathcal{B}-$spreading model of $T$ isomorphic to $\ell_1$? where $\mathcal{B}$ is an arbitrary barrier.
\end{question}

The iteration applied to spreading models has been studied in the article \cite{Ar}. This idea can be applied to the block asymptotic models. In particular, for a barrier $\mathcal{B}$ on $\N$ one could define the $(2,\mathcal{B})-${\it iterated spreading model} of a normalized basic sequence $(x_i)_{i\in\N}$ as the $\mathcal{B}$-spreading model of a subsequence of the $\mathcal{B}$-spreading model of a subsequence of $(x_i)_{i\in\N}$. For every $k>2$, the $(k,\mathcal{B})-$iterated spreading model of a normalized basic sequence is defined analogously. It is shown in \cite[Cor. 69]{Ar} that there exists a Banach space $X$ with a normalized basic sequence $(x_i)_{i\in\N}$ such that the $k-$iterated spreading model of $(x_i)_{i\in\N}$ is not isomorphic to neither $\ell_p$ ($1\leq p <\infty$) nor $c_0$, for every $k\in\N$. In this direction, we formulate the following questions.

\begin{question}
	Given a barrier $\mathcal{B}$ on $\N$, is there a normalized basic sequence so that, for every $k\in\N$, its $(k,\mathcal{B})-$iterated spreading model is not isomorphic to $\ell_p$ $(1\leq p<\in\infty)$ or $c_0$?
\end{question}

\begin{question}
	Given $k\geq 2$ and a barrier $\mathcal{B}$ on $\N$, does there exist a normalized basic sequence such that its $(k,\mathcal{B})-$iterated spreading model is not isomorphic to $\ell_p$ $(1\leq p<\infty)$ or $c_0$?
\end{question} 

The previous questions could be extended by the notion of iteration that alternates the elements of a sequence of barriers $(\mathcal{B}_i)_{i\in\N}$, i.e., in the $i-$th case one gets the $\mathcal {B}_i$-spreading model of the sequence obtained from the preceding step.

%%%%%%%%%%%%%%%%%%%%%%%%%%%%%%%
%%%%%%%%%%%%%%%%%%%%%%%%%%%%%%%%%%%%%%%
\section{Example}
%%%%%%%%%%%%%%%%%%

The aim of this section is to obtain two block asymptotic models of the same normalized basic sequence that are different but equivalent and only one of them has the spreading property.

\medskip 

Our example is based on the normed space $(c_{00},\|\cdot\|)$, whose norm is defined by
\begin{align*}
\|x\|=\sup&\Big(\big\{|a_i|:i\in\N\big\}\cup\big\{\frac{3}{4}\sum_{i\in s}|a_i|:s\in[\N]^2\big\}\cup\big\{\frac{9}{16}\sum_{i\in s}|a_i|: s\in[\N]^8\big\}\Big)
\end{align*}
for all $x=\sum_{i=1}^{\infty}a_ie_i\in c_{00}$. The completion of this normed space will be denoted by $(X,\|\cdot\|)$.

\begin{obs}\label{ObservacionEjemploModeloDisperso}
	For $k\in\N$ and $(a_i)_{i=1}^k\in[-1,1]^k$, we have the following relationships which can be checked easily:
	\begin{enumerate}[label=\textnormal{(\roman*)}]
		\item $\|e_i\|=1$ for all $i\in\N$,\label{normalized}
		\item (Unconditional) 
		$\|\sum_{i=1}^k a_ie_{i}\|=\|\sum_{i=1}^k\varepsilon_ia_{i}e_{i}\|$
		for all $(\varepsilon_i)_{i=1}^k\in\{-1,1\}^k$,\label{positive}
		\item $\|\sum_{i=1}^k a_ie_{i}\|=\|\sum_{i=1}^ka_{\sigma(i)}e_{i}\|$ for each permutation $\sigma:\{1,\ldots,k\}\to\{1,\ldots,k\}$,\label{ordered} 
		\item $\|\sum_{i=1}^k a_ie_{i}\|=\|\sum_{i=1}^k a_{i}e_{t(i)}\|$ for every $t\in[\N]^k$, and\label{spreading}
		\item $\|\sum_{i=1}^{\ell} a_ie_{i}\|\leq\|\sum_{i=1}^k a_{i}e_{i}\|$ for all $\ell\leq k$.\label{Schauder}
	\end{enumerate}
\end{obs}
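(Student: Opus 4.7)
The plan is to verify each clause by a direct inspection of the three families of functionals that define $\|\cdot\|$, exploiting the fact that every one of them is symmetric (sign-invariant, permutation-invariant, and depends only on which indices lie in the support) and is taken over a supremum that is preserved by the relevant transformation.

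For \ref{normalized}, I would substitute $a_j=\delta_{i,j}$: the first family contributes $1$, the second $3/4$, the third $9/16$, so the supremum is $1$. For \ref{positive}, note that every term inside the supremum defining $\|\cdot\|$ depends on the coefficients only through their absolute values $|a_i|$, so replacing $a_i$ by $\varepsilon_i a_i$ with $\varepsilon_i\in\{-1,1\}$ produces exactly the same family of real numbers.

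For \ref{ordered}, given a permutation $\sigma$ of $\{1,\ldots,k\}$, each sum $\frac{3}{4}\sum_{i\in s}|a_i|$ with $s\subseteq\{1,\ldots,k\}$ and $|s|=2$ equals $\frac{3}{4}\sum_{i\in\sigma^{-1}(s)}|a_{\sigma(i)}|$, and similarly for $|s|=8$ and for the singleton family. Thus the set of values taken by the functionals on $\sum a_i e_i$ coincides with the set of values taken on $\sum a_{\sigma(i)} e_i$, so the suprema agree. For \ref{spreading}, observe that for any $t\in[\N]^k$, the map $s\mapsto t[s]:=\{t(i):i\in s\}$ is a bijection between subsets $s\subseteq\{1,\ldots,k\}$ of given cardinality and subsets of $t$ of the same cardinality; since the sums appearing in the definition of $\|\cdot\|$ range over \emph{all} $s\in[\N]^j$ for $j\in\{1,2,8\}$, restricting attention to $s$ contained in the support contributes the same values to the supremum on both sides.

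Finally, for \ref{Schauder}, the value $\|\sum_{i=1}^\ell a_i e_i\|$ is a supremum over the subfamily of functionals whose indexing sets $s$ intersect only $\{1,\ldots,\ell\}\subseteq\{1,\ldots,k\}$, hence is bounded above by $\|\sum_{i=1}^k a_i e_i\|$, which is the supremum over the full collection of sums (equivalently, one recovers the truncated vector from the larger one by setting coefficients $a_{\ell+1},\ldots,a_k$ to zero, whence the inequality follows from the fact that adding zero coefficients can only add non-negative terms to each sum). None of these steps presents a genuine obstacle; the verification is purely computational and relies only on the symmetric shape of the defining suprema.
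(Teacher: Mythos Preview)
Your verification is correct and proceeds exactly along the lines the paper intends: the observation is stated without proof in the paper (it is declared to be something that ``can be checked easily''), and your argument simply supplies the routine check that each of the three families of functionals in the definition of $\|\cdot\|$ depends only on the multiset $\{|a_i|\}$ of absolute values of the nonzero coefficients. There is nothing to add.
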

From \ref{Schauder} it follows that $(e_i)_{i\in\N}$ is a Schauder basis with constant $1$ for $(X,\|\cdot\|)$. On the other hand, \ref{spreading} says that $(e_i)_{i\in\N}$ is a spreading sequence in $(X,\|\cdot\|)$. Henceforth, because of \ref{positive}, we may assume without loss of generality that $(a_i)_{i=1}^k\in[ 0,1]^k$ for all $k\in\N$.

\medskip 

Now, for all $k\in\N$, $t\in[\N]^k$ and $(a_i)_{i=1}^k\in[0,1]^k$ with $a_1\geq a_2\geq\cdots\geq a_k$ we can calculate the norm of  $\sum_{i=1}^k a_ie_{t(i)}$ as follows:
\begin{align}\label{Norma28}
\|\sum_{i=1}^k a_ie_{t(i)}\|&=
\left\{
\begin{array}{l l}
a_1&\text{if}~ a_1\geq\frac{3}{4}\sum_{i=1}^2a_i\\
&~\text{and}~ a_1\geq\frac{9}{16}\sum_{i=1}^{\min\{k,8\}}a_i,\\
%%%%%%%%
\frac{3}{4}\sum_{i=1}^2a_i&\text{if}~ a_1\leq\frac{3}{4}\sum_{i=1}^2a_i\\
&~\text{and}~ \frac{3}{4}\sum_{i=1}^2a_i\geq\frac{9}{16}\sum_{i=1}^{\min\{k,8\}}a_i,\\
%%%%%%%%
\frac{9}{16}\sum_{i=1}^{\min\{k,8\}}a_i&\text{if}~ a_1\leq 	\frac{9}{16}\sum_{i=1}^{\min\{k,8\}}a_i\\
&~\text{and}~ \frac{3}{4}\sum_{i=1}^2a_i\leq\frac{9}{16}\sum_{i=1}^{\min\{k,8\}}a_i,
\end{array}
\right.,
\end{align}
in particular,
\begin{align*}
\|\sum_{i=1}^ke_{t(i)}\|=\left\{
\begin{array}{ll}
\frac{3}{2}&\text{if}~k=2\\
\frac{9k}{16}&\text{if}~2 <k<8\\
\frac{9}{2}&\text{if}~8\geq k
\end{array}
\right..
\end{align*}

We will apply Theorem \ref{BrunelSuchestonBBarreras} to obtain two block asymptotic models associated to $([\N]^8,\ldots,$ $[\N]^8,\ldots)$ and $([\N]^2,[\N]^2,[\N]^8,\ldots,[\N]^8,\ldots)$, respectively, but first we calculate suitable norms which are important for final results.

\medskip

We recall the notation $\mathcal{E}(s)=\frac{\sum_{i\in s}e_i}{\|\sum_{i\in s}e_i\|}$ for all $s\in FIN^*$. 

\medskip

First of all, we will estimate a lower bound and an upper bound of $\|\sum_{i=1}^ka_i\mathcal{E}(s_i)\|$ for all $k\in\N$, $S=\{s_1,\ldots,s_k\}\in Bl([M]^2,[M]^2,[M]^8,\ldots,[M]^8)$ and $(a_i)_{i=1}^k\in[0,1]^k$. We will proceed increasingly, starting with the case $k=2$ and in all instances we will use the formula  (\ref{Norma28}):

\medskip

%-----------------------------------------------------------
% Bloques 2,2,8,8,...
%-----------------------------------------------------------

For every $S=\{s_1,s_2\}\in Bl^2([\N]^2)$ and $(a_i)_{i=1}^2\in[0,1]^2$ with $a_1\geq a_2$ we have
\begin{flalign*}
\|a_1\mathcal{E}(s_1)+a_2\mathcal{E}(s_2)\|
&=\|a_1\left(\frac{\sum_{j=1}^2e_{s_1(j)}}{\|\sum_{j=1}^2e_{s_1(j)}\|}\right)+ a_2\left(\frac{\sum_{j=1}^2e_{s_2(j)}}{\|\sum_{j=1}^2e_{s_2(j)}\|}\right)\|\notag\\
&=\frac{2}{3}\|\sum_{j=1}^2a_1e_{s_1(j)}+ \sum_{j=1}^2a_2e_{s_2(j)}\|\notag\\
&=\frac{2}{3}\left\{
\begin{array}{ll}
\frac{3}{2}a_1&\text{if}~\frac{3}{2}a_1\geq \frac{9}{8}(a_1+a_2)\notag\\
\frac{9}{8}(a_1+a_2)&\text{if}~\frac{3}{2}a_1< \frac{9}{8}(a_1+a_2)
\end{array}
\right.\notag\\
&=\left\{
\begin{array}{ll}
a_1&\text{if}~\frac{3}{2}a_1\geq \frac{9}{8}(a_1+a_2)\\
\frac{3}{4}(a_1+a_2)&\text{if}~\frac{3}{2}a_1< \frac{9}{8}(a_1+a_2)
\end{array}
\right..
\end{flalign*}
By property \ref{ordered} we get the general formula
\begin{align}\label{normaB228-2} 
\|a_1\mathcal{E}(s_1)+a_2\mathcal{E}(s_2)\|=\left\{
\begin{array}{ll}
\max\{a_1,a_2\}&\text{if}~\frac{3}{2}\max\{a_1,a_2\}\geq \frac{9}{8}(a_1+a_2)\\
\frac{3}{4}(a_1+a_2)&\text{if}~\frac{3}{2}\max\{a_1,a_2\}< \frac{9}{8}(a_1+a_2)
\end{array}
\right.
\end{align}
for all $S=\{s_1,s_2\}\in Bl^2([\N]^2)$ and $(a_i)_{i=1}^2\in[0,1]^2$.
Also, by the above equation, we deduce that for all $S\in Bl^2([\N]^2)$ and $(a_i)_{i=1}^2\in[0,1]^2$
\begin{align}\label{desk=2}
\max\{a_1,a_2\}\leq\|a_1\mathcal{E}(s_1)+a_2\mathcal{E}(s_2)\|\leq\frac{3}{2}\max\{a_1,a_2\}.
\end{align}

\medskip

%------------ 3 -----
Now, for every $S=\{s_1,s_2,s_3\}\in Bl([\N]^2,[\N]^2,[\N]^8)$ and for every $(a_i)_{i=1}^3\in[0,1]^3$ we obtain
\begin{flalign}
\|\sum_{i=1}^3a_i\mathcal{E}(s_i)\|
&=\|a_1\left(\frac{\sum_{j=1}^2e_{s_1(j)}}{\|\sum_{j=1}^2e_{s_1(j)}\|}\right)+ a_2\left(\frac{\sum_{j=1}^2e_{s_2(j)}}{\|\sum_{j=1}^2e_{s_2(j)}\|}\right)+a_3\left(\frac{\sum_{j=1}^8e_{s_3(j)}}{\|\sum_{j=1}^8e_{s_3(j)}\|}\right)\|\notag\\
&=\frac{2}{3}\|\sum_{j=1}^2a_1e_{s_1(j)}+ \sum_{j=1}^2a_2e_{s_2(j)}+\sum_{j  =1}^8\frac{1}{3}a_3e_{s_3(j)}\|
\notag\\
&=\frac{2}{3}\left\{
\begin{array}{ll}
\frac{3}{2}a_1&\text{if}~a_1\geq a_2\geq\frac{1}{3}a_3~\text{and}~\frac{3}{2}a_1\geq \frac{9}{8}(a_1+a_2+\frac{2}{3}a_3)\\
\frac{9}{8}(a_1+a_2+\frac{2}{3}a_3)&\text{if}~a_1\geq a_2\geq\frac{1}{3}a_3~\text{and}~\frac{3}{2}a_1< \frac{9}{8}(a_1+a_2+\frac{2}{3}a_3)\\
\frac{3}{2}a_1&\text{if}~a_1\geq\frac{1}{3}a_3\geq a_2~\text{and}~\frac{3}{2}a_1\geq \frac{9}{8}(a_1+a_3)\\
\frac{9}{8}(a_1+a_3)&\text{if}~a_1\geq\frac{1}{3}a_3\geq a_2~\text{and}~\frac{3}{2}a_1<\frac{9}{8}(a_1+a_3)\\
\frac{3}{2}a_2&\text{if}~a_2\geq a_1\geq\frac{1}{3}a_3~\text{and}~\frac{3}{2}a_2\geq \frac{9}{8}(a_1+a_2+\frac{2}{3}a_3)\\
\frac{9}{8}(a_1+a_2+\frac{2}{3}a_3)&\text{if}~a_2\geq a_1\geq\frac{1}{3}a_3~\text{and}~\frac{3}{2}a_2< \frac{9}{8}(a_1+a_2+\frac{2}{3}a_3)\\
\frac{3}{2}a_2&\text{if}~a_2\geq\frac{1}{3}a_3\geq a_1~\text{and}~\frac{3}{2}a_2\geq \frac{9}{8}(a_2+a_3)\\
\frac{9}{8}(a_2+a_3)&\text{if}~a_2\geq\frac{1}{3}a_3\geq a_1~\text{and}~\frac{3}{2}a_2<\frac{9}{8}(a_2+a_3)\\
\frac{3}{2}a_3&\text{if}~ \frac{1}{3}a_3\geq a_1,a_2
\end{array}
\right.\notag\\
%---------
&=~~\left\{
\begin{array}{ll}
a_1&\text{if}~a_1\geq a_2\geq\frac{1}{3}a_3~\text{and}~\frac{3}{2}a_1\geq \frac{9}{8}(a_1+a_2+\frac{2}{3}a_3)\\
\frac{3}{4}(a_1+a_2+\frac{2}{3}a_3)&\text{if}~a_1\geq a_2\geq\frac{1}{3}a_3~\text{and}~\frac{3}{2}a_1< \frac{9}{8}(a_1+a_2+\frac{2}{3}a_3)\\
a_1&\text{if}~a_1\geq\frac{1}{3}a_3\geq a_2~\text{and}~\frac{3}{2}a_1\geq \frac{9}{8}(a_1+a_3)\\
\frac{3}{4}(a_1+a_3)&\text{if}~a_1\geq\frac{1}{3}a_3\geq a_2~\text{and}~\frac{3}{2}a_1<\frac{9}{8}(a_1+a_3)\\
a_2&\text{if}~a_2\geq a_1\geq\frac{1}{3}a_3~\text{and}~\frac{3}{2}a_2\geq \frac{9}{8}(a_1+a_2+\frac{2}{3}a_3)\\
\frac{3}{4}(a_1+a_2+\frac{2}{3}a_3)&\text{if}~a_2\geq a_1\geq\frac{1}{3}a_3~\text{and}~\frac{3}{2}a_2< \frac{9}{8}(a_1+a_2+\frac{2}{3}a_3)\\
a_2&\text{if}~a_2\geq\frac{1}{3}a_3\geq a_1~\text{and}~\frac{3}{2}a_2\geq \frac{9}{8}(a_2+a_3)\\
\frac{3}{4}(a_2+a_3)&\text{if}~a_2\geq\frac{1}{3}a_3\geq a_1~\text{and}~\frac{3}{2}a_2<\frac{9}{8}(a_2+a_3)\\
a_3&\text{if}~ \frac{1}{3}a_3\geq a_1,a_2
\end{array}
\right..\label{normaB228-3}
\end{flalign}

\medskip

Next, we proceed to find a lower bound and a upper bound for the previous norm. To have this done, fix $S\in Bl([\N]^2,[\N]^2,[\N]^8)$ and $(a_i)_{i=1}^3\in[0,1]^3$, and analyze all possible orderings of the set $\{a_1,a_2,a_3\}$. Let us start with the two main cases and we split them in suitable subcases:

\medskip

Case I. We first suppose that $a_3=\max\{a_i:i\leq 3\}$. It follows from (\ref{normaB228-3}) that the only possible subcases are the following:

I$.1.$ For $a_k\geq a_{\ell}\geq\frac{1}{3}a_3$ and $\frac{3}{2}a_k< \frac{9}{8}(a_k+a_{\ell}+\frac{2}{3}a_3)$ where $k,\ell\in\{1,2\}$ and $k\neq \ell$, since
$a_3\geq a_1,a_2$ and $a_1,a_2\geq\frac{1}{3}a_3$ one gets
\begin{align*}
a_3=\frac{3}{4}(\frac{1}{3}a_3+\frac{1}{3}a_3+\frac{2}{3}a_3)\leq\|\sum_{i=1}^3a_i\mathcal{E}(s_i)\|=\frac{3}{4}(a_1+a_2+\frac{2}{3}a_3)\leq\frac{3}{4}(a_3+a_3+\frac{2}{3}a_3)=2a_3.
\end{align*}

I$.2.$ For $a_k\geq\frac{1}{3}a_3\geq a_{\ell}$ and $\frac{3}{2}a_k<\frac{9}{8}(a_k+a_3)$  where $k,\ell\in\{1,2\}$ and $k\neq \ell$, we get
\begin{align*}
a_3=\frac{3}{4}(\frac{1}{3}a_3+a_3)\leq\|\sum_{i=1}^3a_i\mathcal{E}(s_i)\|=\frac{3}{4}(a_k+a_3)\leq\frac{3}{4}(a_3+a_3)=\frac{3}{2}a_3.
\end{align*}

I$.3.$ For $\frac{1}{3}a_3\geq a_1,a_2$ we know that $\|\sum_{i=1}^3a_i\mathcal{E}(s_i)\|=a_3$.

\noindent Hence, we have
\begin{align*}
\max\{a_i:i\leq 3\}=a_3\leq\|\sum_{i=1}^3a_i\mathcal{E}(s_i)\|\leq 2a_3=2\max\{a_i:i\leq 3\}.
\end{align*}

Case II. We assume that $a_k=\max\{a_i:i\leq 3\}$ for some $k\in\{1,2\}$. We compute the norm of $\sum_{i=1}^3a_i\mathcal{E}(s_i)$ through the following subcases:

II$.1.$ If $a_k\geq a_{\ell}\geq\frac{1}{3}a_3$ and $\frac{3}{2}a_k\geq \frac{9}{8}(a_k+a_{\ell}+\frac{2}{3}a_3)$ with $\ell\in\{1,2\}$ and $k\neq \ell$, then
\begin{align*}
\|\sum_{i=1}^3a_i\mathcal{E}(s_i)\|=a_k.
\end{align*}

II$.2.$ For $a_k\geq a_{\ell}\geq\frac{1}{3}a_3$ and $\frac{3}{2}a_k< \frac{9}{8}(a_k+a_{\ell}+\frac{2}{3}a_3)$ with $\ell\in\{1,2\}$ and $k\neq \ell$, directly from the initial assumption one obtains
\begin{align*}
\|\sum_{i=1}^3a_i\mathcal{E}(s_i)\|=\frac{3}{4}(a_1+a_2+\frac{2}{3}a_3)\leq\frac{3}{4}(a_k+a_k+\frac{2}{3}a_k)=2a_k.
\end{align*}
From the inequality $\frac{3}{2}a_k<\frac{9}{8}(a_k+a_{\ell}+\frac{2}{3}a_3)$, it is possible to deduce $\frac{1}{3}a_k<a_{\ell}+\frac{2}{3}a_3$ and so
\begin{align*}
a_k=\frac{3}{4}(a_k+\frac{1}{3}a_k)\leq\frac{3}{4}(a_k+a_{\ell}+\frac{2}{3}a_3)=\|\sum_{i=1}^3a_i\mathcal{E}(s_i)\|.
\end{align*}

II$.3.$ If $a_k\geq\frac{1}{3}a_3\geq a_{\ell}$ and $\frac{3}{2}a_k\geq \frac{9}{8}(a_k+a_3)$ with $\ell\in\{1,2\}$ and $k\neq \ell$, then
\begin{align*}
\|\sum_{i=1}^3a_i\mathcal{E}(s_i)\|=a_k.
\end{align*}

II$.4.$ For $a_k\geq\frac{1}{3}a_3\geq a_{\ell}$ and $\frac{3}{2}a_k< \frac{9}{8}(a_k+a_3)$ with $\ell\in\{1,2\}$ and $k\neq \ell$, we find the inequality 
\begin{align*}
\|\sum_{i=1}^3a_i\mathcal{E}(s_i)\|=\frac{3}{4}(a_k+a_3)\leq\frac{3}{4}(a_k+a_k)=\frac{3}{2}a_k,
\end{align*}
since $a_k\geq a_3$. The condition $\frac{3}{2}a_k< \frac{9}{8}(a_k+a_3)$ implies that $\frac{1}{3}a_k<a_3$ and so
\begin{align*}
a_k=\frac{3}{4}(a_k+\frac{1}{3}a_k)\leq\frac{3}{4}(a_k+a_3)=\|\sum_{i=1}^3a_i\mathcal{E}(s_i)\|.
\end{align*}	
\noindent All the subcases of the second case lead us to the relationship
\begin{align*}
\max\{a_i:i\leq 3\}=a_k\leq\|\sum_{i=1}^3a_i\mathcal{E}(s_i)\|\leq 2a_k=2\max\{a_i:i\leq 3\}.
\end{align*}

After analyzing all possible instances, we obtain
\begin{align}\label{desk=3}
\max\{a_i:i\leq 3\}\leq\|\sum_{i=1}^3a_i\mathcal{E}(s_i)\|\leq2\max\{a_i:i\leq 3\}
\end{align}
for all $S\in Bl([\N]^2,[\N]^2,[\N]^8)$ and $(a_i)_{i=1}^3\in[0,1]^3$.

\medskip

%------------ 4 -----
Now, we will get an inequality similar to the previous one for each element of $Bl([\N]^2,[\N]^2,$ $[\N]^8, [\N]^8)$. Fix $S=\{s_1,s_2,s_3,s_4\}\in Bl([\N]^2,[\N]^2,[\N]^8,[\N]^8)$ and observe that
\begin{flalign*}
\|\sum_{i=1}^4a_i\mathcal{E}(s_i)\|&=\|\sum_{i=1}^2a_i\left(\frac{\sum_{j=1}^2e_{s_i(j)}}{\|\sum_{j=1}^2e_{s_i(j)}\|}\right)+\sum_{i=3}^4a_i\left(\frac{\sum_{j=1}^8e_{s_i(j)}}{\|\sum_{j=1}^8e_{s_i(j)}\|}\right)\|\\
&=\frac{2}{3}\|\sum_{j=1}^2a_1e_{s_1(j)}+\sum_{j=1}^2a_2e_{s_2(j)}+\sum_{j  =1}^8\frac{1}{3}a_3e_{s_3(j)}+\sum_{j=1}^8\frac{1}{3}a_4e_{s_4(j)})\|
\end{flalign*}
for every $(a_i)_{i=1}^4\in[0,1]^4$. To continue we will analyze the norm
\begin{align*}
\|\sum_{j=1}^2a_1e_{s_1(j)}+\sum_{j=1}^2a_2e_{s_2(j)}+\sum_{j  =1}^8\frac{1}{3}a_3e_{s_3(j)}+\sum_{j=1}^8\frac{1}{3}a_4e_{s_4(j)}\|
\end{align*}
for all $(a_i)_{i=1}^4\in[0,1]^4$. In order to do this, we consider two cases for each $(a_i)_{i=1}^4\in[0,1]^4$:

Case I. Assume $a_4\geq a_3$ and fix $s\in[\N]^{\ell}$ for some $\ell\in\{1,2,8\}$. Notice that if $s\cap s_3=\emptyset$, then
\begin{align*}
\frac{\ell+1}{2\ell}\Big(\sum_{j=1}^2|a_1|\chi_{s}(s_1(j))&+\sum_{j=1}^2|a_2|\chi_{s}(s_2(j))+\sum_{j=1}^8\frac{1}{3}|a_3|\chi_{s}(s_3(j))+\sum_{j=1}^8\frac{1}{3}|a_4|\chi_{s}(s_4(j))\Big)\\
&=\frac{\ell+1}{2\ell}\Big(\sum_{j=1}^2|a_1|\chi_{s}(s_1(j))+\sum_{j=1}^2|a_2|\chi_{s}(s_3(j))+\sum_{j=1}^8\frac{1}{3}|a_4|\chi_{s}(s_4(j))\Big)\\
&\leq \|\sum_{j=1}^2a_1e_{s_1(j)}+\sum_{j=1}^2a_2e_{s_2(j)}+\sum_{j  =1}^8\frac{1}{3}a_4e_{s_4(j)}\|.
\end{align*}
When $s\cap s_3\neq\emptyset$, we consider $t=(s\setminus s_3)\cup u$ for some $u\subseteq s_4\backslash s$ with $|s\cap s_3|$ so that
\begin{align*}
&\frac{\ell+1}{2\ell}\Big(\sum_{j=1}^2|a_1|\chi_{s}(s_1(j))+\sum_{j=1}^2|a_2|\chi_{s}(s_2(j))+\sum_{j=1}^8\frac{1}{3}|a_3|\chi_{s}(s_3(j))+\sum_{j=1}^8\frac{1}{3}|a_4|\chi_{s}(s_4(j))\Big)
\\
&\leq\frac{\ell+1}{2\ell}\Big(\sum_{j=1}^2|a_1|\chi_{t}(s_1(j))+\sum_{j=1}^2|a_2|\chi_{t}(s_2(j))+\sum_{j=1}^8\frac{1}{3}|a_3|\chi_{t}(s_3(j))+\sum_{j=1}^8\frac{1}{3}|a_4|\chi_{t}(s_4(j))\Big)
\\
&=\frac{\ell+1}{2\ell}\Big(\sum_{j=1}^2|a_1|\chi_{t}(s_1(j))+\sum_{j=1}^2|a_2|\chi_{t}(s_2(j))+\sum_{j=1}^8\frac{1}{3}|a_4|\chi_{t}(s_4(j))\Big)
\\
&\leq \|\sum_{j=1}^2a_1e_{s_1(j)}+\sum_{j=1}^2a_2e_{s_2(j)}+\sum_{j  =1}^8\frac{1}{3}a_4e_{s_4(j)}\|.
\end{align*}
From this it follows that
\begin{align*}
\|\sum_{j=1}^2a_1e_{s_1(j)}+&\sum_{j=1}^2a_2e_{s_2(j)}+\sum_{j=1}^8\frac{1}{3}a_3e_{s_3(j)}+\sum_{j=1}^8\frac{1}{3}a_4e_{s_4(j)}\|\\
&\leq\|\sum_{j=1}^2a_1e_{s_1(j)}+\sum_{j=1}^2a_2e_{s_2(j)}+\sum_{j=1}^8\frac{1}{3}a_4e_{s_4(j)}\|.
\end{align*}
Since $(e_i)_{i\in\N}$ is a Schauder basis for $(X,\|\cdot\|)$ with basis constant 1 and a spreading sequence, if we take $s\in[\N]^8$ with $s>s_4$ then
\begin{align*}
\|\sum_{j=1}^2a_1e_{s_1(j)}+&\sum_{j=1}^2a_2e_{s_2(j)}+\sum_{j=1}^8\frac{1}{3}a_3e_{s_3(j)}+\sum_{j=1}^8\frac{1}{3}a_4e_{s_4(j)}\|\\&=
\|\sum_{j=1}^2a_1e_{s_1(j)}+\sum_{j=1}^2a_2e_{s_2(j)}+\sum_{j=1}^8\frac{1}{3}a_4e_{s_4(j)}+\sum_{j=1}^8\frac{1}{3}a_3e_{s(j)}\|
\\
&\geq\|\sum_{j=1}^2a_1e_{s_1(j)}+\sum_{j=1}^2a_2e_{s_2(j)}+\sum_{j=1}^8\frac{1}{3}a_4e_{s_4(j)}\|.
\end{align*}
Hence
\begin{align*}
\|\sum_{j=1}^2a_1e_{s_1(j)}&+ \sum_{j=1}^2a_2e_{s_2(j)}+\sum_{j  =1}^8\frac{1}{3}a_3e_{s_3(j)})+\sum_{j  =1}^8\frac{1}{3}a_4e_{s_4(j)}\|\\
&=\|\sum_{j=1}^2a_1e_{s_1(j)}+ \sum_{j=1}^2a_2e_{s_2(j)}+\sum_{j  =1}^8\frac{1}{3}a_3e_{s_3(j)}\|.
\end{align*}
\\
Case II. For the case $a_3\geq a_4$, proceeding as in the previous case, we get that
\begin{align*}
\|\sum_{j=1}^2a_1e_{s_1(j)}&+ \sum_{j=1}^2a_2e_{s_2(j)}+\sum_{j  =1}^8\frac{1}{3}a_3e_{s_3(j)}+\sum_{j  =1}^8\frac{1}{3}a_4e_{s_4(j)}\|\\
&=\|\sum_{j=1}^2a_1e_{s_1(j)}+ \sum_{j=1}^2a_2e_{s_2(j)}+\sum_{j  =1}^8\frac{1}{3}a_3e_{s_3(j)}\|.
\end{align*}

Therefore, for each $S\in Bl([\N]^2,[\N]^2,[\N]^8,[\N]^8)$ and $(a_i)_{i=1}^4\in[0,1]^4$ we have
\begin{flalign}\label{normaB2288}
\|\sum_{i=1}^4a_i\mathcal{E}(s_i)\|=\left\{
\begin{array}{ll}
\|a_1\mathcal{E}(s_1)+a_2\mathcal{E}(s_2)+a_3\mathcal{E}(s_3)\|&\text{if}~a_3\geq a_4\\
\|a_1\mathcal{E}(s_1)+a_2\mathcal{E}(s_2)+a_4\mathcal{E}(s_4)\|&\text{if}~a_4\geq a_3  
\end{array}
\right..
\end{flalign}

From the previous identity and (\ref{desk=3}) it follows that for every $S\in Bl([\N]^2,[\N]^2,$ $[\N]^8,[\N]^8)$ and $(a_i)_{i=1}^4\in[0,1]^4$
\begin{align}\label{desk=4}
\max\{a_i:i\leq 4\}\leq  \|\sum_{i=1}^4a_i\mathcal{E}(s_i)\|\leq2 \max\{a_i:i\leq 4\}.
\end{align}

\medskip

%------------ k -----
Modifying the procedure of the case $k=4$  is possible to obtain some characteristics about the norms of suitable vectors:

\noindent For each $k\in\N/3$ and $S=\{s_1,\ldots,s_k\}\in Bl([\N]^2,[\N]^2,[\N]^8,\ldots,[\N]^8)$ we have
\begin{flalign}\label{normaB2288...}
\|\sum_{i=1}^ka_i\mathcal{E}(s_i)\|
=\left\{
\begin{array}{ll}
\|a_1\mathcal{E}(s_1)+a_2\mathcal{E}(s_2)+a_3\mathcal{E}(s_3)\|&\text{if}~a_3=\max\{a_j:~ 3\leq j\leq k\}\\
\|a_1\mathcal{E}(s_1)+a_2\mathcal{E}(s_2)+a_4\mathcal{E}(s_4)\|&\text{if}~a_4=\max\{a_j:~ 3\leq j\leq k\}
\\
&\vdots\\
\|a_1\mathcal{E}(s_1)+a_2\mathcal{E}(s_2)+a_k\mathcal{E}(s_k)\|&\text{if}~ a_k=\max\{a_j:~ 3\leq j\leq k\}
\end{array}
\right.,
\end{flalign}
and so, by (\ref{desk=3}), we obtain
\begin{align}\label{desk}
\max\{a_i:i\leq k\}\leq  \|\sum_{i=1}^ka_i\mathcal{E}(s_i)\|\leq2 \max\{a_i:i\leq k\},
\end{align}
for all $(a_i)_{i=1}^k\in[0,1]^k$.

\addvspace{\medskipamount}

%--------------------------------------------------------------
% Bloques 8,8,8,....
%--------------------------------------------------------------

Taking in account (\ref{Norma28}), for every $k\in\N$, $T=\{t_1,\ldots,t_k\}\in Bl^k([\N]^8)$ and $(a_i)_{i=1}^k\in[0,1]^k$ with $a_1\geq a_2\geq\cdots\geq a_k$ we get
\begin{flalign*}
\|\sum_{i=1}^ka_i\mathcal{E}(t_i)\|
&=\|\sum_{i=1}^ka_i\left(\frac{\sum_{j=1}^8e_{t_i(j)}}{\|\sum_{j=1}^8e_{t_i(j)}\|}\right)\|
=\frac{2}{9}\|\sum_{i=1}^k\sum_{j=1}^8a_ie_{t_i(j)}\|\notag\\
&=\frac{2}{9}\left(\frac{9}{2}a_1\right)=a_1.
\end{flalign*}
Hence, by property \ref{ordered}, we deduce the equality
\begin{align}\label{normaB8}
\|\sum_{i=1}^ka_i\mathcal{E}(t_i)\|=\max\{a_i:i\leq k\}
\end{align}
for all $k\in\N$, $T=\{t_1,\ldots,t_k\}\in Bl^k([\N]^8)$ and $(a_i)_{i=1}^k\in[0,1]^k$.

\medskip

Applying Theorem \ref{BrunelSuchestonBBarreras} twice, the first time to $(e_i)_{i\in\N}$ and $([\N]^8,\ldots,[\N]^8,\ldots)$ to obtain $N\in[\N]^{\infty}$, and the second time to $(e_i)_{i\in N}$ and $([N]^2,[N]^2,[N]^8,\ldots,[N]^8,\ldots)$ to obtain $M\in[N]^{\omega}$ such that the sequence $(e_i)_{i\in \N}$ is a $([M]^8,\ldots,[M]^8,\ldots)-$block asymptotic model and a $([M]^2,[M]^2,[M]^8,\ldots,[M]^8,\ldots)-$block asymptotic model of $(e_i)_{\in M}$ under the norms $\|\cdot\|_{8}$ and $\|\cdot\|_{2,2,8}$, respectively, which are defined by 
\begin{align*}
\|\sum_{i=1}^ka_ie_i\|_{8}&=\lim\limits_{ Bl^k([M]^8)\ni \{s_1,\ldots,s_k\}\to\infty}\|\sum_{i=1}^ka_i\mathcal{E}(s_i)\|\\
\|\sum_{i=1}^ka_ie_i\|_{2,2,8}&=\lim\limits_{\ Bl([M]^2,[M]^2,[M]^8,\ldots,[M]^8)\ni\{s_1,\ldots,s_k\}\to\infty}\|\sum_{i=1}^ka_i\mathcal{E}(s_i)\| 
\end{align*}
for all $k\in\N$ and $(a_i)_{i=1}^k\in[-1,1]^k$.
By (\ref{normaB228-2}), (\ref{normaB228-3}), (\ref{normaB2288}), (\ref{normaB2288...}), (\ref{normaB8}) and the definitions of the norms we have
\begin{align}
\|\sum_{i=1}^ka_ie_i\|_{8}=\max\{a_i:i\leq k\}\label{norma8}
\end{align}
and
\begin{align}
&\|\sum_{i=1}^ka_ie_i\|_{2,2,8}=\left\{
\begin{array}{ll}
a_1&\text{if}~a_1\geq a_2\geq\frac{1}{3}a_{\ell}~\text{and}~\frac{3}{2}a_1\geq \frac{9}{8}(a_1+a_2+\frac{2}{3}a_{\ell})\\
\frac{3}{4}(a_1+a_2+\frac{2}{3}a_{\ell})&\text{if}~a_1\geq a_2\geq\frac{1}{3}a_{\ell}~\text{and}~\frac{3}{2}a_1< \frac{9}{8}(a_1+a_2+\frac{2}{3}a_{\ell})\\
a_1&\text{if}~a_1\geq\frac{1}{3}a_{\ell}\geq a_2~\text{and}~\frac{3}{2}a_1\geq \frac{9}{8}(a_1+a_{\ell})\\
\frac{3}{4}(a_1+a_{\ell})&\text{if}~a_1\geq\frac{1}{3}a_{\ell}\geq a_2~\text{and}~\frac{3}{2}a_1<\frac{9}{8}(a_1+a_{\ell})\\
a_2&\text{if}~a_2\geq a_1\geq\frac{1}{3}a_{\ell}~\text{and}~\frac{3}{2}a_2\geq \frac{9}{8}(a_1+a_2+\frac{2}{3}a_{\ell})\\
\frac{3}{4}(a_1+a_2+\frac{2}{3}a_{\ell})&\text{if}~a_2\geq a_1\geq\frac{1}{3}a_{\ell}~\text{and}~\frac{3}{2}a_2< \frac{9}{8}(a_1+a_2+\frac{2}{3}a_{\ell})\\
a_2&\text{if}~a_2\geq\frac{1}{3}a_{\ell}\geq a_1~\text{and}~\frac{3}{2}a_2\geq \frac{9}{8}(a_2+a_{\ell})\\
\frac{3}{4}(a_2+a_{\ell})&\text{if}~a_2\geq\frac{1}{3}a_{\ell}\geq a_1~\text{and}~\frac{3}{2}a_2<\frac{9}{8}(a_2+a_{\ell})\\
a_{\ell}&\text{if}~ \frac{1}{3}a_{\ell}\geq a_1,a_2
\end{array}
\right.\label{norma228}\\
&\qquad\qquad\qquad\qquad\quad\text{where}~a_{\ell}=\max\{a_i:3\leq i\leq k\}~\text{when}~k\geq 3.\notag
\end{align}
for all $k\in\N$ and $(a_i)_{i=1}^k\in[-1,1]^k$.
Due to the inequalities (\ref{desk=2}), (\ref{desk=3}), (\ref{desk=4}) and (\ref{desk}), and the identities  (\ref{norma8}) and (\ref{norma228}) we also conclude
\begin{align*}
\|\sum_{i=1}^ka_ie_i\|_{8}\leq \|\sum_{i=1}^ka_ie_i\|_{2,2,8}\leq2 \|\sum_{i=1}^ka_ie_i\|_{8}
\end{align*}
for each $k\in\N$ and $(a_i)_{i=1}^k\in[-1,1]^k$. Therefore, the basic sequences  $((e_i)_{i\in\N},\|\cdot\|_8)$ and $((e_i)_{i\in\N},\|\cdot\|_{2,2,8})$ are equivalent.

\addvspace{\medskipamount}

We know through Lemma \ref{BModeloAsinDisperso} that $(e_i)_{i\in\N}$ is a spreading sequence under the norm $\|\cdot\|_{8}$. However, if we consider the vectors $e_1+e_2$ and $e_3+e_4$, then we get
\begin{align*}
\|e_1+e_2\|_{2,2,8}&=\lim\limits_{ Bl^2([M]^2,[M]^2)\ni\{s_1,s_2\}\to\infty}\|\mathcal{E}(s_1)+\mathcal{E}(s_2)\|\\
&=\lim\limits_{Bl^2([M]^2,[M]^2)\ni S\to\infty}\frac{3}{2}\\
&=\frac{3}{2}
\end{align*}
and
\begin{align*}
\|e_3+e_4\|_{2,2,8}&=\lim\limits_{ Bl([M]^2,[M]^2,[M]^8,[M]^8)\ni \{s_1,s_2,s_3,s_4\}\to\infty}\|\mathcal{E}(s_3)+\mathcal{E}(s_4)\|\\
&=\lim\limits_{Bl^2([M]^8,[M]^8)\ni\{t_1,t_2\}\to\infty}\|\mathcal{E}(t_1)+\mathcal{E}(t_2)\|\\
&=\lim\limits_{Bl^2([M]^2,[M]^2)\ni T\to\infty}\max\{1,1\}\\
&=1.
\end{align*}
This shows that that $(e_i)_{i\in\N}$ is not a spreading sequence under the norm $\|\cdot\|_{2,2,8}$. Moreover, we have that the norms $\|\cdot\|_{2,2,8}$ and $\|\cdot\|_{8}$ are distinct.

\medskip
In a general form, the reader will find interesting to study the asymptotic models generated by subsequences of $(e_i)_{i\in\N}$ in the completion of $c_{00}$ under the norm $\|\cdot\|_{(m,n)}$ with $m,n\in\N/1$, which is defined by
\begin{align*}
\|x\|_{(m,n)}=\sup&\Big(\big\{|a_i|:i\in\N\big\}\cup\big\{\frac{m+1}{2m}\sum_{i\in s}|a_i|:s\in[\N]^m\big\}\cup\big\{\frac{n+1}{2n}\sum_{i\in s}|a_i|: s\in[\N]^n\big\}\Big)
\end{align*}
for all $x=\sum_{i=1}^{\infty}a_ie_i\in c_{00}$.

\medskip

The example we have described in this section does not answer the question that whether or not all block asymptotic models are equivalent. Formally, this allows us to ask the following:

\begin{question} 
	Is possible to find two non-equivalent block asymptotic models of the same normalized basic sequence? 
\end{question}

From what we have seen in this section, one might apparently conjecture that an infinite sequence of barriers and a ``simple'' permutation of the same one produce equivalent block asymptotic models. Unfortunately we could not deny this conjecture in general, so we propose the following:

\begin{question} 
	Given two distinct barriers $\mathcal{B}_1$ and $\mathcal{B}_2$, is there some relationship between the  $(\mathcal{B}_1,\mathcal{B}_2,\ldots,\mathcal{B}_1,\mathcal{B}_2,\ldots)$-asymptotic model and the $(\mathcal{B}_2,\mathcal{B}_1,\ldots,\mathcal{B}_2,\mathcal{B}_1,\ldots)$-asymptotic model?
\end{question}

%%%%%%%%%%%%%%%%%%%%%%%%%%%	
 
%%%%%%%%%%%%%%%%%%%%%%%%%%%%
\end{document}